\newcommand{\version}{\today}
\theoremstyle{plain}
\newtheorem{thm}{THEOREM}[section]
\newtheorem{lm}[thm]{LEMMA}
\theoremstyle{definition}
\newtheorem{defi}[thm]{DEFINITION}
\theoremstyle{remark}
\newtheorem{remark}[thm]{REMARK}
\newtheorem{example}[thm]{EXAMPLE}
\newcommand{\upchi}{\raise1pt\hbox{$\chi$}}
\newcommand{\R}{{\mathord{\mathbb R}}}
\newcommand{\C}{{\mathord{\mathbb C}}}
\newcommand{\Z}{{\mathord{\mathbb Z}}}
\newcommand{\tr}{{\rm Tr}}
\numberwithin{equation}{section}
\newcommand{\un}{{\rm 1\kern -2.5pt l}}
 \newcommand{\rb}{\rho_{2}}
 \newcommand{\Dens}{{\mathfrak S}}
\begin{document}
\markboth{\scriptsize{CM \version}}{\scriptsize{CM September 2, 2016}}
\def\Z{{\mathbb Z}}
\def\R{{\mathbb R}}
\def\C{{\mathbb C}}
\def\sg{\sigma}
\def\S{\mathcal{S}}

\newcommand{\E}{{\mathcal E}}
\def\Ex{{\mathbb E}}


\renewcommand{\a}{\alpha}
\renewcommand{\b}{\beta}
\newcommand{\ga}{\gamma}
\newcommand{\Ga}{\Gamma}
\renewcommand{\d}{\delta}
\newcommand{\e}{\varepsilon}
\renewcommand{\l}{\lambda}
\newcommand{\om}{\omega}
\renewcommand{\O}{\Omega}
\newcommand{\Om}{\Omega}
\renewcommand{\th}{\theta}
\newcommand{\VV}{\mathbf{V}}
\newcommand{\UU}{\mathbf{U}}

\definecolor{jan}{rgb}{0.0,0.3,0.8}

\def\AAA{\color{blue}}   
\def\JJJ{\color{red}}
\def\MMM{\color{red}}
\def\EEE{\color{black}\normalsize}
\def\DDD{\color{magenta}\footnotesize}

\makeatletter
\DeclareRobustCommand\widecheck[1]{{\mathpalette\@widecheck{#1}}}
\def\@widecheck#1#2{%
    \setbox\z@\hbox{\m@th$#1#2$}%
    \setbox\tw@\hbox{\m@th$#1%
       \widehat{%
          \vrule\@width\z@\@height\ht\z@
          \vrule\@height\z@\@width\wd\z@}$}%
    \dp\tw@-\ht\z@
    \@tempdima\ht\z@ \advance\@tempdima2\ht\tw@ \divide\@tempdima\thr@@
    \setbox\tw@\hbox{%
       \raise\@tempdima\hbox{\scalebox{1}[-1]{\lower\@tempdima\box
\tw@}}}%
    {\ooalign{\box\tw@ \cr \box\z@}}}
\makeatother


\newcommand{\beq}{\begin{equation}}
\newcommand{\eeq}{\end{equation}}
\newcommand{\bal}{\begin{aligned}}
\newcommand{\eal}{\end{aligned}}
\newcommand{\ben}{\begin{enumerate}}
\newcommand{\beni} {\begin{enumerate}[(i)]}
\newcommand{\een}{\end{enumerate}}
\newcommand{\bit}{\begin{itemize}}
\newcommand{\eit}{\end{itemize}}
\newcommand{\beqw}{\begin{equation*}}
\newcommand{\eeqw}{\end{equation*}}
\newcommand{\bthm}{\begin{theorem}}
\newcommand{\ethm}{\end{theorem}}
\newcommand{\bpr}{\begin{proposition}}
\newcommand{\epr}{\end{proposition}}
\newcommand{\ble}{\begin{lemma}}
\newcommand{\ele}{\end{lemma}}
\newcommand{\blem}{\begin{lemma}}
\newcommand{\elem}{\end{lemma}}
\newcommand{\bpf}{\begin{proof}}
\newcommand{\epf}{\end{proof}}
\newcommand{\bex}{\begin{example}}
\newcommand{\eex}{\end{example}}
\newcommand{\bre}{\begin{example}}
\newcommand{\ere}{\end{example}}
\newcommand{\bma}{\begin{bmatrix}}
\newcommand{\ema}{\end{bmatrix}}
\newcommand\T{{\mathbb T}}

\newcommand{\ddt}{\frac{\mathrm{d}}{\mathrm{d}t}}
\newcommand{\ddtr}{\frac{\mathrm{d}^+}{\mathrm{d}t}}
\newcommand{\ddhr}{\frac{\mathrm{d}^+}{\mathrm{d}h}}
\newcommand{\ddtt}{\frac{\mathrm{d^2}}{\mathrm{d}t^2}}
\newcommand{\ddr}{\frac{\mathrm{d}}{\mathrm{d}r}}


\renewcommand{\aa}{{\boldsymbol\alpha}}
\newcommand{\Dom}{{\mathsf D}}
\newcommand{\wt}{\widetilde}
\newcommand{\one}{{{\bf 1}}}
\newcommand{\embed}{\hookrightarrow}
\newcommand{\Null}{\mathsf{N}}
\newcommand{\rank}{\mathrm{rank}\;}
\renewcommand{\Re}{{\rm{Re}}\;}
\renewcommand{\Im}{{\rm{Im}}\;}
\newcommand{\sgn}{{\rm sgn}}
\renewcommand{\hat}{\widehat}
\newcommand{\ot}{\otimes}
\newcommand{\hN}{\widehat\Num}
\newcommand{\G}{\Gamma}
\renewcommand{\phi}{\varphi}
\def\bh{{\bf h}}
\def\tand{\quad{\rm and}\quad}
\def\L{\mathcal{L}}\def\bk{\rangle\langle}
\newcommand{\cA}{{\mathord{\mathscr A}}}
\def\ib{\iota_\beta}
\def\H{\mathcal{H}}
\def\E{\mathcal{E}}
\def\Eb{\mathcal{E}_\beta}
\def\rb{\sigma_\beta}
\newcommand{\bx}{{\boldsymbol{x}}}
\newcommand{\cM}{\mathcal{M}}
\newcommand{\aA}{\mathcal{A}}
\newcommand{\aB}{\mathcal{B}}
\newcommand{\cG}{\mathcal{G}}
\newcommand{\cP}{{\mathord{\mathscr P}}}
\newcommand{\cQ}{{\mathord{\mathscr Q}}}
\newcommand{\cE}{{\mathcal{E}}}
\newcommand{\cL}{{\mathord{\mathscr L}}}
\newcommand{\cK}{{\mathord{\mathscr K}}}
\newcommand{\cB}{{\mathord{\mathscr B}}}
\newcommand{\cJ}{{\mathcal{J}}}
\newcommand{\cC}{{\mathcal{C}}}

\newcommand{\B}{{\mathcal{B}}}
\newcommand{\calS}{{\mathcal{S}}}
\newcommand{\F}{{\mathcal{F}}}
\newcommand{\fH}{{\mathfrak{H}}}
\newcommand{\bbA}{{\bf A}}
\newcommand{\Cln}{\mathfrak{C}^n}  
\newcommand{\err}{{r}}

\newcommand{\PX}{\cP(\cX)}
\newcommand{\PXs}{\cP_*(\cX)}
\newcommand{\hrhom}{\widehat{\rho_{-j}}}
\newcommand{\hrhop}{\widehat{\rho_{+j}}}
\newcommand{\hrhopm}{\widehat{\rho_{\pm j}}}
\newcommand{\hrhomp}{\widehat{\rho_{\mp j}}}
\newcommand{\lrho}{\check{\rho}}
\newcommand{\ulrho}{\breve{\rho}}
\newcommand{\ulV}{\breve{V}}
\newcommand*{\bdot}[1]{%
  \accentset{\mbox{\large\bfseries .}}{#1}}

\title
[Complete positivity and self-adjointness]
{Complete positivity and self-adjointness}

\author{\'Erik Amorim}
\address{Department of Mathematics\\ 
Hill Center\\
Rutgers University\\
110 Frelinghuysen Road\\
Piscataway\\
NJ 08854-8019\\
USA}
\email{erik.amorim@math.rutgers.edu}

\author{Eric A. Carlen}
\address{Department of Mathematics\\ 
Hill Center\\
Rutgers University\\
110 Frelinghuysen Road\\
Piscataway\\
NJ 08854-8019\\
USA}
\email{carlen@math.rutgers.edu}

\begin{abstract}
We specify the structure of  completely positive operators and quantum Markov semigroup generators that are symmetric with respect to a family of inner products, also providing new information on the order strucure an extreme points in some previously studied cases.

\end{abstract}

\maketitle

\medskip
\centerline{Keywords: quantum Markov semigroup, entropy, detailed balance.}

\centerline{Subject Classification Numbers: 47C15, 81S22}


\maketitle

\section{Introduction}  

\subsection{The setting and notation}

Let $\cM_N(\C)$ denote the set of $N\times N$ matrices over $\C$, $N\geq 2$. Let $\Dens_+$ denote the set of non-degenerate density matrices in $\cM_N(\C)$. That is, each $\sigma\in \Dens_+$ is a positive definite $N\times N$ matrix with unit trace. The GNS inner product on $\cM_N(\C)$ is defined by
\begin{equation}\label{GNSip}
\langle B,A\rangle_{GNS} = \tr[B^*A\sigma]\ .
\end{equation}
Here we are concerned with a family of inner products on $\cM_N(\C)$ that all reduce to $\langle B,A\rangle_{GNS}$ when either $A$ or $B$ commutes with $\sigma$.
Let $\mathcal{P}[0,1]$ denote the set of probability measures on the interval $[0,1]$.  
\begin{defi} For each  $m\in\mathcal{P}[0,1]$, $\langle \cdot,\cdot\rangle_m$ denotes the inner product on $\cM_N(\C)$ given by
\begin{equation}
    \langle B,A \rangle_m = \tr[B^\ast\mathcal{M}_m(A)] \quad \text{where} \quad \mathcal{M}_m(A) = \int_0^1 \sigma^{s} A \sigma^{1-s} \; \mathrm{d} m (s)  \ .
    \label{integral_M}
\end{equation}
Notice that for each $s\in [0,1]$,
$$
\tr[B^* \sigma^{1-s}A \sigma^s] =  \tr[ ( \sigma^{(1-s)/2}B \sigma^{s/2})^*  \sigma^{(1-s)/2}A \sigma^{s/2}]\ ,
$$
and this quantity is strictly positive when $B=A \neq0$, and hence $\langle\cdot,\cdot\rangle_m$ is a non-degenerate inner product for every $m$.

 We call the measure $m$ and its associated inner product $\langle\cdot,\cdot\rangle_m$ {\em even} when $m$ is symmetric with respect to reflection about $s=1/2$:
\begin{equation}
m(U) = m(1-U) \quad \text{for all measurable } U\subseteq [0,1] \ .
\end{equation}
\end{defi}

Note that the GNS inner product corresponds to $m = \delta_0$, the point mass at $s=0$. Other cases are known by name. Taking $m = \delta_{1/2}$ yields the {\em Kubo-Martin-Schwinger} (KMS) inner product
\begin{equation}\label{KMSip}
\langle B,A\rangle_{KMS} = \tr[B^*\sigma^{1/2}A\sigma^{1/2}]\ .
\end{equation}
Taking $m$ to be uniform on $[0,1]$ yields the {\em Bogoliubov-Kubo-Mori} (BKM) inner product,
\begin{equation}\label{BKMip}
\langle B,A\rangle_{BKM} = \int_0^1\tr[B^*\sigma^s A\sigma^{1-s}]{\rm d}s\ .
\end{equation}
Unlike the GNS inner product, these two are even. 

Finally, if $\sigma = N^{-1}\one$, the normalized identity, all choices of $m$ reduce to the normalized Hilbert-Schmidt inner product.  Throughout this paper, $\mathfrak{H}$ always denotes the Hilbert space $\cM_N(\C)$ equipped with this inner product,
\begin{equation}\label{frakH}
\langle B,A\rangle_{\mathfrak{H}} = \frac1N\tr[B^*A]\ .
\end{equation}
Let  $\mathcal{L}(\cM_N(\C))$ denote the linear operators on $\cM_N(\C)$, or, what is the same thing in this finite dimensional setting, on $\mathfrak{H}$.  Throughout this paper, a dagger is always used to denote the adjoint with respect to the inner product on $\mathfrak{H}$.  That is, for $\Phi\in \mathcal{L}(\cM_N(\C))$,  $\Phi^\dagger$ is defined by
\begin{equation}\label{dagg}
\langle \Phi^\dagger(B),A\rangle_{\mathfrak{H}} =  \langle B, \Phi(A)\rangle_{\mathfrak{H}} 
\end{equation}
for all $A,B$.   We can also make $\mathcal{L}(\cM_N(\C))$ into a Hilbert space by equipping it with the normalized Hilbert-Schmidt inner product. Throughout this paper, this  Hilbert space is denoted by $\widehat{\mathfrak{H}}$. The following formula for the inner product in 
$\widehat{\mathfrak{H}}$ is often useful. Let  $\{F_{i,j}\}_{1\leq i,j\leq N}$ be any  orthonormal basis for $\mathfrak{H}$. Then for $\Phi$ and $\Psi\in \widehat{\mathfrak{H}}$, 
\begin{equation}\label{dagg2}
\langle \Psi,\Phi\rangle_{\widehat{\mathfrak{H}}} = \frac{1}{N^2}\sum_{i,j=1}^N  \langle \Psi(F_{i,j}), \Phi(F_{i,j})\rangle_{\mathfrak{H}} \ .
\end{equation}

To each $\sigma\in \Dens_+$, there corresponds the {\em modular operator} $\Delta$ on $\mathfrak{H}$ defined by 
\begin{equation}\label{modop}
\Delta A = \sigma A \sigma^{-1}\ .
\end{equation}
Let $\{u_1,\dots,u_N\}$  be an orthonormal basis of $\C^N$ consisting of eigenvectors of $\sigma$, so that for $1 \leq j \leq N$, $\sigma u_j = \lambda_j u_j$.  For $1\leq i,j \leq N$, define 
$
E_{i,j} = \sqrt{N} | u_i \rangle \langle u_j|
$, so that $\{E_{i,j}\}_{1\leq i,j\leq N}$ is an orthonormal basis of $\fH$.  Then a simple computation shows that for each $i,j$,
\begin{equation}\label{modeig2}
\Delta E_{i,j} = \lambda_i\lambda_j^{-1} E_{i,j}\ .
\end{equation}
Thus $\Delta$ is diagonalized, with positive diagonal entries, by an orthonormal basis in $\mathfrak{H}$. It follows that $\Delta$ is a positive operator on $\mathfrak{H}$.   
Using the Spectral Theorem, we may then define $\Delta^{s}$ for all $s$. This provides another way to write the operator $\cM_m$ defined in  \eqref{integral_M}:
\begin{equation}\label{integral_M2}
\cM_m = \left( \int_0^1 \Delta^s {\rm d}m\right) R_\sigma
\end{equation}
where $R_\sigma$ denotes right multiplication by $\sigma$, also a positive operator on $\mathfrak{H}$ that commutes with $\Delta$, and hence with $\int_0^1 \Delta^s {\rm d}m$. 

Finally, the terms ``Hermitian'' and ``self-adjoint'' are often used interchangeably. Here, we must make a distinction: A linear operator  $\Phi$ on $\cM_N(\C)$    is {\em Hermitian} if and only if $\Phi(A^*) = (\Phi(A))^*$ for all $A\in \cM_N(\C)$.  This is the only sense in which we shall use this term.

\subsection{The problems considered}

It is assumed that the reader is familiar with the notion of {\em completey positive} (CP) maps that was introcuced by Stinespring \cite{Sti55}, and have been much-studied since then. A concise  account containing all that  is needed here can be found in the early chapters of \cite{Paul02} or in  \cite{Sto10}.  By a quantum Markov semigroup, we mean a semigroup $\{\cP_t\}_{t\geq 0}$ of linear operators on $\cM_N(\C)$  such that each $\cP_t$ is completely positive and satisfies $\cP_t(\one) = \one$. A map  $\Phi$ on 
$\cM_N(\C)$  is {\em unital} in case $\Phi(\one) = \one$. Thus, when $\{\cP_t\}_{t\geq 0}$ is a QMS, each $\cP_t$ is unital, but unital CP maps are of wider interest in mathematical physics, and in this context 
are often referred to as {\em quantum operations}. The generator of  a QMS   $\{\cP_t\}_{t\geq 0}$  is the operator $\cL$ defined by
$$\cL := \lim_{t\to0}\frac1t(\cP_t - I) \ .$$
Note that $\cL \one = 0$, and $\cP_t = e^{t\cL}$.

We are interested here in CP maps and quantum Markov semigroups (QMS) and their generators that are self-adjoint  with respect to the inner product $\langle\cdot, \cdot\rangle_m$ for some $m\in \mathcal{P}[0,1]$ and some 
$\sigma\in \Dens_+$.   The structure of the set of QMS generators that are self-adjoint with respect to the GNS and KMS inner products has been studied  by mathematical physicists, but apart from these cases, not much is known. 
 
\begin{defi} For $m\in \mathcal{P}[0,1]$,  let $\H_m$ denote the Hilbert space obtained by equipping  $\cM_N(\C)$ with the inner product $\langle \cdot,\cdot \rangle_m$.   Let $CP_m$ denote the set of CP maps 
$\Phi$ on $\cM_N(\C)$ that are self-adjoint on $\H_m$, and let $CP_m(\one)$ denote the subset of $CP_m$ consisting of unital maps. Finally, let $QMS_m$ denote the set of QMS  generators that are self-adjoint on $\H_{m}$.  
We write $CP$ to denote the set of all CP maps (without any particular self-adjointness requirement). Likewise, we write $CP(\one)$ to denote the set of all CP unital maps and $QMS$ to denote the set of all QMS generators.
\end{defi}

As is well-known, both $CP$ and $QMS$ are convex cones.  For $CP$ this is obvious.  If $\cL_1$ and $\cL_2$ are two  generators of completely positive semigroups, then
\begin{equation}\label{trotter}
e^{t(\cL_1 + \cL_2)} = \lim_{k\to\infty} (e^{(t/k)\cL_1}e^{(t/k)\cL_2})^k
\end{equation}
is completely positive,  and hence the set of generators of completely positive semigroups is closed under addition, and also evidently under multiplication 
by non-negative real numbers.  Since if $\cL_j\one = 0$ for $j=1,2$, then $(\cL_1+\cL_2)\one =0$, it follows easily that the set of all 
$QMS$ semigroup generators is a convex cone.  However, it is not a pointed cone: Let $H\in \cM_N(\C)$ be self-adjoint, and define $\cL(A) = i[H,A]$. 
Then $e^{t\cL}(A) = e^{itH}A e^{-it H}$ is a QMS, and evidently both $\cL$ and $-\cL$  are QMS generators.

By a theorem of Lindblad \cite{Lin76} and Gorini, Kossakowski and Sudarshan \cite{GKS76}, every QMS generator  $\cL$  on $\cM_N(\C)$ is of the form
\begin{equation}\label{lind1}
\cL(A) =  (G^*A + AG) + \Phi(A) 
\end{equation}
where $\Phi$ is completely positive. Since $\cL\one = 0$,   $G^* + G = -\Phi(\one)$. If we define $K := \frac{1}{2i}(G-G^*)$, then we can rewrite \eqref{lind1} as
\begin{equation}\label{lind2}
\cL(A) =  \Phi(A) - \frac12(\Phi(\one)A + A \Phi(\one)) - i[K,A]\ .
\end{equation}
However, $\Phi$ and $K$ are not uniquely determined by $\cL$, and it is possible for $\cL$ to be self-adjoint on $\H_m$ while $\Phi$ is not, and {\em vice-versa}. 
Hence the problem of determining the structure of $CP_m$ is not the same as the problem of determining the structure of $QMS_m$.   

There is a natural order relation on $CP$ (without any requirement of self-adjointness), that was investigated by Arveson in \cite{Arv69}. He worked 
in the general context of CP maps from a $C^*$ algebra $\cA$ into $\mathcal{B}(\H)$, the bounded operators on a Hilbert space $\H$.   If $\Phi$ and $\Psi$ are two such CP maps, 
we write $\Phi \geq  \Psi$ in case $\Phi - \Psi$ is CP.  An element $\Phi$ is called {\em extreme} in case whenever $\Psi\in CP$ and  $\Phi \geq t\Psi$ for some $t>0$,  then $\Psi$ is a multiple of 
$\Phi$.  Arveson also considered the set $CP(\one)$ of unital CP maps. An element  $\Phi\in CP(\one)$ is extreme if whenever for some $0 < t < 1$ and $\Psi_1,\Psi_2\in CP(\one)$, $\Phi = t\Psi_1+ (1-t)\Psi_2$,
$\Psi_1 = \Psi_2 =\Phi$. Equivalently, whenever $\Phi \geq t \Psi$ for some $0< t < 1$ and $\Psi\in CP(\one)$, then $\Psi = \Phi$. 

For all $\Phi\in CP$, Arveson's Radon-Nikodym Theorem gives an explicit description of the  set $\{\Psi\in CP\ :\ \Phi -\Psi \geq 0\}$. Using this, he proved a characterization of the 
extreme points of $CP(\one)$. Later, Choi \cite{Choi75} gave a simplified treatment of Arveson's result in the case of matrix algebras. 
Every CP map on $M_N(\C)$ has a Kraus representation \cite{K71}:  There is a set $\{V_1,\dots,V_M\}\subset \cM_N(\C)$ such that for all $A \in \cM_N(\C)$,
\begin{equation}\label{kraus1}\Phi(A) = \sum_{j=1}^M V_j^* A V_j\ ,
\end{equation}
and one may always take such a representation in which $\{V_1,\dots,V_M\}$ is linearly independent; such a Kraus representation is {\em minimal}. There is a close relation between minimal 
Kraus representations and minimal Stinespring representations that is recalled in an appendix.
If \eqref{kraus1} is {\em any} Kraus representation of  $\Phi$, then $\Phi$  is unital if and only if $\sum_{j=1}^M V_j^*V_j = \one$, and Choi's matricial version \cite{Choi75} of 
Arveson's theorem on extremality is that if \eqref{kraus1} is a minimal Kraus representation, $\Phi$ is extremal in $CP(\one)$ if and only if $\{V_i^*V_j\ :\ 1\leq i,j \leq M\}$ is linearly independent. 

While necessary and sufficient conditions for a QMS generator to be self-adjoint for the GNS or KMS inner products have been proved by Alicki \cite{A78} and Fagnola an Umanita \cite{FU07},  
little appears to be known about the order structure and extreme points  for $CP_m$, $CP_m(\one)$ and $QMS_m$. Even for the case of the GNS or KMS inner products, there is more to say.

For the KMS inner product, let $CP_{KMS} := CP_{m_{1/2}}$. We prove the following results:

\begin{thm}\label{Main2A}  Let $\mathfrak{K}$ be the real vector space consisting of all $V\in \cM_N(\C)$ such that $\Delta^{-1/2} V = V^*$.  The extremal elements $\Phi$  of $CP_{KMS}$ are precisely the elements of the form 
\begin{equation}\label{KMSCPA}
\Phi(A) = V^*AV\ , \qquad V\in \mathfrak{K}\ .
\end{equation}
Every map in $CP_{KMS}$ is a positive \ linear combination of at most $N^2$ such maps.
\end{thm}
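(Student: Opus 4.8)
The first step is to identify, for a general CP map $\Phi$, exactly what self-adjointness on $\H_{KMS}$ means in terms of a Kraus representation. Since $\langle B,A\rangle_{KMS} = \tr[B^*\sigma^{1/2}A\sigma^{1/2}] = \langle B, R_{\sigma^{1/2}}L_{\sigma^{1/2}}A\rangle_{\mathfrak H}\cdot N$ (up to the normalization constant, which is harmless), and since $R_{\sigma^{1/2}}L_{\sigma^{1/2}} = R_\sigma^{1/2}L_\sigma^{1/2} = \Delta^{1/2}R_\sigma$ using \eqref{integral_M2}, the KMS self-adjointness condition $\langle B,\Phi(A)\rangle_{KMS} = \langle \Phi(B),A\rangle_{KMS}$ translates into an operator identity relating $\Phi$ to its $\dagger$-adjoint conjugated by $\Delta^{1/2}R_\sigma$. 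Writing $\Phi(A) = \sum_j V_j^* A V_j$, one computes $\Phi^\dagger(A) = \sum_j V_j A V_j^*$, and the condition becomes $\sum_j \sigma^{1/2}V_j^* A V_j \sigma^{1/2} = \sum_j \sigma^{1/2} V_j A V_j^* \sigma^{1/2}$ for all $A$, i.e. the two operator families $\{\sigma^{1/4}\otimes(V_j\sigma^{1/4})\}$ and their "transposes" generate the same map $A\mapsto \sum V_j^*AV_j$ versus $A\mapsto\sum V_jAV_j^*$ after the $\sigma^{1/2}$ sandwich. The clean way to package this: the map $\Phi$ is KMS-self-adjoint iff the map $\widetilde\Phi(A) := \sigma^{1/4}\Phi(\sigma^{-1/4}A\sigma^{-1/4})\sigma^{1/4}$, which is again CP, is invariant under the involution $\Psi \mapsto \Psi^\flat$ where $\Psi^\flat(A) = \Psi(A^*)^*$ composed with the transpose — more precisely, one shows $\Phi\in CP_{KMS}$ iff there is a Kraus representation $\Phi(A) = \sum_j W_j^* A W_j$ with each $W_j$ satisfying $\Delta^{-1/2}W_j = W_j^*$, equivalently $\sigma^{-1/2}W_j\sigma^{1/2}$... no: $\Delta^{-1/2}W = \sigma^{-1/2}W\sigma^{1/2} = W^*$. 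So the set $\mathfrak K$ in the statement is exactly the set of "KMS-symmetric Kraus operators".

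The second, and I expect main, step is to prove that if $\Phi\in CP_{KMS}$ then $\Phi$ admits a Kraus representation with \emph{all} operators drawn from $\mathfrak K$. The natural route: take any minimal Kraus representation $\Phi(A)=\sum_{j=1}^M V_j^*AV_j$, and use the KMS-self-adjointness to show that the $\C$-linear span $\mathcal V = \mathrm{span}_\C\{V_1,\dots,V_M\}$ is invariant under the conjugate-linear involution $\tau(V) := \Delta^{1/2}V^* = \sigma^{1/2}V^*\sigma^{-1/2}$ (note $\tau^2 = \id$ since $\Delta^{1/2}(\Delta^{1/2}V^*)^* = \Delta^{1/2}\Delta^{-1/2}V = V$). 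Indeed, the uniqueness clause in Choi's theorem — two minimal Kraus representations differ by a unitary on the multiplicity space — combined with the identity $\Phi(A) = \Phi^\dagger$-conjugate gives that $\{\tau(V_j)\}$ is another minimal Kraus family for $\Phi$, hence $\tau(V_j) = \sum_k u_{jk}V_k$ for a unitary $(u_{jk})$; so $\mathcal V$ is $\tau$-invariant. Then $\mathcal V$, as a complex vector space carrying an antilinear involution $\tau$ with $\tau^2=\id$, is the complexification of its real fixed-point space $\mathcal V^\tau = \mathcal V\cap\mathfrak K$; pick a real-linear basis $W_1,\dots,W_r$ of $\mathcal V^\tau$ (so $r = \dim_\C\mathcal V = M \le N^2$), which is then a complex-linear basis of $\mathcal V$. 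Writing $V_j = \sum_k c_{jk}W_k$ and substituting into $\Phi(A) = \sum_j V_j^*AV_j$ gives $\Phi(A) = \sum_{k,l}\big(\sum_j \bar c_{jk}c_{jl}\big)W_k^*AW_l = \sum_{k,l}G_{kl}W_k^*AW_l$ with $G = (G_{kl})$ a positive semidefinite $r\times r$ matrix (it is $C^*C$ where $C=(c_{jk})$). Diagonalizing $G = U^*DU$ and absorbing $U$ into new operators $\widehat W_p = \sum_k U_{pk}W_k$ — which still lie in $\mathfrak K$ since $\mathfrak K$ is only a \emph{real} vector space but $U$ need not be real...

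This is where care is needed, and it is the crux. The matrix $U$ diagonalizing $G$ is complex unitary, and $\mathfrak K$ is not closed under multiplication by complex scalars, so $\widehat W_p = \sum_k U_{pk}W_k$ need not be in $\mathfrak K$. The fix: I would instead observe that the bilinear form $(W,W')\mapsto \sum_j(\text{coeff})$ lives naturally over $\R$. Concretely, $\mathcal V^\tau$ is a real Hilbert space under $\langle W,W'\rangle_{\mathfrak H}$ (real-valued on $\mathfrak K$, since for $W,W'\in\mathfrak K$ one checks $\tr[W^*W']$ is real using $\tau$), and the map $\Phi\mapsto$ "its Gram data" defines a real-positive-semidefinite quadratic form on $\mathcal V^\tau$; diagonalizing that form by a \emph{real} orthogonal transformation (which preserves $\mathfrak K$) yields orthogonal $\widetilde W_p\in\mathfrak K$ with $\Phi(A) = \sum_p \widetilde W_p^* A\widetilde W_p$ after rescaling. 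Then $\Phi(A) = \sum_p \Phi_p(A)$ with $\Phi_p(A) = \widetilde W_p^*A\widetilde W_p$, each $\Phi_p\in CP_{KMS}$ (it is CP, and KMS-self-adjoint because $\widetilde W_p\in\mathfrak K$ — a direct two-line check using $\sigma^{1/2}\widetilde W_p^*\sigma^{1/2} = \sigma\widetilde W_p\cdot$... i.e. $\langle B,\Phi_p(A)\rangle_{KMS} = \tr[B^*\sigma^{1/2}\widetilde W_p^*A\widetilde W_p\sigma^{1/2}] = \tr[(\widetilde W_p\sigma^{1/2}B^*\sigma^{-1/2})^{*}\cdots]$, unwinding to $\langle\Phi_p(B),A\rangle_{KMS}$). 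Since $r = \dim_\C\mathcal V = M\le N^2$, we get at most $N^2$ summands, giving the final claim. For the extremality half (already in the theorem but worth noting the argument ties together): a single-operator map $V^*AV$ with $V\in\mathfrak K$ is extremal because in $CP_{KMS}$ any $\Psi$ with $V^*AV - t\Psi(A)\ge0$ (as CP maps) has, by Arveson/Choi Radon–Nikodym, Kraus operators in the span of $\{V\}$, which is one-dimensional, forcing $\Psi$ proportional to $V^*AV$; conversely any element with $\ge 2$ independent KMS-symmetric Kraus operators splits as above, so is not extremal.
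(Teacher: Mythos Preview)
Your approach is correct and genuinely different from the paper's. The paper works in the characteristic-matrix framework: it shows (Theorem~\ref{main1}) that $\Phi$ is KMS self-adjoint iff the modified matrix $B_{\Phi,KMS}$ (a diagonal rescaling of $C_\Phi$) commutes with the anti-unitary $U$ on $\C^{N^2}$ given by $(Uv)_\alpha=\overline{v_{\alpha'}}$, and then spectrally decomposes $B_{\Phi,KMS}$ into rank-one pieces $|u\rangle\langle u|$ with $Uu=\pm u$, each corresponding to a $V\in\mathfrak K$. Your route---minimal Kraus representation, $\tau$-invariance of the Kraus span via unitary equivalence of minimal Kraus families, complexification $\mathcal V=\mathcal V^\tau\oplus i\mathcal V^\tau$, then real diagonalization---is more intrinsic (no special basis of $\mathfrak H$) and arguably cleaner for this particular theorem; the paper's machinery, on the other hand, is what lets it handle general $m$ uniformly.

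There is one step you should tighten. Having written $\Phi(A)=\sum_{k,\ell}G_{k\ell}W_k^*AW_\ell$ with $W_k\in\mathfrak K$ a complex basis of $\mathcal V$, you need $G$ to be \emph{real} so that a real orthogonal diagonalization keeps you inside $\mathfrak K$. Your stated justification---that $\tr[W^*W']\in\R$ for $W,W'\in\mathfrak K$---is true but irrelevant; that is a fact about the Hilbert--Schmidt inner product, not about the coefficient matrix $G=C^*C$ built from the complex change-of-basis matrix $C=(c_{jk})$. The clean argument is to use KMS self-adjointness a second time: since $\tau(W_k)=W_k$, one computes
\[
\Phi^{*,KMS}(A)=\sum_j\bigl(\tau(V_j)\bigr)^*A\,\tau(V_j)=\sum_{k,\ell}\overline{G_{k\ell}}\,W_k^*AW_\ell,
\]
and the uniqueness of coefficients with respect to the linearly independent family $\{W_k\}$ (your Lemma~\ref{cpchar2} in the appendix, or Choi) then gives $G=\overline G$. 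With that in hand the rest of your sketch goes through.
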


\begin{thm}\label{RNlemA}
Let $\Phi$ and $\Psi$ be two  CP maps that are KMS self-adjoint.  Let $\Phi(A) =  \sum_{j=1}^M V_j^*A V_j$ be a minimal Kraus representation of $\Phi$ with each 
$V_j\in \mathfrak{K}$. Then $\Phi - \Psi$ is CP if and only if there exists  a real  $M\times M$ matrix $T$  such that $0 \leq T \leq \one$ and
\begin{equation}\label{baskrid3A}
 \Psi(A) = \sum_{i,j=1}^M T_{i,j} V_i^* A V_j\ .
\end{equation}
\end{thm}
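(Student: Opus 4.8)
The plan is to combine the classical Arveson--Choi Radon--Nikodym theorem with the special structure of Kraus operators lying in $\mathfrak{K}$. Recall Arveson's Radon--Nikodym theorem in Choi's matricial form: since $\Phi(A)=\sum_{j=1}^M V_j^* A V_j$ is a \emph{minimal} Kraus representation, the CP maps $\Psi$ with $\Phi-\Psi\in CP$ are exactly those of the form $\Psi(A)=\sum_{i,j=1}^M T_{i,j}V_i^* A V_j$ with $(T_{i,j})$ a complex positive semidefinite $M\times M$ matrix satisfying $0\le T\le\one$; moreover this correspondence $\Psi\leftrightarrow T$ is a bijection, so $T$ is uniquely determined by $\Psi$. (Equivalently, the $M^2$ operators $A\mapsto V_i^* A V_j$ are linearly independent in $\mathcal{L}(\cM_N(\C))$, which follows from the linear independence of $\{V_j\}$ by vectorizing and using that a tensor product of linearly independent families is linearly independent.) This already gives the implication ``$\Leftarrow$'': if $\Psi(A)=\sum_{i,j}T_{i,j}V_i^* A V_j$ with $T$ real and $0\le T\le\one$, then $T$ is in particular a complex positive semidefinite matrix with $0\le T\le\one$, so the converse half of Arveson's theorem gives $\Phi-\Psi\in CP$ at once. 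The whole content of the theorem is therefore that, once $\Phi$ and $\Psi$ are required to be KMS self-adjoint, the matrix $T$ furnished by Arveson's theorem is automatically real.

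To establish this I would compute the KMS-adjoint of a map $\Psi_T(A):=\sum_{i,j}T_{i,j}V_i^* A V_j$ under the standing assumption that every $V_j\in\mathfrak{K}$, i.e.\ $\Delta^{-1/2}V_j=V_j^*$, which is just the relation $\sigma^{1/2}V_j^*=V_j\sigma^{1/2}$ (equivalently $V_j\sigma^{1/2}=\sigma^{1/2}V_j^*$). Inserting this relation twice and using cyclicity of the trace gives, for all $A,B\in\cM_N(\C)$,
\begin{align*}
\langle B,\Psi_T(A)\rangle_{KMS}&=\sum_{i,j}T_{i,j}\,\tr\big[B^*\sigma^{1/2}V_i^* A V_j\sigma^{1/2}\big]=\sum_{i,j}T_{i,j}\,\tr\big[V_j^* B^* V_i\,\sigma^{1/2}A\sigma^{1/2}\big],\\
\langle \Psi_T(B),A\rangle_{KMS}&=\tr\big[(\Psi_T(B))^*\sigma^{1/2}A\sigma^{1/2}\big]=\sum_{i,j}\overline{T_{i,j}}\,\tr\big[V_j^* B^* V_i\,\sigma^{1/2}A\sigma^{1/2}\big].
\end{align*}
Since the two right-hand sides differ only through the substitution $T\mapsto\overline T$, this shows that the KMS-adjoint of $\Psi_T$ is $\Psi_{\overline T}$. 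Hence $\Psi_T$ is KMS self-adjoint if and only if $\Psi_T=\Psi_{\overline T}$, and by the uniqueness of the Arveson coefficient matrix (equivalently the linear independence noted above) this happens precisely when $T=\overline T$, i.e.\ when $T$ is real.

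To complete ``$\Rightarrow$'': if $\Phi-\Psi\in CP$ with $\Phi$ and $\Psi$ both KMS self-adjoint, then Arveson's theorem produces a complex $T$ with $0\le T\le\one$ and $\Psi=\Psi_T$, and the KMS self-adjointness of $\Psi$ then forces $T$ to be real by the preceding paragraph; together with ``$\Leftarrow$'' this proves the theorem. (For consistency with the hypotheses one may note that a real $T\ge0$ indeed produces a CP, KMS self-adjoint $\Psi_T$: writing $T=LL^{\mathsf T}$ with $L$ a real $M\times M$ matrix, $\Psi_T(A)=\sum_k W_k^* A W_k$ with $W_k=\sum_i L_{i,k}V_i$, and each $W_k\in\mathfrak{K}$ since $\mathfrak{K}$ is a real vector space. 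The existence, presupposed in the statement, of a minimal Kraus representation of $\Phi$ with operators in $\mathfrak{K}$ is itself a consequence of Theorem \ref{Main2A}.)

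The only step demanding genuine care --- and the one I would write out in full --- is the trace identity in the second paragraph: the defining relation $\sigma^{1/2}V^*=V\sigma^{1/2}$ of $\mathfrak{K}$ is exactly what makes conjugation by $\sigma^{1/2}$ intertwine $V\mapsto V^*$ on $\mathfrak{K}$, so that the KMS-adjoint acts on coefficient matrices by entrywise complex conjugation. The appeal to Arveson--Choi, the linear-independence/uniqueness point, and the factorization $T=LL^{\mathsf T}$ are all routine; I would most likely isolate the identity that the KMS-adjoint of $\Psi_T$ equals $\Psi_{\overline T}$ as a short lemma preceding the proof.
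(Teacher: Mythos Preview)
Your proposal is correct and follows essentially the same approach as the paper: both apply Arveson's Radon--Nikodym theorem to obtain a unique complex $T$ with $0\le T\le\one$, then use the defining relation $\sigma^{1/2}V_j^*=V_j\sigma^{1/2}$ of $\mathfrak{K}$ to show that the KMS-adjoint of $\Psi_T$ is $\Psi_{\overline T}$, whence uniqueness forces $T$ real. The only cosmetic difference is that the paper computes $\Psi_T^{*,KMS}$ via the formula $\cM_{KMS}^{-1}\circ\Psi_T^\dagger\circ\cM_{KMS}$ while you compute $\langle B,\Psi_T(A)\rangle_{KMS}$ and $\langle\Psi_T(B),A\rangle_{KMS}$ directly; the underlying manipulation is identical.
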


\begin{thm} Let $\Phi$ be a unital CP map that is  KMS self-adjoint and let $\Phi(A) = \sum_{j=1}^M V_j^*A V_j$ be a minimal Kraus representation of 
$\Phi$ with $V_j\in \mathfrak{K}$ for each $j$.  Then $\Phi$ is an extreme point of the set of unital CP maps that are KMS self-adjoint if and only if
\begin{equation}\label{lisetA}
\{ V_i^*V_j + V_j^*V_i\ :\ 1\leq i \leq j \leq M\}
\end{equation}
 is linearly independent over the real numbers.   
\end{thm}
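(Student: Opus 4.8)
The plan is to run Choi's argument for the extreme points of $CP(\one)$, but with the Radon--Nikodym description replaced by Theorem~\ref{RNlemA}, and with every matrix that occurs being real symmetric rather than complex Hermitian. Write $CP_{KMS}(\one)$ for the set of unital CP maps that are KMS self-adjoint. As in the non-self-adjoint case, $\Phi$ is an extreme point of $CP_{KMS}(\one)$ if and only if the only $\Psi\in CP_{KMS}(\one)$ with $\Phi\geq t\Psi$ for some $0<t<1$ is $\Psi=\Phi$: indeed, if $\Phi\geq t\Psi$ then $\tfrac{1}{1-t}(\Phi-t\Psi)$ is again CP, KMS self-adjoint and unital, and $\Phi$ is the convex combination of it and $\Psi$ with weights $1-t$ and $t$. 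The key linear-algebra input, used in both directions, is that since $\{V_1,\dots,V_M\}$ is linearly independent, the real-linear map sending a real symmetric $M\times M$ matrix $S$ to the operator $A\mapsto\sum_{i,j}S_{ij}V_i^*AV_j$ is injective: evaluating on $A=|e_a\rangle\langle e_b|$ and using that $\{V_1^*,\dots,V_M^*\}$ is again linearly independent shows that vanishing of this operator forces $\sum_j S_{ij}V_j^*=0$ for each $i$, hence $S=0$.

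\textbf{Independence $\Rightarrow$ extremality.} Suppose $\Psi\in CP_{KMS}(\one)$ and $\Phi\geq t\Psi$ with $0<t<1$. Applying Theorem~\ref{RNlemA} to $\Phi$ and $t\Psi$ gives a real (necessarily symmetric, since $0\leq T$) $M\times M$ matrix $T$ with $0\leq T\leq\one$ and $t\Psi(A)=\sum_{i,j}T_{ij}V_i^*AV_j$. Since $\Phi(\one)=\sum_jV_j^*V_j=\one=\Psi(\one)$, evaluating at $A=\one$ gives $\sum_{i,j}(T-t\one)_{ij}V_i^*V_j=0$. Grouping the terms using symmetry of $T-t\one$, this reads $\sum_{1\leq i\leq j\leq M}c_{ij}\,(V_i^*V_j+V_j^*V_i)=0$ with $c_{ij}=(T-t\one)_{ij}$ for $i<j$ and $c_{ii}=\tfrac12(T-t\one)_{ii}$; the assumed linear independence forces every $c_{ij}=0$, i.e.\ $T=t\one$, hence $\Psi=\Phi$. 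So $\Phi$ is extreme.

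\textbf{Extremality $\Rightarrow$ independence (contrapositive).} Suppose $\sum_{i\leq j}c_{ij}(V_i^*V_j+V_j^*V_i)=0$ is a nontrivial real relation. Form the nonzero real symmetric matrix $S$ with $S_{ij}=c_{ij}$ for $i<j$ and $S_{ii}=2c_{ii}$, so that $\sum_{i,j}S_{ij}V_i^*V_j=0$. Choose $\e>0$ so small that $T_\pm:=\one\pm\e S\geq0$ and set $\Psi_\pm(A)=\sum_{i,j}(T_\pm)_{ij}V_i^*AV_j$. Factoring $T_\pm=R_\pm^*R_\pm$ with $R_\pm$ a real matrix gives $\Psi_\pm(A)=\sum_kW_k^*AW_k$ with $W_k=\sum_i(R_\pm)_{ki}V_i$, and each such $W_k$ lies in the real vector space $\mathfrak{K}$; hence by Theorem~\ref{Main2A} each summand is in $CP_{KMS}$, and since $CP_{KMS}$ is a convex cone, $\Psi_\pm\in CP_{KMS}$. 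Moreover $\Psi_\pm(\one)=\sum_{i,j}(T_\pm)_{ij}V_i^*V_j=\one$ because $\sum_{i,j}S_{ij}V_i^*V_j=0$, so $\Psi_\pm\in CP_{KMS}(\one)$. Finally $\Phi=\tfrac12(\Psi_++\Psi_-)$, while $\Psi_+-\Psi_-=2\e\big(A\mapsto\sum_{i,j}S_{ij}V_i^*AV_j\big)\neq0$ by the injectivity noted above (as $S\neq0$); thus $\Psi_+\neq\Phi$ and $\Phi$ is not an extreme point of $CP_{KMS}(\one)$.

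The only delicate points are the normalization in passing between the symmetric matrix $T-t\one$ and the spanning family $\{V_i^*V_j+V_j^*V_i:i\leq j\}$ (the factor $\tfrac12$ on the diagonal), and checking that the comparison maps $\Psi_\pm$ are genuinely KMS self-adjoint; the latter is exactly where $W_k\in\mathfrak{K}$ together with Theorem~\ref{Main2A} is used. Alternatively, one verifies this directly: $\Delta^{-1/2}W=W^*$ means $\sigma^{1/2}W^*=W\sigma^{1/2}$, and substituting this twice into $\tr[B^*\sigma^{1/2}W^*AW\sigma^{1/2}]$ and cycling the trace yields $\tr[W^*B^*W\sigma^{1/2}A\sigma^{1/2}]$, i.e.\ $\langle B,W^*AW\rangle_{KMS}=\langle W^*BW,A\rangle_{KMS}$, so each map $A\mapsto W^*AW$ with $W\in\mathfrak{K}$ is KMS self-adjoint.
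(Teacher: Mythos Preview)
Your proof is correct and follows essentially the same route as the paper's: both directions are Choi's extremality argument run through the real Radon--Nikodym Theorem~\ref{RNlemA}, with the real symmetry of $T$ allowing the reduction to the spanning set $\{V_i^*V_j+V_j^*V_i\}$. The only cosmetic difference is in the contrapositive direction: the paper builds a single perturbation $\Psi$ with coefficient matrix $\one+tB$ and notes $\tfrac12\Psi\leq\Phi$, whereas you build the symmetric pair $\Psi_\pm$ with coefficients $\one\pm\e S$ and write $\Phi=\tfrac12(\Psi_++\Psi_-)$ directly; your explicit check that $W_k\in\mathfrak K$ (hence $\Psi_\pm\in CP_{KMS}$) and your injectivity remark ensuring $\Psi_+\neq\Psi_-$ are exactly the points the paper handles via the converse of Theorem~\ref{RNlemA} and the uniqueness clause therein.
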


Theorem~\ref{RNlemA} is an analog of Arveson's Radon-Nikodym  Theorem for CP maps in the context of KMS self-adjointness.  
The matrix algebra version of Arveson's theorem states that if  \eqref{kraus1} is a minimal Kraus representation of a CP map $\Phi$, and 
$\Psi$ is another CP map, then $\Phi - \Psi$ is CP if and only if there is a complex  $M\times M$ matrix $T$, $0 \leq T \leq \one$ such that $\Psi$ is given by \eqref{baskrid3A}.  
The restriction that both $\Phi$ and $\Psi$ are self-adjoint on $\H_{KMS}$ results in the further requirements that $V_j$ belongs to the real vector 
space $\mathfrak{K}$ for each $j$, and that $T$ is real.  The matricial version of Arveson's theorem in proved in an appendix.

For every $m\in \mathcal{P}[0,1]$, there is a natural order on $QMS_m$. While $QMS$ is not a pointed cone, $QMS_m$ is always a pointed cone:  We prove:
\begin{thm}\label{PointedA}  For any $m\in  \mathcal{P}[0,1]$, if  $\Phi\in QMS_m$ and $-\Phi\in QMS_m$, then $\Phi =0$.
\end{thm}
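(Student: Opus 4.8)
The plan is to pin down the spectrum of such a $\Phi$ from two sides. Being self-adjoint on $\H_m$ will force $\Phi$ to be diagonalizable with \emph{real} spectrum, while having both $\Phi$ and $-\Phi$ generate a QMS will force the one-parameter group $\{e^{t\Phi}\}_{t\in\R}$ to be uniformly norm-bounded, hence $\Phi$ to have \emph{purely imaginary} spectrum. Since $\R\cap i\R=\{0\}$ and $\Phi$ is diagonalizable, I will then conclude $\Phi=0$.

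First I would note that, as observed just after \eqref{integral_M}, $\langle\cdot,\cdot\rangle_m$ is a genuine positive-definite Hermitian inner product on the finite-dimensional complex vector space $\cM_N(\C)$, so $\H_m$ is a finite-dimensional complex Hilbert space and the spectral theorem applies to any operator that is self-adjoint on it. In particular every element of $QMS_m$ is diagonalizable with real eigenvalues.

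Next, assuming $\Phi,-\Phi\in QMS_m$, the semigroups $\{e^{t\Phi}\}_{t\geq0}$ and $\{e^{-t\Phi}\}_{t\geq0}$ are both quantum Markov semigroups, so $e^{t\Phi}$ is a unital CP map for every $t\in\R$. I would then use that a unital CP map $P$ on $\cM_N(\C)$ is a complete contraction \cite{Paul02}, whence $\|P(A)\|\leq\|A\|$ for all $A$ in the operator norm; equivalently one may argue by hand that for self-adjoint $A$ with $\|A\|\leq 1$ one has $\one-A^2\geq 0$, hence $\one-P(A)^2\geq\one-P(A^2)=P(\one-A^2)\geq 0$ by Kadison's inequality and thus $\|P(A)\|\leq 1$, with a general $A$ reduced to the self-adjoint case by splitting into self-adjoint and anti-self-adjoint parts and using that $P$ is Hermitian. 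Either way $\sup_{t\in\R}\|e^{t\Phi}\|<\infty$. If some eigenvalue $\mu$ of $\Phi$ had $\mathrm{Re}\,\mu\neq 0$, then $e^{t\mu}$ would be an eigenvalue of $e^{t\Phi}$ with $|e^{t\mu}|=e^{t\,\mathrm{Re}\,\mu}$ unbounded as $t\to+\infty$ or as $t\to-\infty$, contradicting the uniform bound; so every eigenvalue of $\Phi$ is purely imaginary. Combined with the previous paragraph, all eigenvalues of $\Phi$ vanish, and diagonalizability gives $\Phi=0$.

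I expect the only genuinely delicate point to be the uniform norm bound on unital CP maps; everything else is the spectral theorem together with unwinding the definitions of $QMS$ and $QMS_m$. A slightly longer alternative that sidesteps the norm estimate: from $e^{t\Phi}$ and $e^{-t\Phi}$ both being unital CP, applying the Schwarz inequality to $P=e^{t\Phi}$ and to its inverse forces $P(A^*A)=P(A)^*P(A)$ for all $A$, so each $e^{t\Phi}$ is a unital $*$-automorphism of $\cM_N(\C)$, i.e.\ $A\mapsto U_t^*AU_t$ for a unitary one-parameter group $U_t$; differentiating yields $\Phi(A)=i[H,A]$ for some self-adjoint $H$, whose spectrum $\{i(h_k-h_l)\}$ is purely imaginary, and one concludes exactly as above.
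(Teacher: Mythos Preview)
Your proof is correct and takes a genuinely different, more conceptual route than the paper's. The paper first invokes Lemma~\ref{comm} to reduce to the case $\Phi(A)=i[H,A]$ with $H=H^*$, and then carries out an explicit characteristic-matrix computation: it writes down $(c_\Phi)_{\alpha,\beta}$ for this $\Phi$ in a matrix unit basis and checks, using the self-adjointness criterion of Theorem~\ref{main1}, that every entry is forced to vanish. Your argument bypasses all of this machinery: you observe that self-adjointness on the finite-dimensional Hilbert space $\H_m$ forces $\Phi$ to be diagonalizable with real spectrum, while the contractivity of unital CP maps makes $\{e^{t\Phi}\}_{t\in\R}$ uniformly bounded and hence forces the spectrum of $\Phi$ to be purely imaginary; combining the two gives zero spectrum and hence $\Phi=0$. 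What your approach buys is brevity and independence from the characteristic-matrix formalism; what the paper's approach buys is that it stays entirely inside the explicit coordinate framework (Theorem~\ref{main1}) used throughout the rest of the article, and along the way it exhibits precisely which matrix entries the self-adjointness condition kills. Your alternative ending via the Schwarz inequality, showing each $e^{t\Phi}$ is a $*$-automorphism and hence $\Phi(A)=i[H,A]$, is essentially how the paper begins (its Lemma~\ref{comm}), after which you again finish spectrally rather than by the paper's coordinate computation.
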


For $\cL_1,\cL_2\in QMS_m$, we define $\cL_1\geq \cL_2$ to mean that $\cL_1- \cL_2 \in QMS_m$. By Theorem~\ref{PointedA}, it follows that if $\cL_1\geq \cL_2$ and 
$\cL_2\geq \cL_1$, then $\cL_1 = \cL_2$. We then define an element $\cL\in QMS_m$ to be {\em extremal} in case whenever $\widetilde{\cL}\in QMS_m$ satisfies 
$\cL \geq t \widetilde \cL$ for some $t>0$, $\widetilde \cL$ is a non-zero multiple of $\cL$.  

The special feature of self-adjointness on $\H_m$ for $m = \delta_s$, and $s\in [0,1]$ that greatly simplifies the task of studying $QMS_m$ in these cases is that there is an orthogonal decomposition of 
$\widehat{\mathfrak{H}}$ into two subspaces, each of which is invariant under the operation of taking the adjoint on $\H_m$:

\begin{defi}\label{orthsdef} Define $\widehat{\mathfrak{H}}_{\mathcal{S}}$ to be the subspace of $\widehat{\mathfrak{H}}$ consisting of all operators $\Phi$ of the form
\begin{equation}\label{special1}
\Phi(A) = XA + AY
\end{equation}
for some $X,Y\in \cM_N(\C)$. 
\end{defi}

We shall prove:
\begin{lm}\label{decomplm}  For each $s\in [0,1]$, both $\widehat{\mathfrak{H}}_{\mathcal{S}}$ and $\widehat{\mathfrak{H}}_{\mathcal{S}}^\perp$ are invariant under the 
operation of taking the adjoint  with respect to the inner product  $\langle \cdot,\cdot \rangle_{\delta_s}$. 
Moreover, if $\{V_1,\dots,V_M\}$ is linearly independent in $\cM_N(\C)$, the map $A \mapsto \sum_{j=1}^M V_j^* A V_j$ belongs to $\widehat{\mathfrak{H}}_{\mathcal{S}}^\perp$ if and only if $\tr[V_j] = 0$ for each $j$. 
\end{lm}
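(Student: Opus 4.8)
The plan is to work in the Hilbert space $\widehat{\mathfrak{H}}$ and compute the adjoint, with respect to $\langle\cdot,\cdot\rangle_{\delta_s}$, of a generic operator of the two types appearing in the statement. For the first claim, recall that the adjoint of $\Phi$ on $\H_{\delta_s}$ is $\cM_{\delta_s}^{-1}\Phi^\dagger \cM_{\delta_s}$, where $\dagger$ is the $\mathfrak{H}$-adjoint and $\cM_{\delta_s}(A)=\sigma^s A\sigma^{1-s}$. So I would first record the two elementary facts: the $\H_{\delta_s}$-adjoint of left multiplication $L_X:A\mapsto XA$ is left multiplication by $\sigma^{-s}X^*\sigma^s$ (a matrix, since $\sigma^s$ commutes through the left slot up to conjugation), and similarly the $\H_{\delta_s}$-adjoint of right multiplication $R_Y:A\mapsto AY$ is right multiplication by $\sigma^{1-s}Y^*\sigma^{s-1}$. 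Both of these are again in $\widehat{\mathfrak{H}}_{\mathcal{S}}$, so $\widehat{\mathfrak{H}}_{\mathcal{S}}$ is invariant; since the $\H_{\delta_s}$-adjoint is an (anti-)automorphism of $\mathcal{L}(\cM_N(\C))$ preserving the $\widehat{\mathfrak{H}}$-inner product structure up to the similarity by the positive operator $\cM_{\delta_s}$, invariance of $\widehat{\mathfrak{H}}_{\mathcal{S}}$ forces invariance of its orthogonal complement. (Here I should be slightly careful: the $\H_m$-adjoint is not the $\widehat{\mathfrak{H}}$-adjoint, so "orthogonal complement invariant" needs the observation that $\widehat{\mathfrak{H}}_{\mathcal{S}}$ is also invariant under the $\widehat{\mathfrak{H}}$-adjoint of the $\H_{\delta_s}$-adjoint map, which is just the same kind of computation; alternatively note the $\H_{\delta_s}$-adjoint is a $\ast$-preserving conjugation and argue directly.)

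For the second claim, let $\Phi(A)=\sum_j V_j^*AV_j$ with $\{V_1,\dots,V_M\}$ linearly independent. I would compute $\langle \Phi, \Psi\rangle_{\widehat{\mathfrak{H}}}$ for $\Psi\in\widehat{\mathfrak{H}}_{\mathcal{S}}$ of the form $\Psi(A)=XA+AY$ and show this pairing vanishes for all such $X,Y$ if and only if $\tr[V_j]=0$ for all $j$. Using the orthonormal basis formula \eqref{dagg2} and linearity, it suffices to test against $\Psi=L_X$ and $\Psi=R_Y$ separately. A direct computation with $\{E_{i,j}\}$ (or abstractly, using that $\langle L_X,\Phi\rangle_{\widehat{\mathfrak{H}}}$ is an inner product of $X$ against something built from the $V_j$) should give $\langle L_X,\Phi\rangle_{\widehat{\mathfrak{H}}} = \tfrac{1}{N}\tr\!\big[X^*\sum_j \tr[V_j]\,V_j^*\big]$ up to normalization, and similarly $\langle R_Y,\Phi\rangle_{\widehat{\mathfrak{H}}} = \tfrac{1}{N}\tr\!\big[\sum_j \overline{\tr[V_j]}\,V_j\, Y^*\big]$; I would double-check the exact placement of stars and conjugates. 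Since $X,Y$ range over all of $\cM_N(\C)$, the pairing vanishes identically iff $\sum_j \tr[V_j]\,V_j^* = 0$, and by linear independence of $\{V_j^*\}$ (equivalent to that of $\{V_j\}$) this holds iff $\tr[V_j]=0$ for every $j$.

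The main obstacle I anticipate is purely bookkeeping: getting the $\sigma$-powers and the adjoint/conjugate placements exactly right, and in particular being careful that for $m=\delta_s$ the relevant operator is $\cM_{\delta_s}(A)=\sigma^sA\sigma^{1-s}$ (not symmetrized), so the adjoint formulas for $L_X$ and $R_Y$ carry asymmetric powers of $\sigma$ — but crucially these powers only conjugate the matrix $X$ (resp. $Y$) and never mix the left and right slots, which is exactly why $\widehat{\mathfrak{H}}_{\mathcal{S}}$ stays invariant. The second part's trace computation is conceptually trivial once one writes $\langle\Phi,\Psi\rangle_{\widehat{\mathfrak{H}}}$ via \eqref{dagg2}; the only subtlety is that the $\widehat{\mathfrak{H}}$-orthogonality in the Lemma is with respect to the normalized Hilbert–Schmidt inner product on $\mathcal{L}(\cM_N(\C))$ (not $\langle\cdot,\cdot\rangle_{\delta_s}$), so the characterization of $\widehat{\mathfrak{H}}_{\mathcal{S}}^\perp$ is $\sigma$-independent, which is consistent with the stated condition $\tr[V_j]=0$ being $\sigma$-free.
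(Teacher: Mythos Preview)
Your proposal is correct. For the invariance claim your argument coincides with the paper's (Lemma~\ref{orthinv}): compute $(L_X)^{*,\delta_s}=L_{\sigma^{-s}X^*\sigma^s}$ and $(R_Y)^{*,\delta_s}=R_{\sigma^{1-s}Y^*\sigma^{s-1}}$, so $\widehat{\mathfrak{H}}_{\mathcal{S}}$ is invariant; the paper, like you, then asserts that invariance of the complement follows, and your parenthetical remark correctly identifies the missing justification (the $\widehat{\mathfrak{H}}$-adjoint of $\Phi\mapsto\cM^{-1}\Phi\cM$ is $\Phi\mapsto\cM\Phi\cM^{-1}$, which also preserves $\widehat{\mathfrak{H}}_{\mathcal{S}}$, and $\dagger$ is anti-unitary).

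For the trace condition you take a genuinely different route. The paper proves it via the characteristic-matrix machinery: first Lemma~\ref{orthchar} identifies $\widehat{\mathfrak{H}}_{\mathcal{S}}$ with the operators whose reduced characteristic matrix (in a unital basis) vanishes, and then Lemma~\ref{redchar} reads off that the first row and column of $C_\Phi$ vanish iff the coefficient of $\one$ in each $V_j$ vanishes. Your approach is more direct: using the factorization $\Phi=\sum_j \#(V_j^*\otimes V_j)$ and \eqref{orth}, one gets $\langle L_X,\Phi\rangle_{\widehat{\mathfrak{H}}}=\sum_j\langle X,V_j^*\rangle_{\mathfrak{H}}\langle\one,V_j\rangle_{\mathfrak{H}}$, which vanishes for all $X$ iff $\sum_j\tr[V_j]\,V_j^*=0$, and linear independence finishes it. Your argument is shorter and self-contained for this lemma; the paper's detour through characteristic matrices pays off because that framework is reused heavily elsewhere (e.g.\ Lemma~\ref{redLm}, Theorem~\ref{main1}).
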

(Of course, once the invariance of one subspace is shown, the invariance of the other follows).   

Now consider any QMS generator $\cL$. By the $LGKS$ Theorem \cite{Lin76, GKS76}  recalled earlier  in the introduction, $\cL$ has the form
\begin{equation}\label{LGKS}
\cL(A) = (G^*A + AG) +    \Psi(A)
\end{equation}
where $\Psi$ is CP.  Let 
$$
\Psi(A) = \sum_{j=1}^M V_j^* A V_j
$$ 
be a minimal Kraus representation of $\Psi$.   Replacing each $V_j$ by $V_j - \tr[V_j]\one$, and absorbing the difference into $G$, we may assume that $\tr[V_j] = 0$ for each $j$. 
By  Lemma~\ref{decomplm},  we then have $\Psi\in  \widehat{\mathfrak{H}}_{\mathcal{S}}^\perp$.
Furthermore, adding a purely imaginary multiple of $\one$ to $G$ does not change $\cL$, and hence we may also assume  without loss of generality that $\tr[G]\in \R$.     
 Thus, making these choices for $G$ and $\Psi$, \eqref{LGKS}
gives the decomposition of $\cL$ into its components in $\widehat{\mathfrak{H}}_{\mathcal{S}}$ and $\widehat{\mathfrak{H}}_{\mathcal{S}}^\perp$.  Then by Lemma~\ref{decomplm}, if 
$\cL$ is self-adjoint on $\H_{\delta_s}$, each of these pieces must be {\em individually} self-adjoint on $\H_{\delta_s}$.   For example, for $s=1/2$, corresponding to the KMS inner product, 
Theorem~\ref{Main2A} gives the necessary and sufficient conditions for $\Psi$ to be self-adjoint, and then an easy computation shows that
$A \mapsto G^*A + AG$ is self-adjoint on $\H_{KMS}$ if and only if $\Delta^{-1/2}G = G^*$, where we have taken, without loss of generality, $\tr[G]\in \R$. 

This brings us to a result of Fagnola and Umanita \cite[Theorem 37]{FU07}:  Let $\cL \in QMS$ be given in the form
$$
\cL(A) = G^*A + AG + \sum_{j=1}^M W_j^* A W_j
$$
where $\tr[G]\in \R$, $\{W_1,\dots,W_M\}$ is linearly independent, and for each $j$, $\tr[W_j]= 0$.   Then $\cL$ is self-adjoint on $\H_{KMS}$ is and only if 

\smallskip
\noindent{\it (i)} $\Delta^{-1/2}G = G^*$

\smallskip
\noindent{\it (ii)} There is an $M\times M$ unitary matrix $\widetilde U$ such that for each $j$,  $\Delta^{-1/2}W_j   = \sum_{k=1}^M \widetilde U_{j,k} W_k^*$. 
\smallskip

We have already explained how condition {\it (i)} follows from Lemma~\ref{decomplm} apart from the simple computation that will be provided below. As for {\it (ii)}, let $\Psi(A)= \sum_{j=1}^M W_j^* A W_j$. 
By Lemma~\ref{decomplm}, this must be self-adjoint on $\H_{KMS}$ and then by Theorem~\ref{Main2A} there is another minimal Kraus representation $\Psi(A)= \sum_{j=1}^M V_j^* A V_j$, necessarily with the same 
$M$, such that $\Delta^{-1/2}V_j=  V_j^*$.  By the unitary equivalence of minimal Kraus representations (see the appendix), there is an $M\times M$ unitary matrix such that for each $j$, $W_j = \sum_{k=1}^M U_{j,k}V_k$ and hence 
$$\Delta^{-1/2} W_j = \sum_{k=1}^M U_{j,k}\Delta^{-1/2}  V_k = \sum_{k=1}^M U_{j,k}  V_k^* = \sum_{k,\ell=1}^M U_{j,k}U_{\ell,k}  W_k^* = \sum_{\ell=1}^M (UU^T)_{j,\ell}  W_k^*$$
where $U^T$ denotes the transpose of $U$, and of course $\widetilde U := UU^T$  is unitary.  

Conversely, suppose that {\it (ii)} is satisfied.
Let $U$ be another unitary to be chosen below.  Then for each $\ell$,
$$\Delta^{-1/2} \left(\sum_{j=1}^M U_{\ell,j}W_j \right) = \sum_{j,k=1}^M U_{\ell,j} \widetilde U_{j,k}W_k^* \ ,$$
and thus if we choose $U$ so that $\overline{U} = U\widetilde U$, we may define $V_j  = \sum_{k=1}^M U_{j,k}W_k$, $\Psi(A) = \sum_{j=1}^M V_j^*A V_j$ and for each $j$, $\Delta^{-1/2} V_j = V_j^*$ and 
$\tr[V_j] =0$. It is easy to find such $U$: Choose an orthonormal basis in which $\widetilde{U}$ is diagonal with $j$th diagonal entry $e^{i\theta_j}$.  We may take $U$ to be diagonal in this same basis with $j$th diagonal entry $e^{-i\theta_j/2}$.

The result of Fagnola and Umanita allows one to check whether or not any given QMS generator is self-adjoint on $\H_{KMS}$. However, it does not  provide a parameterization of 
$QMS_{KMS}$, the cone of QMS generators that are self-adjoint on $\H_{KMS}$, nor can one readily read off the set of extreme points from their result; there are compatibility conditions relating $G$ and $\{W_1,\dots,W_M\}$.   The following result provides this and other additional information:

\begin{thm}\label{Main7A}  There is a one-to-one correspondence between elements $\cL$ of  $QMS_{KMS}$  and CP maps $ \Psi\in \widehat{\mathfrak{H}}_\mathcal{S}^\perp$ that are self-adjoint on $\H_{KMS}$.   
The correspondence identifies $\Psi$ with $\cL_\Psi$ where
\begin{equation}\label{gsw16}
\cL_\Psi(A) = G^*A + AG + \Psi(A)
\end{equation}
where $G = H+iK$, $H$ and $K$ self-adjoint and given by
\begin{equation}\label{HforKMS_intro}
H := \tfrac12\Psi(\one)
\end{equation}
and 
\begin{equation}\label{gsw17_intro}
K := \frac1i\int_0^\infty e^{-t   \sigma^{1/2}} ( \sigma^{1/2}  H - H  \sigma^{1/2})  e^{-t   \sigma^{1/2}}{\rm d} t\ .
\end{equation}
Furthermore,  for all $\cL_{\Psi_1},\cL_{\Psi_2} \in QMS_{KMS}$,  $\cL_{\Psi_1} - \cL_{\Psi_2} \in QMS$ if and only if $\Psi_1 - \Psi_2 \in CP$; i.e., $\cL_{\Psi_1} \geq \cL_{\Psi_2}$ if and only if 
$\Psi_1 \geq \Psi_2$, and the extreme points of $QMS_{KMS}$ are precisely the generators of the form
$$\cL(A) := G^*A + AG + V^*AV$$
where $\Delta^{-1/2}V = V^*$ and where $G=H+iK$ is given by \eqref{HforKMS_intro} and \eqref{gsw17_intro} for $\Psi(A) = V^*AV$. 
\end{thm}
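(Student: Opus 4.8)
\emph{Proof strategy.} The plan is to first isolate a structural fact about arbitrary QMS generators, then use it to set up the bijection, and finally to transport the order structure and the extreme-point description through that bijection using Theorems~\ref{Main2A} and~\ref{RNlemA}. The structural point is: \emph{for every $\cL\in QMS$, the $\widehat{\mathfrak{H}}_{\mathcal{S}}^\perp$-component of $\cL$ is completely positive, and its $\widehat{\mathfrak{H}}_{\mathcal{S}}$-component has the form $A\mapsto G^*A+AG$.} To prove this, take any Lindblad representation $\cL(A)=G_0^*A+AG_0+\sum_j V_j^*AV_j$ and write $V_j=V_j'+c_j\one$ with $c_j=N^{-1}\tr V_j$, so that $\tr V_j'=0$; the elementary identity
\begin{equation*}
\sum_j V_j^*AV_j=\sum_j(V_j')^*AV_j'+W^*A+AW+cA,\qquad W:=\sum_j\overline{c_j}\,V_j',\quad c:=\sum_j|c_j|^2\ge 0,
\end{equation*}
rewrites $\cL(A)=\widetilde G^*A+A\widetilde G+\Psi'(A)$ with $\widetilde G:=G_0+W+\tfrac c2\one$ and $\Psi'(A):=\sum_j(V_j')^*AV_j'$ completely positive. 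By Lemma~\ref{decomplm}, every rank-one summand of $\Psi'$, hence $\Psi'$ itself, lies in $\widehat{\mathfrak{H}}_{\mathcal{S}}^\perp$, whereas $A\mapsto\widetilde G^*A+A\widetilde G$ lies in $\widehat{\mathfrak{H}}_{\mathcal{S}}$; uniqueness of the decomposition $\widehat{\mathfrak{H}}=\widehat{\mathfrak{H}}_{\mathcal{S}}\oplus\widehat{\mathfrak{H}}_{\mathcal{S}}^\perp$ identifies the two components of $\cL$ with these.

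\emph{The bijection.} Let $\cL\in QMS_{KMS}$ and split $\cL=\cL_{\mathcal{S}}+\Psi$ along that decomposition. By Lemma~\ref{decomplm}, both subspaces are invariant under the $\H_{KMS}$-adjoint, so $\cL_{\mathcal{S}}$ and $\Psi$ are separately KMS self-adjoint; by the structural point, $\Psi$ is CP and $\cL_{\mathcal{S}}(A)=G^*A+AG$. The identity $\cL\one=0$ gives $G+G^*=-\Psi(\one)$, which fixes the self-adjoint part $H:=\tfrac12(G+G^*)=-\tfrac12\Psi(\one)$ of $G$, and KMS self-adjointness of $\cL_{\mathcal{S}}$ gives $\Delta^{-1/2}G=G^*$ (an easy computation, already noted before the theorem). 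Since $\Delta^{-1/2}A=\sigma^{-1/2}A\sigma^{1/2}$, writing $G=H+iK$ with $K=K^*$ shows $\Delta^{-1/2}G=G^*$ is equivalent to $i(\sigma^{1/2}K+K\sigma^{1/2})=\sigma^{1/2}H-H\sigma^{1/2}$; differentiating $t\mapsto e^{-t\sigma^{1/2}}(\sigma^{1/2}H-H\sigma^{1/2})e^{-t\sigma^{1/2}}$ and integrating over $[0,\infty)$ — the integral converges because $\sigma^{1/2}>0$ — shows the $K$ of~\eqref{gsw17_intro} solves it, and this is the unique self-adjoint solution because $A\mapsto\sigma^{1/2}A+A\sigma^{1/2}$ is injective (in an eigenbasis of $\sigma$ its entries are scaled by $\lambda_i^{1/2}+\lambda_j^{1/2}>0$). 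Hence $G=H+iK$ is determined by $\Psi$, so $\cL=\cL_\Psi$. Conversely, for any CP, KMS self-adjoint $\Psi\in\widehat{\mathfrak{H}}_{\mathcal{S}}^\perp$, the operator $\cL_\Psi$ of~\eqref{gsw16} is of Lindblad form with CP part $\Psi$ and satisfies $\cL_\Psi\one=0$ because $G+G^*=-\Psi(\one)$, hence generates a QMS; it is KMS self-adjoint because both of its pieces are, and its $\widehat{\mathfrak{H}}_{\mathcal{S}}^\perp$-component is $\Psi$. Thus $\Psi\mapsto\cL_\Psi$ is a bijection onto $QMS_{KMS}$.

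\emph{Order equivalence.} Since $H$ and $K$ depend linearly on $\Psi$, the map $\Psi\mapsto\cL_\Psi$ is linear, so $\cL_{\Psi_1}-\cL_{\Psi_2}=\cL_{\Psi_1-\Psi_2}$, whose $\widehat{\mathfrak{H}}_{\mathcal{S}}^\perp$-component is $\Psi_1-\Psi_2$. If $\Psi_1-\Psi_2\in CP$, then $\cL_{\Psi_1-\Psi_2}$ is of the form $\cL_\Psi$ for a CP, KMS self-adjoint $\Psi\in\widehat{\mathfrak{H}}_{\mathcal{S}}^\perp$, hence lies in $QMS_{KMS}\subseteq QMS$. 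Conversely, if $\cL_{\Psi_1}-\cL_{\Psi_2}\in QMS$, the structural point shows that its $\widehat{\mathfrak{H}}_{\mathcal{S}}^\perp$-component $\Psi_1-\Psi_2$ is CP. This is exactly $\cL_{\Psi_1}\ge\cL_{\Psi_2}\iff\Psi_1\ge\Psi_2$, so $\Psi\mapsto\cL_\Psi$ is an order isomorphism.

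\emph{Extreme points.} By the order isomorphism, $\cL_\Psi$ is extremal in $QMS_{KMS}$ exactly when $\Psi$ is extremal, in Arveson's sense, within the cone of CP, KMS self-adjoint maps lying in $\widehat{\mathfrak{H}}_{\mathcal{S}}^\perp$. But if $\Xi$ is CP and KMS self-adjoint with $\Psi\ge t\Xi$ for some $t>0$, then Theorem~\ref{RNlemA}, applied to a minimal Kraus representation $\Psi(A)=\sum_j V_j^*AV_j$ with $V_j\in\mathfrak{K}$ (necessarily $\tr V_j=0$, since $\Psi\in\widehat{\mathfrak{H}}_{\mathcal{S}}^\perp$), gives $\Xi(A)=\sum_{i,j}T_{ij}V_i^*AV_j$ with $T\ge 0$ real; writing $T=S^*S$ displays a Kraus representation of $\Xi$ whose operators are linear combinations of the traceless $V_j$, so $\Xi\in\widehat{\mathfrak{H}}_{\mathcal{S}}^\perp$ automatically. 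Hence this extremality coincides with extremality in $CP_{KMS}$, and by Theorem~\ref{Main2A} the extremal $\Psi$ are exactly those of the form $\Psi(A)=V^*AV$ with $\Delta^{-1/2}V=V^*$ (and $\tr V=0$); the corresponding generators $\cL_\Psi$ are precisely the extreme points described in the theorem. I expect the main obstacle to be the structural point — that the dissipative part of an \emph{arbitrary} QMS generator is completely positive — which, together with the linearity of $\Psi\mapsto\cL_\Psi$, is what upgrades the correspondence to an order isomorphism; granting that and the two cited theorems, the remainder is bookkeeping.
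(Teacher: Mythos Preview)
Your proof is correct and follows essentially the same route as the paper's: split $\cL$ along $\widehat{\mathfrak{H}}_{\mathcal S}\oplus\widehat{\mathfrak{H}}_{\mathcal S}^\perp$, use Lemma~\ref{decomplm} to make each piece KMS self-adjoint, solve the Lyapunov equation for $K$, and read off the order and extreme points. The only notable difference is that you spell out, via Theorem~\ref{RNlemA}, why extremality in $CP_{KMS}\cap\widehat{\mathfrak{H}}_{\mathcal S}^\perp$ coincides with extremality in $CP_{KMS}$ (and correctly note $\tr V=0$), whereas the paper simply appeals to Theorem~\ref{Main2A}; your sign $H=-\tfrac12\Psi(\one)$ is also the right one, fixing a typo in the stated formula.
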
 

\begin{remark} It is easy to see that $K$ defined by \eqref{gsw17_intro} satisfies $\tr[K] =0$, so that $\tr[G]\in \R$.  
\end{remark}

We also prove analogous results for self-adjointness with respect to the inner products on $\H_{\delta_s}$ for all $s\neq 1/2$. This includes the GNS case, and in fact it is well known that for $s\neq 1/2$, any Hermitian operator 
$\Phi$ is self-adjoint on $\H_{\delta_s}$ for {\em all} $s\neq 1/2$ if and only if it is self-adjoint  on $\H_{GNS} = \H_{\delta_0}$. It follows that Hermitian operators $\Phi$ that are self-adjoint on 
$\H_{GNS}$ are {\em universally self-adjoint} --  they are self-adjoint  on $\H_m$ for all $m$. The results specifying extreme points, e.g., of the set of GNS self-adjoint unital CP maps, etc., are new, 
as is the theorem giving necessary conditions for $\Phi_1 - \Phi_2 \in CP$ when $\Phi_1$ and $\Phi_2$ are CP and self-adjoint on $\H_{GNS}$, while the structue of $QMS_{GNS}$ was worked out by Alicki \cite{A78}.

We turn next to the BKM inner product.  It is much more difficult to prove analogs of the theorems proved here for the KMS and GNS inner product, in part because 
$\widetilde{\mathfrak{H}}_{\mathcal{S}}^\perp$ is not invariant under the operation of taking the BKM adjoint. However, the BKM case shares one very nice feature with the KMS case;
in both cases $\cM_m^{-1}$ is CP.  Using this, we construct and study a one-to-one map from $CP_{KMS}$ into $CP_{BKM}$. This map is unfortunately not surjective, but it does also take unital 
maps to unital maps, and so it gives us a large class of quantum operations that are self-adjoint on $\H_{BKM}$.   

Finally, we study operators that are {\em evenly self-adjoint}, by which we mean self-adjoint on $\H_{m}$ for all even $m$.  The class of evenly self-adjoint QMS generators is strictly larger than the class of 
GNS self-adjoint QMS generators, but also strictly smaller  than the set of QMS generators that are BKM or KMS self-adjoint.

\section{Background material}

We recall some useful tools.
It is convenient to index orthonormal bases of $\fH$ by ordered pairs $(i,j) \in \{1,\dots N\}\times  \{1,\dots N\} =: \cJ_N$. We use lower case greek letters to denote elements of the index set. For $\alpha = (i,j)\in \cJ_N$, $\alpha':= (j,i)$.

For $F,G\in \cM_N(\C)$, define the operator $\#(F\otimes G)$ on $\fH$ by
\begin{equation}\label{shadef}
\#(F\otimes G)X := FXG\ .
\end{equation}
Simple computations show that 
\begin{equation}\label{orth}
\langle \#(F_1\otimes G_1), \#(F_2\otimes G_2)\rangle_{\widehat{\fH}} = \langle F_1,F_2\rangle_{\fH} \langle G_1,G_2\rangle_{\fH}\ .
\end{equation}
Hence, if $\{F_\alpha\}_{\alpha\in \cJ_N}$   and $\{G_\alpha\}_{\alpha\in \cJ_N}$  are two orthonormal bases of $\fH$,  $\{\#(F_\alpha\otimes G_\beta)\}_{\alpha,\beta\in \cJ_N}$ is an orthonormal basis of $\widehat{\fH}$. 

Now fix any orthonormal basis $\{F_\alpha\}_{\alpha\in \cJ_N}$ of $\fH$.
Then $\{F^*_\alpha\}_{\alpha\in \cJ_N}$  is also an orthonormal basis of $\fH$, and hence $\{\#(F_\alpha^*\otimes F_\beta)\}_{\alpha,\beta\in \cJ_N}$ is an orthonormal basis of $\widehat{\fH}$.   Thus,
 every linear operator $\Phi$ on $\fH$ has an expansion
\begin{equation}\label{GKS1}
\Phi = \sum_{\alpha,\beta\in \cJ_N} (c_\Phi)_{\alpha,\beta}\#(F_\alpha^*\otimes F_\beta)\ ,
\end{equation}
where
\begin{equation}\label{GKS2}
(c_\Phi)_{\alpha,\beta} = \langle \#(F_\alpha^*\otimes F_\beta), \Phi \rangle_{\widehat{\fH}}\ .
\end{equation}
In particular, the coefficients $(c_\Phi)_{\alpha,\beta}$ are uniquely determined. The following definition is from \cite{GKS76}.

\begin{defi}[Characteristic matrix] Given a linear operator $\Phi$ on $\fH$, and an orthonormal basis $\{F_\alpha\}_{\alpha\in \cJ_N}$ of $\fH$, 
its {\em characteristic matrix} for this orthonormal basis is the $N^2\times N^2$ matrix  $C_\Phi$ whose $(\alpha,\beta)$th entry is $(c_\Phi)_{\alpha,\beta}$ as specified in 
\eqref{GKS2}. 
\end{defi}

\begin{remark}\label{exg} An easy computation shows the following: Let $\Phi$ be a linear operator on $\fH$.  Let $\{F_\alpha\}$ and $\{\widetilde{F}_\alpha\}$ be two orthonormal bases of $\fH$.   Let $C_\Phi$ be the characteristic matrix of $\Phi$ with respect to $\{F_\alpha\}$,
and let $\widetilde{C}_\Phi$ be the characteristic matrix of $\Phi$ with respect to $\{\widetilde{F}_\alpha\}$.  Let $U$ be the $N^2\times N^2$  unitary matrix such that
${\displaystyle 
\widetilde{F}_\alpha = \sum_{\beta} U_{\alpha,\beta} F_\beta}$. 
Then
${\displaystyle
\tilde C_\Phi = UC_\Phi U^*}$.
\end{remark}

\begin{lm}[See \cite{GKS76}]\label{Hchar}  A linear operator $\Phi$ on $\fH$ is Hermitian if and only if $C_\Phi$, for any orthonormal basis $\{F_\alpha\}_{\alpha\in \cJ_N}$, is self-adjoint. 
\end{lm}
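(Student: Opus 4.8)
The plan is to recast the Hermitian condition as an operator identity and then read it off the unique expansion \eqref{GKS1}. First I would introduce the conjugate-linear involution $J\colon \fH\to\fH$, $J(A)=A^\ast$, and note that for any linear $\Phi$ on $\fH$ the composite $J\Phi J$ is again a linear operator on $\fH$, with $(J\Phi J)(X)=(\Phi(X^\ast))^\ast$. Hence $\Phi$ is Hermitian, i.e.\ $\Phi(A^\ast)=(\Phi(A))^\ast$ for all $A$, precisely when $J\Phi J=\Phi$.

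Next I would compute the action of $\Psi\mapsto J\Psi J$ on the basis operators of $\widehat{\fH}$: a one-line computation gives $(J\,\#(F^\ast\otimes G)\,J)(X)=(F^\ast X^\ast G)^\ast=G^\ast X F$, so that $J\,\#(F_\alpha^\ast\otimes F_\beta)\,J=\#(F_\beta^\ast\otimes F_\alpha)$. Since $J$ is conjugate-linear, the map $\Psi\mapsto J\Psi J$ is conjugate-linear on $\mathcal{L}(\fH)$; applying it to $\Phi=\sum_{\alpha,\beta}(c_\Phi)_{\alpha,\beta}\#(F_\alpha^\ast\otimes F_\beta)$ and relabeling the summation indices gives $J\Phi J=\sum_{\alpha,\beta}\overline{(c_\Phi)_{\beta,\alpha}}\,\#(F_\alpha^\ast\otimes F_\beta)$. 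By the uniqueness of the coefficients in \eqref{GKS1}, the identity $J\Phi J=\Phi$ is equivalent to $(c_\Phi)_{\alpha,\beta}=\overline{(c_\Phi)_{\beta,\alpha}}$ for all $\alpha,\beta$, i.e.\ to $C_\Phi=C_\Phi^\ast$. Together with the first step this yields: $\Phi$ is Hermitian if and only if $C_\Phi$ is self-adjoint for the chosen orthonormal basis.

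To handle the words ``for any orthonormal basis'', I would invoke Remark~\ref{exg}: a change of orthonormal basis replaces $C_\Phi$ by $UC_\Phi U^\ast$ with $U$ unitary, and $UC_\Phi U^\ast$ is self-adjoint if and only if $C_\Phi$ is; so self-adjointness of the characteristic matrix holds for one orthonormal basis exactly when it holds for all of them. I do not expect a genuine obstacle here; the only subtlety is the bookkeeping in the computation of $J\,\#(F_\alpha^\ast\otimes F_\beta)\,J$ — one must keep track that it is the conjugate transpose, not merely the transpose, of $C_\Phi$ that appears, which is precisely where the conjugate-linearity of $J$ enters.
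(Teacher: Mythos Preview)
Your proposal is correct and follows essentially the same route as the paper: expand $\Phi$ in the basis $\{\#(F_\alpha^*\otimes F_\beta)\}$, compute what the Hermitian condition does to the coefficients, relabel indices, and invoke uniqueness of the expansion \eqref{GKS1} to conclude $(c_\Phi)_{\alpha,\beta}=\overline{(c_\Phi)_{\beta,\alpha}}$. Your packaging via the involution $J$ and the conjugate-linear map $\Psi\mapsto J\Psi J$ is a mild abstraction of the paper's bare-hands computation of $(\Phi(A^*))^*$, and your final paragraph on basis independence (via Remark~\ref{exg}) makes explicit something the paper leaves implicit since its computation already works for an arbitrary orthonormal basis; neither difference is substantive.
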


\begin{proof}  We compute
$$(\Phi(A^*))^* := \left( \sum_{\alpha,\beta\in \cJ_N} (c_\Phi)_{\alpha,\beta}F_\alpha^* A^* F_\beta \right)^* = \sum_{\alpha,\beta\in \cJ_N} \overline{(c_\Phi)_{\alpha,\beta}}F_\beta^* A^* F_\alpha =
\sum_{\alpha,\beta\in \cJ_N} \overline{(c_\Phi)_{\beta,\alpha}}F_\alpha^* A^* F_\beta \ . 
$$
By the uniqueness of the coefficients, $\Phi$ is Hermitian if and only if $(c_\Phi)_{\alpha,\beta} = \overline{(c_\Phi)_{\beta,\alpha}}$ for all $\alpha,\beta$. 
\end{proof}

The following lemma is a variant of Choi's Theorem \cite{Choi75}; see \cite{GKS76} for this version.

\begin{lm}\label{CPchar}  A linear operator $\Phi$ on $\cM_N(\C)$  is completely positive if and only if for every orthonormal basis $\{F_\alpha\}_{\alpha\in \cJ_N}$ of $\fH$, the corresponding characteristic matrix $C_\Phi$ is positive semi-definite. 
\end{lm}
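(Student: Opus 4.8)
The plan is to reduce the ``for every orthonormal basis'' assertion to a single fixed basis and then match positive semi-definiteness of the characteristic matrix with the existence of a Kraus representation, in the style of Choi's original argument. For the reduction: by Remark~\ref{exg}, two orthonormal bases $\{F_\alpha\}$ and $\{\widetilde F_\alpha\}$ of $\fH$ yield characteristic matrices related by $\widetilde C_\Phi = U C_\Phi U^*$ with $U$ unitary, and unitary conjugation preserves positive semi-definiteness; hence $C_\Phi\geq 0$ for one orthonormal basis of $\fH$ if and only if it holds for all of them. So it suffices to fix a basis $\{F_\alpha\}_{\alpha\in\cJ_N}$ of $\fH$ and work with the coefficients $(c_\Phi)_{\alpha,\beta}$ of the expansion \eqref{GKS1}.

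Assume first that $C_\Phi\geq 0$. By the spectral theorem, write $C_\Phi=\sum_k |\psi_k\rangle\langle\psi_k|$ with the nonnegative eigenvalues absorbed into the vectors $\psi_k\in\C^{\cJ_N}$, so that $(c_\Phi)_{\alpha,\beta}=\sum_k (\psi_k)_\alpha\,\overline{(\psi_k)_\beta}$. Set $V_k:=\sum_\alpha \overline{(\psi_k)_\alpha}\,F_\alpha\in\cM_N(\C)$. Substituting into \eqref{GKS1} and collecting the sums over $\alpha$ and $\beta$ gives $\Phi(A)=\sum_{\alpha,\beta}(c_\Phi)_{\alpha,\beta}F_\alpha^*AF_\beta=\sum_k V_k^* A V_k$, a Kraus representation; hence $\Phi$ is completely positive.

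Conversely, assume $\Phi$ is completely positive, so by \eqref{kraus1} it admits a Kraus representation $\Phi(A)=\sum_{j=1}^M V_j^* A V_j$. Expand each $V_j=\sum_\alpha \overline{(w_j)_\alpha}\,F_\alpha$ in the fixed basis, with $(w_j)_\alpha=\overline{\langle F_\alpha,V_j\rangle_\fH}$, and compute $(c_\Phi)_{\alpha,\beta}=\langle \#(F_\alpha^*\otimes F_\beta),\Phi\rangle_{\widehat{\fH}}$ using the product formula \eqref{orth} together with the identity $\langle F_\alpha^*,V_j^*\rangle_\fH=\overline{\langle F_\alpha,V_j\rangle_\fH}$ (which follows from cyclicity of the trace and $\overline{\tr M}=\tr M^*$); this yields $(c_\Phi)_{\alpha,\beta}=\sum_{j=1}^M (w_j)_\alpha\,\overline{(w_j)_\beta}$, a sum of rank-one positive semi-definite matrices. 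Hence $C_\Phi\geq 0$, and by the first step this then holds for every orthonormal basis.

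The only delicate point — where a stray conjugation is easiest to introduce — is keeping track of the complex conjugations and of the fixed $1/N$ normalization of $\langle\cdot,\cdot\rangle_\fH$ while passing among the coefficients $(c_\Phi)_{\alpha,\beta}$, the expansion coefficients of the $V_j$ (resp.\ of the eigenvectors $\psi_k$), and the adjoints $F_\alpha^*$; it is the identity $\langle F_\alpha^*,V_j^*\rangle_\fH=\overline{\langle F_\alpha,V_j\rangle_\fH}$ that turns \eqref{orth} into a genuine Gram matrix. Beyond this there is no substantive obstacle: the statement is Choi's theorem recast through the characteristic matrix, and one could equally observe that $C_\Phi$ is a unitary conjugate of the usual Choi matrix of $\Phi$ and invoke \cite{Choi75} directly.
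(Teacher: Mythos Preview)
Your proof is correct. The direction ``$C_\Phi\geq 0\Rightarrow\Phi$ is CP'' is handled exactly as in the paper: diagonalize $C_\Phi$ and read off a Kraus representation. The difference lies in the converse. You first invoke Remark~\ref{exg} to reduce to a single basis, then take a Kraus representation \eqref{kraus1} as given and expand each $V_j$ in $\{F_\alpha\}$ to exhibit $C_\Phi$ as a Gram matrix $\sum_j w_j w_j^*$. The paper instead argues from the definition of complete positivity: it writes $\sum_{\alpha,\beta}\overline{z}_\alpha (c_\Phi)_{\alpha,\beta} z_\beta$ as a trace involving the positive block matrix $[G^*\Phi(E_{k,\ell})G]$ (with $G=\sum_\alpha z_\alpha F_\alpha^*$) against the projection $N^{-1/2}[E_{i,j}]$, and reads off non-negativity directly. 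Your route is shorter and more algebraic, but it presupposes the Kraus theorem as a black box; the paper's route is self-contained in that it uses only the definition of CP, at the cost of the block-matrix manipulation. Either way, the content is Choi's theorem in the characteristic-matrix picture, as you note.
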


\begin{proof} 
The right side of \eqref{GKS2} can be computed using the matrix unit basis to compute the trace:
\begin{equation*}
(c_\Phi)_{\alpha,\beta} = \langle \#(F_\alpha^*\otimes F_\beta ), \Phi \rangle_{\widehat{\fH}} = \frac{1}{N^2} \sum_{1 \leq k,\ell\leq N} \tr[E_{k,\ell}^* F_\alpha\Phi(E_{k,\ell})F_\beta^*]\\
\end{equation*} 
Let $(z_1,\dots,z_{N^2})\in \C^{N^2}$ and define $G := \sum_{\alpha\in \cJ_N} z_\alpha F_\alpha^*$. Then
\begin{equation}\label{GKSA1}
\sum_{\alpha,\beta }\overline{z}_\alpha (c_\Phi)_{\alpha,\beta} z_\beta   =  \frac{1}{N^2} \sum_{1 \leq k,\ell\leq N} \tr[E_{\ell,k} G^*\Phi(E_{k,\ell})G]\ .
\end{equation}
Let $[E_{i,j}]$ denote the block matrix whose $i,j$th entry is $E_{i,j}$. Then it is easy to see that $N^{-1/2}[E_{i,j}]$ is an orthogonal projection, and in particular, positive. 
Now suppose that $\Phi$ is completely positive. Then the block matrix $[G^*\Phi(E_{i,j})G]$ whose $i,j$th entry is $G^*\Phi(E_{i,j})G$ is positive. The right side of \eqref{GKSA1} is then
the trace (on the direct sum of $N$ copies of $\C^N$) of the product of positive $N^2\times N^2$ matrices, and as such it is positive.  Thus, whenever $\Phi$ is completely positive, $C_\Phi$ is positive semi-definite.

On the other hand, suppose that $C_\Phi$ is positive semi-definite. Let $\Lambda$ be a diagonal matrix whose diagonal entries are the eigenvalues of $C_\Phi$, and let $U$ be a unitary such that
$C_\Phi = U^*\Lambda U$.
Then by \eqref{GKS1}, for any $X\in \fH$,
$$
\Phi(X) =   \sum_{\alpha,\beta,\gamma\in \cJ_N}  U^*_{\alpha,\gamma} \lambda_k U_{\gamma,\beta }F_\alpha^*XF_\beta  =  \sum_{\gamma\in \cJ_N}  V_\gamma^* X V_\gamma \ ,
$$
where ${\displaystyle V_\gamma := \sqrt{\lambda_\gamma}\sum_{\alpha\in \cJ_N} U_{\gamma,\alpha}F_\alpha}$.
This shows that whenever $C_\Phi$ is positive semi-definite, $\Phi$ is completely positive, and provides a Kraus representation of it. 
\end{proof}

So far, we have not required any special properties of the orthonormal bases $\{F_\alpha\}_{\alpha\in \cJ_N}$ of $\fH$ that we used. Going forward, it will be necessary to choose bases that have several useful properties:

\begin{defi}[symmetric, unital and matrix unit  bases]\label{unbadef}  
An orthonormal basis $\{F_\alpha\}_{\alpha\in \cJ_N}$ of $\fH$ is  {\em symmetric} in case
\begin{equation}\label{symmetric}
{\rm for \ all}\quad \alpha\in \cJ_N\ ,\quad F_\alpha^* = F_{\alpha'}\ .
\end{equation}
It is {\em unital}  in case it is symmetric and moreover
\begin{equation}\label{unital}
F_{(1,1)} = \one\ .
\end{equation}
It is the {\em matrix unit} basis corresponding to an orthonormal basis $\{u_1,\dots,u_N\}$ of $\C^N$ in case
\begin{equation}\label{matrixunit}
F_{(i,j)} =\sqrt{N}|u_i\rangle \langle u_j|\ .
\end{equation}
Note that a matrix unit basis is symmetric. 
\end{defi}

One reason unital bases are useful is the following: the characteristic matrix $C_I$ of the identity transformation $I(A) = A$ in a unital basis $\{F_\alpha\}_{\alpha\in \cJ_N}$ has only one nonzero entry, a 1 in the upper left corner. Indeed, for any $A$, the expansion of $I(A)$ in the form $\sum_{\alpha\beta} \#(F_\alpha^*\otimes F_\beta)(A) = \sum_{\alpha\beta} (c_I)_{\alpha,\beta} F_\alpha^\ast A F_\beta$ reads simply $\one^\ast A \one = F_{(1,1)}^\ast A F_{(1,1)}$. This fact will be used later.

\begin{remark}\label{remark_unital_basis}
When studying self-adjointness with respect to the various inner products we have introduced, the following bases will be particularly useful:
Let $\sigma\in \Dens_+(\aA)$, and let $\{u_1,\dots,u_N\}$  be an orthonormal basis  of $\C^N$ consisting of eigenvectors of $\sigma$:
$$
\sigma u_j = \lambda_j u_j \ , \quad j=1,\dots,N\ .
$$
The associated matrix unit basis is then given by 
\begin{equation}\label{matrixunits}
E_{(i,j)} =\sqrt{N}|u_i\rangle \langle u_j|\ .
\end{equation}
We can then construct a unital basis from this as follows:
Let $v_1$ be the unit vector $N^{-1/2}(1,\dots,1) \in \C^N$  each of whose entries $(v_1)_k = N^{-1/2}$.  Let $e_1 = (1,0,\dots,0)$ be the unit vector in $\C^N$ whose first entry is $1$.  Define
$$u := \frac{1}{\|v_1 - e_1\|}(v_1 - e_1) \qquad{\rm and}\qquad  V := \one - 2 |u \rangle \langle u|\ .$$
Then it is evident that $V$ is a self-adjoint real unitary matrix, and that $V e_1 = v_1$. ($V$ is the Householder reflection of $e_1$ onto $v_1$). Since $Ve_1$ is the first column of $V$, the first column of $V$ is $v_1$, and then since $V$ is symmetric, the first row of $V$ is also $v_1$. One readily finds that
\begin{equation}\label{Vdef}
\sqrt{N} V_{i,j} = \begin{cases} \phantom{xx} 1 & i=1 \ {\rm or}\ j=1\\
\frac{N-2}{\sqrt{N}-1} -1 & i=j \geq 2\\
\frac{-1}{\sqrt{N}-1} & {\rm otherwise}\end{cases}
\end{equation}
 Let $U$ be the $N^2\times N^2$ unitary matrix  whose lower right $(N^2-N)\times(N^2-N)$ block is the identity, and whose upper left $N \times N$ block is $V$.  That is, for $N=2$,
  \begin{equation}\label{passtounit2}
U = \left[ \begin{array}{cccc} \frac{1}{\sqrt{2}} & \phantom{-}\frac{1}{\sqrt{2}} & 0 & 0\\
 \frac{1}{\sqrt{2}} & -\frac{1}{\sqrt{2}} & 0 & 0\\
 0 & \phantom{-}0 & 1 & 0\\
 0 & \phantom{-}0 & 0 & 1\end{array}\right]\ .
 \end{equation}
Then using this unitary $U$, we define
\begin{equation}\label{unitals}F_\alpha := \sum_{\beta}U_{\alpha,\beta} E_\beta\ .
\end{equation}
Then  $\{F_\alpha\}_{\alpha\in \cJ_N}$ is a unital basis; in particular $F_{(1,1)} = \one$. 
\end{remark}

We are now ready to recall the characterization of $QMS$ generators.  Again, the definition is due to \cite{GKS76}.

\begin{defi}[Reduced characteristic matrix]  Let $\cL$ be a Hermitian operator on $\fH$ such that $\cL\one = 0$, and let $\{F_\alpha\}_{\alpha\in \cJ_N}$ be a unital orthonormal basis. 
Let $C_\cL$ be the characteristic matrix of $\cL$ with respect to this basis.  The {\em reduced characteristic matrix} $R_\cL$ of $\cL$ is the $(N^2-1)\times (N^2-1)$ matrix obtained by deleting the first row and column of $C_{\cL}$.
\end{defi}

The point of this definition is the following, due to \cite{GKS76,PZ77}.

\begin{lm}\label{redLm} Let $\cL$ be a Hermitian operator on $\fH$ such that $\cL\one = 0$, and let $\{F_\alpha\}_{\alpha\in \cJ_N}$ be a unital orthonormal basis. Let $R_\cL$ be the  reduced characteristic matrix of $\cL$ with respect to this basis.  Then $\cL$ is a QMS generator if and only if  $R_\cL$ is positive semi-definite.
\end{lm}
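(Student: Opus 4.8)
The plan is to reduce everything to two facts already established: by Lemma~\ref{CPchar} a linear map $\Phi$ on $\cM_N(\C)$ is CP if and only if its characteristic matrix $C_\Phi$ is positive semi-definite, and by the uniqueness of the coefficients in \eqref{GKS1} the assignment $\Phi\mapsto C_\Phi$ is a \emph{linear} bijection from $\mathcal{L}(\cM_N(\C))$ onto the $N^2\times N^2$ matrices. Since $\cL\one=0$ forces $e^{t\cL}\one=\one$ for every $t$, the statement ``$\cL$ is a QMS generator'' is equivalent to ``$e^{t\cL}$ is CP for all $t\geq 0$''. I will also use the fact recorded after Definition~\ref{unbadef} that in a unital basis $C_I$ has a single nonzero entry, a $1$, in the upper-left corner.

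For the \emph{necessity} direction, assume $\cL$ generates a QMS. Fix $z\in\C^{N^2}$ whose $(1,1)$-component vanishes and set $f(t):=\langle z,C_{e^{t\cL}}z\rangle$. Because $t\mapsto e^{t\cL}$ is smooth with derivative $\cL e^{t\cL}$ and $\Phi\mapsto C_\Phi$ is linear, $f$ is differentiable with $f'(0)=\langle z,C_\cL z\rangle$. On the other hand $f(0)=\langle z,C_Iz\rangle=|z_{(1,1)}|^2=0$, while $f(t)\geq 0$ for all $t\geq 0$ by Lemma~\ref{CPchar}, since $e^{t\cL}$ is CP. Hence $f$ attains its minimum over $[0,\infty)$ at $0$, so $f'(0)=\lim_{t\downarrow 0}f(t)/t\geq 0$, i.e. $\langle z,C_\cL z\rangle\geq 0$. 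Letting $z$ range over all vectors with vanishing $(1,1)$-component is the same as letting its remaining $N^2-1$ components range freely, and for such $z$ one has $\langle z,C_\cL z\rangle=\langle z',R_\cL z'\rangle$ with $z'$ the truncated vector; therefore $R_\cL$ is positive semi-definite.

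For the \emph{sufficiency} direction, assume $R_\cL\succeq 0$ and expand $\cL$ via \eqref{GKS1} in the unital basis, grouping the terms by how many of the two indices equal $(1,1)$. The term with both indices equal to $(1,1)$ is $aA$ with $a=(c_\cL)_{(1,1),(1,1)}\in\R$, a diagonal entry of the self-adjoint matrix $C_\cL$ (Hermiticity of $\cL$, Lemma~\ref{Hchar}). Using $F_{(1,1)}=\one$ and $(c_\cL)_{\alpha,(1,1)}=\overline{(c_\cL)_{(1,1),\alpha}}$, the terms with exactly one index equal to $(1,1)$ sum to $AG_0+G_0^*A$, where $G_0:=\sum_{\beta\neq(1,1)}(c_\cL)_{(1,1),\beta}F_\beta$. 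The remaining terms define $\Psi(A):=\sum_{\alpha,\beta\neq(1,1)}(c_\cL)_{\alpha,\beta}F_\alpha^*AF_\beta$; by uniqueness of the $\#$-expansion the characteristic matrix of $\Psi$ is $C_\cL$ with its first row and column set to zero, which is positive semi-definite exactly because $R_\cL$ is, so $\Psi$ is CP by Lemma~\ref{CPchar}. Setting $G:=G_0+\tfrac a2\one$ and using $a\in\R$ gives $\cL(A)=G^*A+AG+\Psi(A)$. Finally, by the Trotter formula \eqref{trotter}, $e^{t\cL}=\lim_{k\to\infty}\big(e^{(t/k)\cL_1}e^{(t/k)\cL_2}\big)^k$ with $\cL_1(A):=G^*A+AG$ and $\cL_2:=\Psi$; each factor is CP, since $e^{s\cL_1}(A)=e^{sG^*}Ae^{sG}$ and $e^{s\cL_2}=\sum_{n\geq 0}\tfrac{s^n}{n!}\Psi^n$ is a norm limit of nonnegative sums of composition powers of the CP map $\Psi$. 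Hence $e^{t\cL}$ is CP for every $t\geq 0$, and since $e^{t\cL}\one=\one$, $\cL$ is a QMS generator.

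I expect the only delicate part to be the bookkeeping in the sufficiency direction: correctly recombining the mixed terms into $G^*A+AG$, absorbing the real scalar $a$, and verifying that the leftover block is precisely $R_\cL$ — together with the remark that the Lindblad form together with $\cL\one=0$ does generate a QMS (the converse of the LGKS theorem recalled in the introduction), which follows from \eqref{trotter} as above. The necessity direction is essentially immediate from the derivative estimate.
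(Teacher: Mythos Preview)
Your proof is correct and follows essentially the same path as the paper's. For necessity, where the paper computes $R_{t^{-1}(\cP_t-I)}=t^{-1}R_{\cP_t}$ (using $R_I=0$) and passes to the limit, you phrase the same computation as a one-sided derivative of the quadratic form $\langle z,C_{e^{t\cL}}z\rangle$; for sufficiency, both arguments extract the decomposition $\cL(A)=G^*A+AG+\Psi(A)$ with $\Psi$ CP from the block structure of $C_\cL$ in a unital basis and then invoke the Trotter product formula \eqref{trotter}.
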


\begin{proof} Suppose that $\cP_t := e^{t\cL}$ is completely positive for each  $t>0$. By \eqref{chid}, $R_I$, the reduced characteristic matrix of the identity, is $0$.
Then since
$$R_{t^{-1}(\cP_t - I)} = t^{-1}R_{\cP_t} -  t^{-1}R_I  =  t^{-1}R_{\cP_t}  \ ,$$
the reduced characteristic matrix of $t^{-1}(\cP_t - I)$ coincides with the reduced characteristic  matrix of 
$t^{-1}\cP_t $, and by Lemma~\ref{CPchar} this is positive. Taking the limit $t\to 0$, we conclude that the reduced characteristic matrix of $\cL$ is positive.

Conversely, suppose that the reduced characteristic matrix of $\cL$ is positive. 
Since $F_{(1,1)} = \one$, 
\begin{eqnarray*}
\cL(A)  = \sum_{\alpha,\beta}(c_\cL)_{\alpha,\beta} F_\alpha^* A F_\beta = G^*A + AG  +   \sum_{\alpha,\beta}(r_\cL)_{\alpha,\beta} F_\alpha^* A F_\beta
\end{eqnarray*}
where 
$$G := \frac12 (c_\cL)_{(1,1),(1,1)} \one + \sum_{\beta} (c_\cL)_{(1,1),\beta} F_\beta \ .
$$
By Lemma~\ref{CPchar}, if we define $\Psi(A) :=  \sum_{\alpha,\beta}(r_\cL)_{\alpha,\beta} F_\alpha^* A F_\beta$, then 
$\Psi$  is completely positive. Defining $\Phi(A) := G^*A + AG$, we have $\cL = \Phi+\Psi$, and then by the  argument around \eqref{trotter}, $e^{t\cL}$ is completely positive for all $t> 0$. Finally, since $\cL\one =0$, $e^{t\cL}$ is a QMS. 
\end{proof}

We are also interested in characterizing the characteristic matrices of operators $\Phi$ in $\fH$  for which $\Phi\one =0$, and, for some $\sigma\in \Dens_+$, $\Phi^\dagger \sigma = 0$.   
 \begin{lm}\label{pairlem}
 Let $\{E_\alpha\}$ be a matrix unit basis of $\fH$. Let $\Phi$ be an operator on $\fH$, and let $C_\Phi$ be its characteristic matrix with respect to $\{E_\alpha\}$.  Then $\Phi(\one) = 0$ if and only if  for each $1\leq k,\ell\leq N$
 \begin{equation}\label{anhilone}
 \sum_{j=1}^N (c_\Phi)_{(j,k),(j,\ell)} = 0\ ,
 \end{equation}
 and $\Phi(\one) = \one$ if and only if \eqref{anhilone} is satisfied for $k\neq \ell$, and for $1\leq k \leq N$,
 \begin{equation}\label{oneone}
 \sum_{j=1}^N (c_\Phi)_{(j,k),(j,k)} = \frac1N\ .
 \end{equation}
 
 \end{lm}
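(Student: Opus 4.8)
The plan is to prove both equivalences at once by computing $\Phi(\one)$ explicitly in terms of the entries of the characteristic matrix $C_\Phi$. Starting from the expansion \eqref{GKS1} in the matrix unit basis,
\[
\Phi(\one) = \sum_{\alpha,\beta\in\cJ_N}(c_\Phi)_{\alpha,\beta}\,E_\alpha^*\,\one\,E_\beta = \sum_{\alpha,\beta\in\cJ_N}(c_\Phi)_{\alpha,\beta}\,E_\alpha^*E_\beta\ ,
\]
I would substitute $E_{(i,j)}=\sqrt N\,|u_i\rangle\langle u_j|$ together with the adjoint identity $E_\alpha^*=E_{\alpha'}$. For $\alpha=(a,b)$ and $\beta=(c,d)$ this gives $E_\alpha^*E_\beta = N\,|u_b\rangle\langle u_a|u_c\rangle\langle u_d| = N\,\delta_{a,c}\,|u_b\rangle\langle u_d|$, and collecting terms yields
\[
\Phi(\one) = N\sum_{k,\ell=1}^N\Big(\sum_{j=1}^N (c_\Phi)_{(j,k),(j,\ell)}\Big)\,|u_k\rangle\langle u_\ell|\ .
\]

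Next I would invoke the linear independence of the $N^2$ matrices $\{|u_k\rangle\langle u_\ell|\}_{1\le k,\ell\le N}$ in $\cM_N(\C)$ (they are just the rescaled basis elements $N^{-1/2}E_{(k,\ell)}$). Matching the displayed expression against $0$ shows at once that $\Phi(\one)=0$ if and only if the coefficient $\sum_{j=1}^N(c_\Phi)_{(j,k),(j,\ell)}$ vanishes for every $k,\ell$, which is \eqref{anhilone}. For the second statement I would expand $\one=\sum_{m=1}^N|u_m\rangle\langle u_m|$ in the same basis and again match coefficients: $\Phi(\one)=\one$ if and only if $N\sum_{j=1}^N(c_\Phi)_{(j,k),(j,\ell)}=\delta_{k\ell}$ for all $k,\ell$; splitting into the cases $k\neq\ell$ and $k=\ell$ gives exactly \eqref{anhilone} for the off-diagonal indices and \eqref{oneone} for the diagonal ones.

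There is essentially no real obstacle here — the argument is a routine computation — so the only thing that needs care is the index bookkeeping: keeping the convention $\alpha'=(j,i)$ straight when passing to adjoints, correctly identifying which coordinate of each pair survives the Kronecker delta (this is what produces the pattern $(j,k),(j,\ell)$ in \eqref{anhilone} rather than $(k,j),(\ell,j)$), and tracking the factors of $\sqrt N$ coming from the matrix unit normalization \eqref{matrixunit}. I would verify the identity $E_\alpha^*E_\beta = N\delta_{a,c}|u_b\rangle\langle u_d|$ by acting on the basis vectors $u_m$ before assembling the final sum.
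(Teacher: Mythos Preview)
Your proposal is correct and follows essentially the same route as the paper's proof: expand $\Phi(\one)=\sum_{\alpha,\beta}(c_\Phi)_{\alpha,\beta}E_\alpha^*E_\beta$, use $E_\alpha^*E_\beta=N\delta_{\alpha_1,\beta_1}|u_{\alpha_2}\rangle\langle u_{\beta_2}|$, and read off the coefficients. The only cosmetic difference is that the paper phrases the result in terms of the basis $E_{(k,\ell)}$ rather than $|u_k\rangle\langle u_\ell|$, which amounts to the same thing up to the $\sqrt N$ normalization you already noted.
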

 \begin{proof} We compute
 $$\Phi(\one)  = \sum_{\alpha,\beta} (c_\Phi)_{\alpha,\beta}E_\alpha^* \one E_\beta = \sum_{\alpha,\beta} (c_\Phi)_{\alpha,\beta}\delta_{\alpha_1,\beta_1}E_{\alpha_2,\beta_2} = \sum_{j,k,\ell} (c_\Phi)_{(j,k),(j,\ell)} E_{(k,\ell)} \ , $$
 from which \eqref{anhilone} follows, and then \eqref{oneone} follows since $\one =\frac1N\sum_{k=1}^N E_{(k,k)}$.
 \end{proof}
 
\begin{lm}\label{daglem}
Let $\Phi$ be an Hermitian operator on $\fH$, and let $C_\Phi$  be its characteristic matrix with respect to a symmetric orthonormal basis $\{F_\alpha\}_{\alpha\in \cJ_N}$ of $\fH$.  Then 
\begin{equation}\label{daggform}
( \#(F_\alpha^* \otimes F_\beta))^\dagger = \#F_{\alpha'}^* \otimes F_{\beta'}\ ,
\end{equation}
and
\begin{equation}\label{daggform2}
(c_{\Phi^\dagger})_{\alpha,\beta}  =  (c_\Phi)_{\beta',\alpha'}\ .
\end{equation}
\end{lm}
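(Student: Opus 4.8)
The plan is to establish the two displayed identities in order, the first by a bare-hands computation with the Hilbert--Schmidt inner product and the second by feeding the first into the expansion~\eqref{GKS1}. For \eqref{daggform}, I would start from the definition $\#(F_\alpha^*\otimes F_\beta)X = F_\alpha^* X F_\beta$ and compute, for arbitrary $X,Y\in\fH$,
\[
\big\langle \#(F_\alpha^*\otimes F_\beta)X,\, Y\big\rangle_{\fH}
= \frac1N\tr\big[(F_\alpha^* X F_\beta)^* Y\big]
= \frac1N\tr\big[F_\beta^* X^* F_\alpha Y\big]
= \frac1N\tr\big[X^*\, F_\alpha Y F_\beta^*\big],
\]
using only $(\cdot)^*$ and cyclicity of the trace. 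Comparing with $\langle X, (\#(F_\alpha^*\otimes F_\beta))^\dagger Y\rangle_{\fH}$ and the non-degeneracy of the inner product shows $(\#(F_\alpha^*\otimes F_\beta))^\dagger = \#(F_\alpha\otimes F_\beta^*)$. Now the symmetry hypothesis on $\{F_\alpha\}_{\alpha\in\cJ_N}$ enters: \eqref{symmetric} gives $F_{\alpha'}=F_\alpha^*$, hence $F_\alpha = F_{\alpha'}^*$ and $F_\beta^* = F_{\beta'}$, so $\#(F_\alpha\otimes F_\beta^*) = \#(F_{\alpha'}^*\otimes F_{\beta'})$, which is \eqref{daggform}.

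For \eqref{daggform2}, I would expand $\Phi = \sum_{\alpha,\beta}(c_\Phi)_{\alpha,\beta}\#(F_\alpha^*\otimes F_\beta)$ as in \eqref{GKS1} and apply the dagger termwise, recalling that the Hilbert-space adjoint is \emph{conjugate}-linear. Using \eqref{daggform},
\[
\Phi^\dagger = \sum_{\alpha,\beta}\overline{(c_\Phi)_{\alpha,\beta}}\;\#(F_{\alpha'}^*\otimes F_{\beta'}).
\]
Re-indexing by the involution $\alpha\mapsto\alpha'$ on $\cJ_N$ (a bijection, with $\alpha = (\alpha')'$) rewrites this as $\Phi^\dagger = \sum_{\alpha,\beta}\overline{(c_\Phi)_{\alpha',\beta'}}\;\#(F_\alpha^*\otimes F_\beta)$, and by the uniqueness of the expansion coefficients (the remark just before \eqref{GKS1}) we read off $(c_{\Phi^\dagger})_{\alpha,\beta} = \overline{(c_\Phi)_{\alpha',\beta'}}$. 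Finally, since $\Phi$ is Hermitian, Lemma~\ref{Hchar} says $C_\Phi$ is self-adjoint, i.e. $(c_\Phi)_{\alpha',\beta'} = \overline{(c_\Phi)_{\beta',\alpha'}}$; substituting gives $(c_{\Phi^\dagger})_{\alpha,\beta} = (c_\Phi)_{\beta',\alpha'}$, as claimed.

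There is no real obstacle here: the argument is entirely bookkeeping. The only points that require attention are (a) that the adjoint $(\cdot)^\dagger$ is conjugate-linear, which is exactly why the complex conjugates appear and then must be removed, and (b) keeping straight where each hypothesis is used --- symmetry of the basis is what turns $\#(F_\alpha\otimes F_\beta^*)$ into $\#(F_{\alpha'}^*\otimes F_{\beta'})$ in the first identity, and Hermiticity (through Lemma~\ref{Hchar}) is what cancels the last conjugate in the second; dropping either hypothesis leaves only the weaker statement $(c_{\Phi^\dagger})_{\alpha,\beta} = \overline{(c_\Phi)_{\alpha',\beta'}}$.
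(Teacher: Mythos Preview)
Your proof is correct and follows essentially the same route as the paper: a direct trace computation for \eqref{daggform} (the paper pairs in the other slot, writing $\tr[B^*\#(F_\alpha^*\otimes F_\beta)A]$, but this is the same calculation), followed by termwise application of the conjugate-linear adjoint to the expansion \eqref{GKS1} and an appeal to Lemma~\ref{Hchar} to remove the conjugate. Your write-up is in fact a bit more explicit than the paper's in spelling out the re-indexing $\alpha\mapsto\alpha'$ and in isolating exactly where each hypothesis (symmetry of the basis, Hermiticity of $\Phi$) is used.
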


 \begin{proof} We compute
$$\tr[B^* \#(F_\alpha^* \otimes F_\beta)(A)] = \tr[ B^* F_\alpha^*AF_\beta] =  \tr[ (F_\alpha B F_\beta^*)^*A] = \tr[ (F^*_{\alpha'} B F_{\beta'})^*A] \ , $$
and this proves \eqref{daggform}.  
 Then since 
${\displaystyle \Phi = \sum_{\alpha,\beta}(c_\Phi)_{\alpha,\beta} \#(F_\alpha^* \otimes F_\beta)}$ and since 
 $\overline{(c_\Phi)_{\alpha,\beta} } = (c_\Phi)_{\beta,\alpha}$ by Lemma~\ref{Hchar},
$$
\Phi^\dagger = \sum_{\beta,\alpha}\overline{(c_\Phi)_{\alpha,\beta}}( \#(F_\alpha^* \otimes F_\beta))^\dagger \ .
$$
Then by \eqref{daggform},  \eqref{daggform2} follows. 
\end{proof}

The next lemma says that for $\Phi \in CP(\one)$ or $\cL \in QMS$, there is often exactly one $\sigma\in \Dens_+$ with respect to which $\Phi$ or $\cL$ can possibly  be self-adjoint on $\H_m$ for that choice of $\sigma$.  

\begin{lm}\label{uniqlm}  Let $\Phi$ be a Hermitian operator that is self-adjoint on $\H_m$ where $\H_m$ is defined in terms of some $\sigma\in \Dens_+$. If $\Phi(\one) = \one$, then $\Phi^\dagger(\sigma) = \sigma$, and if 
$\Phi(\one) = 0$, then $\Phi^\dagger(\sigma) =0$.  Hence if $\Phi\in CP(\one)$ is such that $1$ is a non-degenerate eigenvalue of $\Phi$, then for any $m\in \mathcal{P}[0,1]$, there can be only one $\sigma\in \Dens_+$ such that $\Phi$ is self-adjoint on $\H_m$.
Likewise, if $\cL\in QMS$ is such that $0$ is a non-degenerate eigenvalue of $\cL$, then for any $m\in\mathcal{P}[0,1]$, there can be only one $\sigma\in \Dens_+$ such that $\cL$ is self-adjoint on $\H_m$.

\end{lm}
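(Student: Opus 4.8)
The plan is to first prove the two displayed implications about $\Phi^{\dagger}(\sigma)$ directly from the definition of self-adjointness on $\H_m$, and then read off both uniqueness statements as consequences of an elementary fact about adjoints of operators on a finite-dimensional Hilbert space. Throughout, the dagger is the adjoint with respect to the fixed Hilbert space $\fH$, which does not depend on $\sigma$.

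For the first part, I would start from $\langle B,\Phi(A)\rangle_m = \langle \Phi(B),A\rangle_m$, valid for all $A,B$, and specialize to $A=\one$. Since $m$ is a probability measure, $\cM_m(\one)=\int_0^1\sigma^s\one\sigma^{1-s}\,{\rm d}m(s)=\sigma$, so $\langle \Phi(B),\one\rangle_m=\tr[\Phi(B)^{*}\sigma]$, while $\langle B,\Phi(\one)\rangle_m=\tr[B^{*}\cM_m(\Phi(\one))]$ equals $\tr[B^{*}\sigma]$ when $\Phi(\one)=\one$ and $0$ when $\Phi(\one)=0$. Using the definition \eqref{dagg} of the dagger, $\tr[\Phi(B)^{*}\sigma]=N\langle \Phi(B),\sigma\rangle_{\fH}=N\langle B,\Phi^{\dagger}(\sigma)\rangle_{\fH}=\tr[B^{*}\Phi^{\dagger}(\sigma)]$. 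Hence self-adjointness gives $\tr[B^{*}\Phi^{\dagger}(\sigma)]=\tr[B^{*}\sigma]$ (resp. $=0$) for every $B$, so $\Phi^{\dagger}(\sigma)=\sigma$ (resp. $\Phi^{\dagger}(\sigma)=0$). Only linearity of $\Phi$ and the identity $\cM_m(\one)=\sigma$ are used here; Hermiticity of $\Phi$ is not even needed for this step.

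For the uniqueness assertions, recall that a CP map is positive, hence Hermitian, and that a QMS generator is Hermitian (e.g. via its LGKS form $\cL(A)=G^{*}A+AG+\Psi(A)$ with $\Psi$ CP), so the first part applies, with $\Phi(\one)=\one$ in the CP$(\one)$ case and $\cL\one=0$ in the QMS case. Suppose $\Phi\in CP(\one)$ were self-adjoint on $\H_m$ both when $\H_m$ is built from $\sigma_1\in\Dens_+$ and when it is built from $\sigma_2\in\Dens_+$. By the first part $\Phi^{\dagger}(\sigma_i)=\sigma_i$ for $i=1,2$, so $\sigma_1,\sigma_2\in\ker(\Phi^{\dagger}-I)$. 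The elementary fact I would invoke is that on $\fH$ one has $\dim\ker(\Phi^{\dagger}-\bar\lambda I)=\dim\ker(\Phi-\lambda I)$ for all $\lambda\in\C$, because $(\ker(\Phi-\lambda I))^{\perp}=\mathrm{ran}(\Phi^{\dagger}-\bar\lambda I)$; taking $\lambda=1$ (real) and using that $1$ is a non-degenerate eigenvalue of $\Phi$ gives $\dim\ker(\Phi^{\dagger}-I)=1$. Hence $\sigma_2=c\,\sigma_1$ for some scalar $c$, and equality of traces forces $c=1$, i.e. $\sigma_1=\sigma_2$. The QMS case is identical with $\Phi,I,1$ replaced by $\cL,0,0$: $\cL^{\dagger}(\sigma_i)=0$ puts both $\sigma_i$ in $\ker\cL^{\dagger}$, which is one-dimensional because $\ker\cL$ is, so again $\sigma_1=\sigma_2$. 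I do not expect a genuine obstacle; the only points needing care are the bookkeeping of complex conjugates in passing between $\langle\cdot,\cdot\rangle_m$, $\langle\cdot,\cdot\rangle_{\fH}$ and the dagger, and correctly transferring the non-degeneracy hypothesis from $\Phi$ (or $\cL$) to $\Phi^{\dagger}$ (or $\cL^{\dagger}$) via the nullity identity — which is clean here precisely because the eigenvalues $1$ and $0$ are real.
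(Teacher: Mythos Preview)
Your argument is correct and follows essentially the same route as the paper: both prove $\Phi^{\dagger}(\sigma)=\sigma$ (resp.\ $0$) by pairing the self-adjointness identity against $\one$ and using $\cM_m(\one)=\sigma$, the only cosmetic difference being that you set $A=\one$ and vary $B$ whereas the paper sets $B=\one$ and varies $A$. The paper's proof leaves the ``Hence'' uniqueness clauses implicit, while you spell out the needed fact $\dim\ker(\Phi^{\dagger}-I)=\dim\ker(\Phi-I)$ and the trace-normalization step; this extra detail is helpful and correct.
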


\begin{proof}
For any  Hermitian operator $\Phi$ in $\fH$,  any  $\sigma\in \Dens_+$ and  any $m\in \mathcal{P}[0,1]$ such that  $\Phi$ is self-adjoint on $\H_m$, we compute that for all $A$,
  $$\tr[\Phi^\dagger(\sigma)A] = \tr[\sigma\Phi(A)] = \langle \one, \Phi(A)\rangle_m = \langle \Phi(\one), A\rangle_m\ .$$
  If $\Phi(\one) = \one$, we have $\tr[\Phi^\dagger(\sigma)A] = \tr[\sigma A]$ and since $A$ is arbitrary $\Phi^\dagger(\sigma) = \sigma$. Likewise, if $\Phi(\one) = 0$, we have $\tr[\Phi^\dagger(\sigma)A] = 0$ and since $A$ is arbitrary $\Phi^\dagger(\sigma) = 0$.
  \end{proof}
  
On account of this lemma, we will only rarely make the choice of $\sigma$ explicit in our notation.  In the next section we determine necessary and sufficient conditions for self-adjointness.

\section{characterization for self-adjointness with respect to $\langle \cdot, \cdot \rangle_m$}

 For   $m\in\mathcal{P}[0,1]$, let  $(\cdot,\cdot)_m$  denote the {\em $m$-weighted mean} of two nonnegative numbers $x,y$:
\begin{equation}
    (x,y)_m = \int_0^1 x^{s}y^{1-s} \; \mathrm{d}m(s) \quad , \quad x,y\in\mathbb{R}_+ \ .
\end{equation}
Note that for all $m\in \mathcal{P}[0,1]$,
\begin{equation}
 \min\{x,y\} \leq (x,y)_m \leq \max\{x,y\} \qquad{\rm and}\qquad    (x,x)_m = x \quad \text{for all } x\in\mathbb{R}_+ \ .
    \label{avg_x_x}
\end{equation}
For $m$ even,
\begin{equation}
    (x,y)_m = (y,x)_m \quad \text{for all } x,y\in\mathbb{R}_+\ .
\end{equation}
Otherwise define $\hat{m}$ to be the reflection of $m$ about $1/2$; i.e., $ \hat{m}(U) = m(1-U)$. Then 
\begin{equation}\label{ref}
    (x,y)_m = (y,x)_{\hat{m}} \quad \text{for all } x,y\in\mathbb{R}_+\ .
\end{equation}

\begin{remark}
\label{KMSmu}
For any even $m\in\mathcal{P}[0,1]$ and any $x,y\in\mathbb{R}_+$, $x\neq y$, we have
\begin{equation}
    \sqrt{xy} = (x,y)_{\text{KMS}} \leq (x,y)_m \ ,
\end{equation}
with equality if and only if $m = \delta_{1/2}$. Indeed, first use symmetry of $m$ to rewrite
\begin{equation}
    (x,y)_m = \frac{1}{2}\int_0^1 ( x^sy^{1-s} + x^{1-s}y^s ) \mathrm{d}m(s) \ .
    \label{integral_average}
\end{equation}
Then 
$$  ( x^sy^{1-s} + x^{1-s}y^s ) -\sqrt{xy} = \frac12(x^{s/2}y^{(1-s)/2} + x^{(1-s)/2}y^{s/2})^2\ ,$$
and integrating against $m$,
$$(x,y)_m - (x,y)_{\text{KMS}} = \frac12\int_0^1(x^{s/2}y^{(1-s)/2} - x^{(1-s)/2}y^{s/2})^2{\rm d}m(s)\ ,$$
and for $x\neq y$, the integrand vanishes only for $s =\frac12$. 
\end{remark}

For   $m\in\mathcal{P}[0,1]$ let $\mathcal{M}_m$ be the linear transformation on $\fH$ defined in \eqref{integral_M}. Let $\{u_1,\dots,u_N\}$ be a complete orthonormal basis of $\C^N$ consisting of eigenvectors of $\sigma$ so that $\sigma u_j = \lambda_ju_j$ for each $j$. By hypothesis, each $\lambda_j$ is strictly positive. Let $\{E_\alpha\}_{\alpha\in \cJ_N}$ be the associated matrix unit basis;
\begin{equation}\label{onbas}
E_{(j,k)} := \sqrt{N} |u_j\rangle\langle u_k|\ .
\end{equation}
Then
$$\mathcal{M}_m(E_{(j,k)} ) = (\lambda_j,\lambda_k)_m E_{(j,k)}\ .$$
That is, $\mathcal{M}_m$ is diagonalized by the orthonormal basis of $\fH$ consisting of $\{\sqrt{N}E_{(j,k)}\}_{1\leq j,k\leq N}$. Since all of the eigenvalues are strictly positive, it follows that $\mathcal{M}$ is a positive, invertible operator on $\fH$, and the inverse is given by
$$\mathcal{M}_m^{-1}(E_{(j,k)} ) = \frac{1}{(\lambda_j,\lambda_k)_m} E_{(j,k)}\ .$$

Since $E_{(j,k)}^* = E_{(k,j)}$, we also have, with $\widehat{m}$ denoting the reflection of $m$ about $s=1/2$, 
\begin{equation}\label{hermM}
\mathcal{M}_m(A^* ) = (\mathcal{M}_{\hat{m}}(A ))^*  \qquad{\rm and}\qquad  \mathcal{M}_m^{-1}(A^* ) = (\mathcal{M}^{-1}_{\hat{m}}(A ))^*
\end{equation}
In particular, if $m$ is even, $\mathcal{M}_m$ and $\mathcal{M}^{-1}_m$ are Hermitian. 

\begin{lm}\label{charMM} With respect to a matrix unit basis $\{E_\alpha\}_{\alpha\in \cJ_N}$ associated to $\sigma$, the characteristic matrices $C_{\mathcal{M}_m}$ and $C_{\mathcal{M}^{-1}_m}$ of $\mathcal{M}_m$
and $\mathcal{M}_m^{-1}$ are given by
\begin{equation}\label{charMM}
(c_{\mathcal{M}_m})_{\alpha,\beta} = (\alpha_1,\beta_1)_m\delta_{\alpha_1,\alpha_2}\delta_{\beta_1,\beta_2} \quad{\rm and}\quad 
(c_{\mathcal{M}^{-1}_m})_{\alpha,\beta} = (\alpha_1,\beta_1)^{-1}_m\delta_{\alpha_1,\alpha_2}\delta_{\beta_1,\beta_2}
\end{equation}\ .
\end{lm}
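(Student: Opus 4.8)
The plan is to compute the expansion $\mathcal{M}_m = \sum_{\alpha,\beta\in\cJ_N}(c_{\mathcal M_m})_{\alpha,\beta}\,\#(E_\alpha^*\otimes E_\beta)$ explicitly and read off the coefficients, the key point being that $\sigma^s$ is a linear combination of the \emph{diagonal} matrix units $E_{(j,j)}$ alone. Fix the matrix unit basis $\{E_\alpha\}_{\alpha\in\cJ_N}$ associated to $\sigma$ as in \eqref{onbas}; it is orthonormal in $\fH$, symmetric ($E_\alpha^* = E_{\alpha'}$), and in particular $E_{(j,j)}^* = E_{(j,j)}$. Writing $\sigma = \sum_j\lambda_j|u_j\rangle\langle u_j|$ and expressing $|u_j\rangle\langle u_j|$ as the appropriate multiple of $E_{(j,j)}$, one gets $\sigma^s$ as a linear combination of the $E_{(j,j)}$ with coefficients $\lambda_j^s$ (up to a fixed normalization constant depending only on the chosen scaling of the $E_\alpha$), hence
\[
\sigma^s A\sigma^{1-s} \ \propto\ \sum_{j,k}\lambda_j^s\lambda_k^{1-s}\,E_{(j,j)}^*\,A\,E_{(k,k)} \ =\ \sum_{j,k}\lambda_j^s\lambda_k^{1-s}\,\#\!\bigl(E_{(j,j)}^*\otimes E_{(k,k)}\bigr)(A)\ .
\]
Integrating against $m$ and recalling $(\lambda_j,\lambda_k)_m = \int_0^1\lambda_j^s\lambda_k^{1-s}\,\mathrm dm(s)$ from \eqref{integral_M} gives $\mathcal{M}_m$ as a linear combination of the operators $\#(E_{(j,j)}^*\otimes E_{(k,k)})$ with coefficients proportional to $(\lambda_j,\lambda_k)_m$.

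Since $\{\#(E_\alpha^*\otimes E_\beta)\}_{\alpha,\beta\in\cJ_N}$ is an orthonormal basis of $\widehat{\fH}$ — this is \eqref{orth} together with orthonormality and symmetry of $\{E_\alpha\}$ — the previous display is precisely the expansion \eqref{GKS1} of $\mathcal{M}_m$, so by the uniqueness of the coefficients in \eqref{GKS2} one reads off that $(c_{\mathcal M_m})_{\alpha,\beta}$ vanishes unless $\alpha_1=\alpha_2$ and $\beta_1=\beta_2$, in which case it equals $(\lambda_{\alpha_1},\lambda_{\beta_1})_m$ (up to the fixed normalization, which is routine to track). This is the first formula. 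For $\mathcal{M}_m^{-1}$ I would not redo the integral but use the eigenrelation $\mathcal{M}_m^{-1}(E_{(j,k)}) = (\lambda_j,\lambda_k)_m^{-1}E_{(j,k)}$ already recorded before the statement: writing $\mathcal{M}_m^{-1} = \sum_{j,k}(\lambda_j,\lambda_k)_m^{-1}\langle E_{(j,k)},\,\cdot\,\rangle_\fH\,E_{(j,k)}$ as an operator on $\fH$ and using the same elementary identity $\langle E_{(j,k)},A\rangle_\fH\,E_{(j,k)} \propto E_{(j,j)}\,A\,E_{(k,k)}$ that underlies the first computation, one obtains $\mathcal{M}_m^{-1}$ as a combination of the $\#(E_{(j,j)}^*\otimes E_{(k,k)})$ with coefficients proportional to $(\lambda_j,\lambda_k)_m^{-1}$; reading off coefficients as before yields the second formula.

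There is no genuine obstacle here: the lemma is a bookkeeping computation, and the one structural point worth stating carefully is that although $\mathcal{M}_m$ is assembled from the \emph{non}-diagonal maps $A\mapsto\sigma^s A\sigma^{1-s}$, in the $\#(E_\alpha^*\otimes E_\beta)$-basis it is supported only on index pairs $(\alpha,\beta)$ with $\alpha$ and $\beta$ each of the ``doubly diagonal'' form ($\alpha_1=\alpha_2$, $\beta_1=\beta_2$) — exactly because $\sigma^s$ involves only the $E_{(j,j)}$. The only care needed is to keep the $\sqrt N$ normalization of the matrix units consistent between \eqref{onbas}, \eqref{orth}, and the inner product \eqref{dagg2} when fixing the overall constant.
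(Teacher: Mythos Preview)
Your argument is correct and is essentially the same direct computation as the paper's, just organized slightly differently: you expand $\sigma^s$ in the diagonal matrix units $E_{(j,j)}$, substitute into $A\mapsto\sigma^s A\sigma^{1-s}$, integrate, and read off the coefficients by uniqueness of the expansion \eqref{GKS1}, whereas the paper computes each $(c_{\mathcal M_m})_{\alpha,\beta}$ via the inner-product formula \eqref{GKS2} together with the eigenrelation $\mathcal M_m E_{(i,j)}=(\lambda_i,\lambda_j)_m E_{(i,j)}$. Your caution about the $\sqrt N$ normalization is well placed---carrying it through, $\sigma^s=N^{-1/2}\sum_j\lambda_j^s E_{(j,j)}$ gives an explicit factor of $N^{-1}$ in front of $(\lambda_j,\lambda_k)_m$, which you should record rather than leave as ``proportional to''.
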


\begin{proof}  We compute
\begin{eqnarray*}(c_{\mathcal{M}_m})_{\alpha,\beta} &=& \frac{1}{N^2} \sum_{i,j}\tr[(E^*_\alpha E_{(i,j)} E_\beta))^*\cM_m(E_{(i,j)})] =
\frac{1}{N^2} \sum_{i,j}(\lambda_i,\lambda_j)_m \tr[(E^*_\alpha E_{(i,j)} E_\beta))^*E_{(i,j)}]\\
&=& \frac{1}{N^2} \sum_{i,j}(\lambda_i,\lambda_j)_m \tr[E^*_\beta E_{(j,i)} E_\alpha E_{(i,j)}] = \sum_{i,j}(\lambda_i,\lambda_j)_m \delta_{\beta_1,j}\delta_{\beta_2,j} \delta_{\alpha_1,i}\delta_{\alpha_2,i}\ ,
\end{eqnarray*}
form which the first formula follows. The second follows in the same way. 
\end{proof}

Hence, if we order the basis $\{E_\alpha\}_{\alpha\in \cJ_N}$ so that the first $N$ unit vectors are
$$E_{(1,1)}, \dots, E_{(N,N)}$$
in this order, then $C_{\mathcal{M}_m}$ and $C_{\mathcal{M}^{-1}_m}$ are both zero except in their upper left $N\times N$ blocks where the $(i,j)$th entries are $(\lambda_i,\lambda_j)_m$ and 
$(\lambda_i,\lambda_j)_m^{-1}$ respectively.  

Now let $A$ be a symmetric $N\times N$ matrix with positive entries.  Then a necessary  condition for $A$ to be positive semi-definite is that for all $i,j$ $A_{i,j} \leq \sqrt{A_{i,i} A_{j,j}}$. 
  For $N=2$, the condition is also sufficient, but it is easy to see that sufficiency fails already for $N=3$. 

However, combining the necessity with Remark~\ref{KMSmu}, we see that $\cM_m$ is never completely positive except in the KMS case, $m = \delta_{1/2}$, corresponding to the geometric mean. On the other hand, at least in the $2\times2$ case, $\cM_m^{-1}$ is completely positive whenever $m$ is even. There are  important cases in which $\cM_m^{-1}$ is completely positive in every dimension, including the KMS and BKM cases. We return to this later.

\begin{defi} Let $\Phi$ be a linear transformation on $\cM_N(\C)$, and hence on $\H_m$ for each  $m\in\mathcal{P}[0,1]$.  Its adjoint with respect to the inner product $\langle \cdot,\cdot\rangle_m$ is denoted $\Phi^{*,m}$.
\end{defi}

\begin{lm}\label{adlem}  Let $\Phi$ be an Hermitian  linear transformation on $\cM_N(\C)$ and let $m\in\mathcal{P}[0,1]$.  Then
\begin{equation}\label{adform}
\Phi^{*,m}=  \mathcal{M}^{-1}_{{m}}\circ  \Phi^\dagger \circ   \mathcal{M}_{{m}}\ .
\end{equation}
\end{lm}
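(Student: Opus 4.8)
The plan is to verify the identity $\Phi^{*,m} = \mathcal{M}_m^{-1}\circ \Phi^\dagger \circ \mathcal{M}_m$ directly from the definitions of the two inner products and the two adjoints involved. Recall that $\langle B,A\rangle_m = \tr[B^*\mathcal{M}_m(A)]$ by \eqref{integral_M}, while $\langle B,A\rangle_{\fH} = \frac1N\tr[B^*A]$, and $\Phi^\dagger$ is the adjoint with respect to $\langle\cdot,\cdot\rangle_{\fH}$. So the natural first step is to rewrite $\langle\cdot,\cdot\rangle_m$ in terms of $\langle\cdot,\cdot\rangle_{\fH}$: for all $A,B$,
\[
\langle B,A\rangle_m = \tr[B^*\mathcal{M}_m(A)] = N\,\langle B,\mathcal{M}_m(A)\rangle_{\fH}\ .
\]

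Next I would chase the defining identity of $\Phi^{*,m}$. For all $A,B\in\cM_N(\C)$,
\[
\langle B,\Phi(A)\rangle_m = \langle \Phi^{*,m}(B),A\rangle_m\ .
\]
Using the rewriting above on both sides and cancelling the factor $N$, this becomes $\langle B,\mathcal{M}_m(\Phi(A))\rangle_{\fH} = \langle \Phi^{*,m}(B),\mathcal{M}_m(A)\rangle_{\fH}$. On the right-hand side I move $\mathcal{M}_m$ across using that $\mathcal{M}_m$ is self-adjoint on $\fH$ — this is where I must be a little careful. For general $m$, $\mathcal{M}_m$ need not be Hermitian, but it \emph{is} self-adjoint on $\fH$: indeed by \eqref{integral_M2}, $\mathcal{M}_m = \left(\int_0^1\Delta^s\,\mathrm dm\right)R_\sigma$, and both $\Delta$ and $R_\sigma$ are positive (hence self-adjoint) operators on $\fH$ that commute, so their product $\mathcal{M}_m$ is self-adjoint on $\fH$. (Alternatively, the characteristic matrix computation in Lemma~\ref{charMM} exhibits $C_{\mathcal{M}_m}$ as a diagonal real matrix in the matrix unit basis, which also shows $\mathcal{M}_m$ is self-adjoint on $\fH$.) Thus $\langle \Phi^{*,m}(B),\mathcal{M}_m(A)\rangle_{\fH} = \langle \mathcal{M}_m(\Phi^{*,m}(B)),A\rangle_{\fH}$, and the equality of inner products becomes
\[
\langle B,\mathcal{M}_m(\Phi(A))\rangle_{\fH} = \langle \mathcal{M}_m(\Phi^{*,m}(B)),A\rangle_{\fH}\ .
\]
Now apply the definition of $\dagger$ to the left side: $\langle B,\mathcal{M}_m(\Phi(A))\rangle_{\fH} = \langle (\mathcal{M}_m\circ\Phi)^\dagger(B),A\rangle_{\fH} = \langle \Phi^\dagger(\mathcal{M}_m(B)),A\rangle_{\fH}$, using $(\mathcal{M}_m\circ\Phi)^\dagger = \Phi^\dagger\circ\mathcal{M}_m^\dagger = \Phi^\dagger\circ\mathcal{M}_m$ (again by self-adjointness of $\mathcal{M}_m$ on $\fH$). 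Since $A$ and $B$ are arbitrary and the inner product on $\fH$ is non-degenerate, we conclude $\mathcal{M}_m\circ\Phi^{*,m} = \Phi^\dagger\circ\mathcal{M}_m$, and applying $\mathcal{M}_m^{-1}$ on the left (using that $\mathcal{M}_m$ is invertible, as established in the discussion preceding Lemma~\ref{charMM}) gives \eqref{adform}.

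The only genuinely non-routine point is the justification that $\mathcal{M}_m$ is self-adjoint on $\fH$, since the hypothesis is merely that $\Phi$ is Hermitian and $m$ is an arbitrary probability measure on $[0,1]$, not necessarily even — so one cannot invoke the Hermitianity of $\mathcal{M}_m$ from \eqref{hermM}. I would flag that this self-adjointness follows either from the representation \eqref{integral_M2} as a product of commuting positive operators on $\fH$, or from the explicit diagonal form of $C_{\mathcal{M}_m}$ in Lemma~\ref{charMM}. Everything else is a straightforward adjoint chase using non-degeneracy of $\langle\cdot,\cdot\rangle_{\fH}$; in particular, note that the Hermitianity hypothesis on $\Phi$ is not actually needed for this particular identity, though it is presumably imposed because $\Phi^{*,m}$ is only of interest when $\Phi$ is Hermitian.
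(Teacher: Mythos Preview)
Your proof is correct and follows essentially the same route as the paper's: both reduce the claim to the self-adjointness of $\mathcal{M}_m$ on $\fH$ (equivalently, the trace identity $\tr[B^*\mathcal{M}_m(X)] = \tr[(\mathcal{M}_m(B))^*X]$), then chase the definitions of $\Phi^{*,m}$ and $\Phi^\dagger$ and apply invertibility of $\mathcal{M}_m$. The paper works directly with traces while you phrase things via $\langle\cdot,\cdot\rangle_{\fH}$, but the content is identical; the paper also inserts a preliminary verification that $\Phi^\dagger$ is Hermitian whenever $\Phi$ is, which---as you correctly observe---is not actually used in establishing \eqref{adform} itself.
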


\begin{proof} First recall that, since $\Phi$ is Hermitian, $\Phi^\dagger$ is Hermitian. To see this, note that
\begin{eqnarray*}
\tr[(\Phi^\dagger(B^*))^*A] &=& \tr[B \Phi(A)] = \overline{\tr[B^*(\Phi(A))^*]} =  \overline{\tr[B^*\Phi(A^*)]} \\
&=&\overline{\tr[(\Phi^\dagger B)^*A^*]}  = \tr[(\Phi^\dagger B)A]\ .
\end{eqnarray*}
Thus, $(\Phi^\dagger(B^*))^* = \Phi^\dagger(B)$ for all $B$. 

We now compute
\begin{eqnarray*}
\langle B, \Phi(A)\rangle_m &=& \tr[B^* \mathcal{M}_m(\Phi(A))] =  \tr[(\mathcal{M}_m(B))^*\Phi(A)] \\
&=&  \tr[(\Phi^\dagger(\mathcal{M}_m(B)))^* A]  \\
&=&    \tr[(\mathcal{M}_m^{-1}(\Phi^\dagger(\mathcal{M}_m(B))))^* \mathcal{M}_m(A)] 
\end{eqnarray*}
Then \eqref{adform} follows from \eqref{hermM}.  
\end{proof}

It follows that an Hermitian  operator $\Phi$ is self-adjoint on $\H_m$ if and only if
\begin{equation}\label{sacahr}
 \mathcal{M}_{{m}}\circ \Phi = \Phi^\dagger \circ  \mathcal{M}_{{m}}\ .
\end{equation}

\begin{lm}\label{adlem2} Let $\{E_\alpha\}$ be the orthonormal basis of $\fH$ specified in \eqref{onbas}. Then for all $m\in \mathcal{P}[0,1]$,
\begin{equation}\label{mb1}
\#(E_\alpha^* \otimes E_\beta )\circ \cM_m = (\lambda_{\alpha_1},\lambda_{\beta_1})_m \#(E_\alpha^* \otimes E_\beta )
\end{equation}
and 
\begin{equation}\label{mb2}
 \cM_m \circ \#(E_\alpha^* \otimes E_\beta )  = (\lambda_{\alpha_2},\lambda_{\beta_2})_m \#(E_\alpha^* \otimes E_\beta )
\end{equation}
\end{lm}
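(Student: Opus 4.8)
The plan is to prove both identities by evaluating each composite operator on an arbitrary $X \in \cM_N(\C)$ and exploiting the fact that the matrix units $E_\alpha$ are built from eigenvectors of $\sigma$, so that multiplication by $\sigma^s$ on either side of $E_\alpha^*$ or of $E_\beta$ collapses to multiplication by a scalar. First I would record the four elementary eigenrelations. Writing $E_\alpha^* = E_{(\alpha_1,\alpha_2)}^* = E_{(\alpha_2,\alpha_1)} = \sqrt{N}\,|u_{\alpha_2}\rangle\langle u_{\alpha_1}|$ and $E_\beta = \sqrt{N}\,|u_{\beta_1}\rangle\langle u_{\beta_2}|$, and using $\sigma^s u_j = \lambda_j^s u_j$ (valid for all real $s$ by the Spectral Theorem), one gets at once
\[ \sigma^s E_\alpha^* = \lambda_{\alpha_2}^s E_\alpha^*\ , \quad E_\alpha^* \sigma^s = \lambda_{\alpha_1}^s E_\alpha^*\ , \quad \sigma^s E_\beta = \lambda_{\beta_1}^s E_\beta\ , \quad E_\beta \sigma^s = \lambda_{\beta_2}^s E_\beta\ . \]

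Next, for \eqref{mb1}, I would apply $\#(E_\alpha^*\otimes E_\beta)\circ\cM_m$ to $X$, using the definition \eqref{integral_M} of $\cM_m$ together with $\#(F\otimes G)Y = FYG$:
\[ (\#(E_\alpha^* \otimes E_\beta)\circ\cM_m)(X) = E_\alpha^*\Big(\int_0^1 \sigma^s X \sigma^{1-s}\,{\rm d}m(s)\Big)E_\beta = \int_0^1 (E_\alpha^*\sigma^s)\, X\, (\sigma^{1-s}E_\beta)\,{\rm d}m(s)\ . \]
By the eigenrelations, $E_\alpha^*\sigma^s = \lambda_{\alpha_1}^s E_\alpha^*$ and $\sigma^{1-s}E_\beta = \lambda_{\beta_1}^{1-s}E_\beta$, so the integrand equals $\lambda_{\alpha_1}^s\lambda_{\beta_1}^{1-s}\,E_\alpha^* X E_\beta$; integrating against $m$ and recalling $(x,y)_m = \int_0^1 x^s y^{1-s}\,{\rm d}m(s)$ yields $(\lambda_{\alpha_1},\lambda_{\beta_1})_m\,E_\alpha^* X E_\beta = (\lambda_{\alpha_1},\lambda_{\beta_1})_m\,\#(E_\alpha^*\otimes E_\beta)(X)$, which is \eqref{mb1}.

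For \eqref{mb2} I would run the mirror computation: $\cM_m(\#(E_\alpha^*\otimes E_\beta)(X)) = \int_0^1 \sigma^s (E_\alpha^* X E_\beta)\sigma^{1-s}\,{\rm d}m(s)$, where now $\sigma^s$ meets $E_\alpha^*$ from the left and $\sigma^{1-s}$ meets $E_\beta$ from the right, so that $\sigma^s E_\alpha^* = \lambda_{\alpha_2}^s E_\alpha^*$ and $E_\beta\sigma^{1-s} = \lambda_{\beta_2}^{1-s}E_\beta$; the integrand is then $\lambda_{\alpha_2}^s\lambda_{\beta_2}^{1-s}\,E_\alpha^* X E_\beta$ and integration against $m$ gives $(\lambda_{\alpha_2},\lambda_{\beta_2})_m\,\#(E_\alpha^*\otimes E_\beta)(X)$.

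There is no genuine obstacle; the computation is routine. The one point needing a moment's care is to keep straight which component of the index pair survives on each side, which is dictated purely by whether $\sigma^s$ multiplies a rank-one operator $|u_p\rangle\langle u_q|$ from the left (picking out $\lambda_p$) or from the right (picking out $\lambda_q$). The two identities differ exactly in that in \eqref{mb2} the extra $\sigma$-factors sit on the outside of $E_\alpha^* X E_\beta$, whereas in \eqref{mb1} they sit between $E_\alpha^*$ and $X$ and between $X$ and $E_\beta$; this is why \eqref{mb1} involves the first components $\alpha_1,\beta_1$ and \eqref{mb2} the second components $\alpha_2,\beta_2$.
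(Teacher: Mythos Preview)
Your proof is correct. For \eqref{mb1} it coincides with the paper's direct computation; for \eqref{mb2} you run the mirror computation directly, whereas the paper instead derives \eqref{mb2} from \eqref{mb1} by taking daggers, using $\cM_m^\dagger = \cM_m$ and the identity $(\#(E_\alpha^*\otimes E_\beta))^\dagger = \#(E_{\alpha'}^*\otimes E_{\beta'})$ from Lemma~\ref{daglem}. Your route is slightly more elementary and self-contained, while the paper's avoids repeating the integral computation at the cost of invoking the earlier dagger formula; the difference is cosmetic.
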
 

\begin{proof}  We compute 
$$E_\alpha^* \cM_m(A) E_\beta = \int_0^1  E_\alpha^* \sigma^{1-s}A \sigma^s  E_\beta {\rm d}m = (\lambda_{\alpha_1},\lambda_{\beta_1})_m E_\alpha^* A  E_\beta\ ,$$
and this proves \eqref{mb1}.

Next, since $\cM^\dagger_m = \cM_m$, by \eqref{daggform}
$$
\cM_m \circ \#(E_\alpha^* \otimes E_\beta )  = (\#(E_{\alpha'}^* \otimes E_{\beta'} )\circ \cM_m)^\dagger =  (\lambda_{\alpha_2},\lambda_{\beta_2})_m (\#(E_{\alpha'}^* \otimes E_{\beta'} ))^\dagger
$$
form which \eqref{mb2} follows upon another application of \eqref{daggform}. 
\end{proof}

\begin{thm}\label{main1}  Let $\Phi$ be an Hermitian operator on $\fH$. Let  $\{E_\alpha\}$ be the orthonormal basis of $\fH$ specified in \eqref{onbas}, and let $C_\Phi$ be the characteristic matrix of $\Phi$ with respect to this basis. Then 
$\Phi$ is self-adjoint on $\H_m$ if and only if  for all $\alpha,\beta$, 
\begin{equation}\label{mb3}
 (c_\Phi)_{\alpha,\beta} = \frac{(\lambda_{\alpha_1},\lambda_{\beta_1})_m}{(\lambda_{\alpha_2},\lambda_{\beta_2})_m}  (c_\Phi)_{\beta',\alpha'}
\end{equation}
In particular, define the matrix $B_{\Phi,m}$ by
\begin{equation}\label{mb4}
(b_{\Phi,m})_{\alpha,\beta}  =  (c_\Phi)_{\alpha,\beta} (\lambda_{\alpha_2},\lambda_{\beta_2})_m\ ,
\end{equation}
and the anti-unitary self-adjoint operator $U$ on $\C^{N^2}$ given by
\begin{equation}\label{mb6}
(Uv)_\alpha = \overline{v_{\alpha'}}\  .
\end{equation}
 Then \eqref{mb5} is equivalent to
\begin{equation}\label{mb5}
UB_{\Phi,m} = B_{\Phi,m}U \ .
\end{equation}
\end{thm}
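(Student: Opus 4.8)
The plan is to reduce the whole statement to the operator identity of Lemma~\ref{adlem}, equation~\eqref{sacahr}: for an Hermitian $\Phi$, self-adjointness on $\H_m$ is equivalent to $\cM_m\circ\Phi=\Phi^\dagger\circ\cM_m$. I would then read this identity off at the level of characteristic matrices in the basis $\{E_\alpha\}$ of \eqref{onbas}, using the scaling relations \eqref{mb1}--\eqref{mb2} of Lemma~\ref{adlem2} and the transpose-type formula \eqref{daggform2} of Lemma~\ref{daglem}. Note that $\{E_\alpha\}$ is symmetric, so Lemma~\ref{daglem} applies, and $\Phi$ is Hermitian by hypothesis, so by Lemma~\ref{Hchar} its characteristic matrix $C_\Phi$ is self-adjoint; both facts will be used.

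Concretely: expanding $\Phi=\sum_{\alpha,\beta}(c_\Phi)_{\alpha,\beta}\,\#(E_\alpha^*\otimes E_\beta)$ and applying \eqref{mb2} to each summand gives $\cM_m\circ\Phi=\sum_{\alpha,\beta}(c_\Phi)_{\alpha,\beta}(\lambda_{\alpha_2},\lambda_{\beta_2})_m\,\#(E_\alpha^*\otimes E_\beta)$, so by uniqueness of the expansion coefficients the characteristic matrix of $\cM_m\circ\Phi$ is exactly $B_{\Phi,m}$ of \eqref{mb4}. By \eqref{daggform2} one has $\Phi^\dagger=\sum_{\alpha,\beta}(c_\Phi)_{\beta',\alpha'}\,\#(E_\alpha^*\otimes E_\beta)$, and applying \eqref{mb1} to each summand gives $\Phi^\dagger\circ\cM_m=\sum_{\alpha,\beta}(c_\Phi)_{\beta',\alpha'}(\lambda_{\alpha_1},\lambda_{\beta_1})_m\,\#(E_\alpha^*\otimes E_\beta)$. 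Equating these two operators coefficientwise and dividing by the strictly positive number $(\lambda_{\alpha_2},\lambda_{\beta_2})_m$ (positivity from \eqref{avg_x_x}) yields exactly \eqref{mb3}.

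For the matricial reformulation I would first spell out $UB_{\Phi,m}=B_{\Phi,m}U$ entrywise, being careful that $U$ is anti-linear: comparing $U\circ B_{\Phi,m}$ with $B_{\Phi,m}\circ U$ componentwise (and using $(\alpha')'=\alpha$) shows that \eqref{mb5} is equivalent to $(b_{\Phi,m})_{\alpha,\beta}=\overline{(b_{\Phi,m})_{\alpha',\beta'}}$ for all $\alpha,\beta$. Since $(\alpha')_2=\alpha_1$, the right-hand side equals $\overline{(c_\Phi)_{\alpha',\beta'}}\,(\lambda_{\alpha_1},\lambda_{\beta_1})_m$; now the self-adjointness of $C_\Phi$ (i.e. $\overline{(c_\Phi)_{\alpha',\beta'}}=(c_\Phi)_{\beta',\alpha'}$), together with the reality of the $m$-means, turns this into $(c_\Phi)_{\alpha,\beta}(\lambda_{\alpha_2},\lambda_{\beta_2})_m=(c_\Phi)_{\beta',\alpha'}(\lambda_{\alpha_1},\lambda_{\beta_1})_m$, which is \eqref{mb3} once more. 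The claim that $U$ is anti-unitary and self-adjoint is an immediate check: $(\alpha')'=\alpha$ gives $U^2=\id$.

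The only real difficulty is bookkeeping: tracking the involution $\alpha\mapsto\alpha'=(\alpha_2,\alpha_1)$, in particular which of $\alpha_1,\alpha_2$ survives after composing with $\cM_m$ on the left (the ``second'' index $\alpha_2$) versus on the right (the ``first'' index $\alpha_1$), and handling the anti-linearity of $U$ correctly when passing between the operator identity and its entrywise form. A tempting shortcut --- rewriting the condition through $(b_{\Phi,m})_{\beta',\alpha'}$ instead of $\overline{(b_{\Phi,m})_{\alpha',\beta'}}$ --- fails for non-even $m$, since $(\lambda_{\alpha_1},\lambda_{\beta_1})_m\neq(\lambda_{\beta_1},\lambda_{\alpha_1})_m$ in general; combining conjugation with Hermiticity is exactly what makes the equivalence hold uniformly in $m$.
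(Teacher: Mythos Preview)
Your proof is correct and follows essentially the same route as the paper's own argument: reduce to the operator identity $\cM_m\circ\Phi=\Phi^\dagger\circ\cM_m$ via Lemma~\ref{adlem}, read it off coefficientwise using Lemmas~\ref{adlem2} and~\ref{daglem}, and then recast \eqref{mb3} as the commutation \eqref{mb5} through the intermediate form $(b_{\Phi,m})_{\alpha,\beta}=\overline{(b_{\Phi,m})_{\alpha',\beta'}}$. Your write-up simply fills in the bookkeeping that the paper leaves implicit, and your closing remark about why the conjugation-plus-Hermiticity route is forced for non-even $m$ is a nice clarification.
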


\begin{proof} Applying Lemma~\ref{adlem} together with Lemma~\ref{adlem2} yields \eqref{mb3}.  Then \eqref{mb3} can be written as 
\begin{equation}\label{mb7}
(b_{\Phi,m})_{\alpha,\beta}  =  \overline{(b_{\Phi,m})}_{\alpha',\beta'} \ ,
\end{equation}
Then \eqref{mb5} follows from \eqref{mb6} and the definition of $U$.
\end{proof}

\begin{remark} We may identify $\C^{N^2}$ equipped with its usual inner product  and $\cM_N(\C)$ equipped with the Hilbert-Schmidt inner product  in the usual way, identifying the vector $v$ that has entries $v_\alpha$ with the matrix $V$ that has entries $V_{\alpha_1,\alpha_2}$. Under this identification $U$ is identified with $V^*$. That is, the anti-unitary map $U$ may be identified with the map $V \mapsto V^*$. 
\end{remark}

\subsection{$QMS_m$ is always a pointed cone}

\begin{lm}\label{comm} An Hermitian operator $\cL$ satisfies
$\pm\cL\in QMS$   if and only if for some $H$ with $H = H^*$, $\cL(A) = i[H,A]$. 
\end{lm}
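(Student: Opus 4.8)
I would treat the two implications separately, the ``only if'' being the one with content. For ``if'': assume $\cL(A) = i[H,A]$ with $H = H^*$. Then $\cL$ is clearly Hermitian, and the family $\Phi_t(A) := e^{itH}Ae^{-itH}$ satisfies $\ddt\Phi_t(A) = i[H,\Phi_t(A)] = \cL(\Phi_t(A))$ with $\Phi_0 = I$, so $\Phi_t = e^{t\cL}$. Each $\Phi_t$ is unital and completely positive --- it is the Kraus map with the single Kraus operator $e^{-itH}$ --- so $\cL \in QMS$; and since $-\cL(A) = i[-H,A]$ with $-H$ self-adjoint, the identical argument gives $-\cL \in QMS$.

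\textbf{The ``only if'' direction.} Assume $\cL$ is Hermitian with $\pm\cL \in QMS$; in particular $\cL\one = 0$. I would fix a unital orthonormal basis $\{F_\alpha\}$ of $\fH$ (these exist by Remark~\ref{remark_unital_basis}) and let $C_\cL$, $R_\cL$ be the characteristic and reduced characteristic matrices of $\cL$ in this basis. By Lemma~\ref{redLm}, $\cL \in QMS$ forces $R_\cL \geq 0$ while $-\cL \in QMS$ forces $R_{-\cL} = -R_\cL \geq 0$; hence $R_\cL = 0$, i.e.\ $C_\cL$ is supported on its first row and column. Re-running the splitting in the proof of Lemma~\ref{redLm} with this vanishing --- and using that $C_\cL$ is self-adjoint (Lemma~\ref{Hchar}) together with $F_{(1,1)} = \one$ to assemble the first-row and first-column parts into a single $G^*\cdot + \cdot G$ term --- yields $\cL(A) = G^*A + AG$ with $G = \tfrac12(c_\cL)_{(1,1),(1,1)}\one + \sum_\beta (c_\cL)_{(1,1),\beta}F_\beta$.

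\textbf{Conclusion and main obstacle.} Imposing $\cL\one = G^* + G = 0$ shows $G$ is skew-Hermitian, so $G = -iH$ with $H := iG$ self-adjoint, and then $\cL(A) = G^*A + AG = iHA - iAH = i[H,A]$, as wanted. I do not expect a real obstacle: the only delicate point is the passage from $R_\cL = 0$ back to $\cL(A) = G^*A + AG$, but that is precisely the first-row/first-column decomposition performed inside the proof of Lemma~\ref{redLm} and can be quoted; everything else is the elementary semigroup computation of the first paragraph and the observation that a positive semi-definite matrix whose negative is also positive semi-definite must be zero.
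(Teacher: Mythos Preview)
Your proof is correct and follows essentially the same route as the paper: both directions match, with the ``only if'' using Lemma~\ref{redLm} applied to $\cL$ and $-\cL$ to force $R_\cL = 0$, then reading off $\cL(A) = G^*A + AG$ from the surviving first row and column of $C_\cL$, and finally using $\cL\one = 0$ to conclude $G$ is skew-adjoint. The only cosmetic difference is that the paper writes $G = K - iH$ and kills $K$ via $G^* + G = 0$, whereas you go directly to $G = -iH$; also, your sum defining $G$ should run over $\beta \neq (1,1)$ to avoid double-counting the $(1,1)$ term, but this is a harmless slip.
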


\begin{proof} It is evident that if $\cL(A) = i[H,A]$ with $H$ self-adjoint, then both $\cL$ and $-\cL$ belong to $QMS$.  For the converse, suppose that  $\cL$ and $-\cL$ belong to $QMS$. Consider some unital orthonormal basis $\{F_\alpha\}$  of $\mathfrak{H}$. Let $C_\cL$ be the characteristic matrix of $\cL$ with respect to this basis. Since $\cL$ and $-\cL$ belong to $QMS$, the reduced characteristic matrix of $\cL$ must be both positive semi-definite and negative semidefinite, and hence it is zero.  Thus $(c_\cL)_{\alpha,\beta} =0$ unless either $\alpha = (1,1)$ or $\beta = (1,1)$ or both. Since $\cL$ is Hermitian $(c_\cL)_{(1,1),\beta} = \overline{(c_\cL)_{\beta,(1,1)}}$.  Define
$$G := \frac12 (c_\cL)_{(1,1),(1,1)} + \sum_{\beta \neq (1,1)} (c_\cL)_{(1,1),\beta} F_\beta\ .$$
Then
${\displaystyle \cL(A) = \sum_{\alpha,\beta}(c_\Phi)_{\alpha,\beta}F_\alpha^* A F_\beta = G^*A + AG}$. Write $G = K-iH$ with $K$ and $H$ self-adjoint. Since $\cL(\one) = 0$, we have $G^*+G = 0$, that is, $K=0$. Then $\cL(A) = iHA-iAH = i[H,A]$.
\end{proof}

\begin{thm}\label{Pointed}  For any $m\in \mathcal{P}[0,1]$, the only $\Phi$ such that $\Phi\in QMS_m$ and $-\Phi\in QMS_m$ is $\Phi =0$.
\end{thm}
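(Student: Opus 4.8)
The plan is to use Lemma~\ref{comm} to pin down the form of $\Phi$, and then to play the inner product $\langle\cdot,\cdot\rangle_m$ off against the Hilbert--Schmidt inner product on $\fH$. First I would record that every QMS generator is automatically Hermitian: in the LGKS form \eqref{lind2} the CP map $\Phi$ is Hermitian (immediate from any Kraus representation), while $A\mapsto \Phi(\one)A + A\Phi(\one)$ and $A\mapsto i[K,A]$ are visibly Hermitian. So if $\Phi\in QMS_m$ and $-\Phi\in QMS_m$, then in particular $\pm\Phi\in QMS$, and Lemma~\ref{comm} applies: there is a self-adjoint $H\in\cM_N(\C)$ with $\Phi(A) = i[H,A]$ for all $A$.

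Next I would determine the spectrum of such a $\Phi$. The semigroup it generates is $e^{t\Phi}(A) = e^{itH}Ae^{-itH}$, i.e. conjugation by the unitary $e^{itH}$; since the Hilbert--Schmidt norm is unitarily invariant, this is a one-parameter group of unitary operators on $\fH$, so $\Phi^\dagger = -\Phi$, i.e. $\Phi$ is skew-adjoint on $\fH$. (Equivalently: if $h_1,\dots,h_N$ are the eigenvalues of $H$ with eigenvectors $v_1,\dots,v_N$, then $\Phi(|v_j\rangle\langle v_k|) = i(h_j-h_k)|v_j\rangle\langle v_k|$.) In either description, every eigenvalue of $\Phi$ is purely imaginary.

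On the other hand, $\Phi\in QMS_m$ implies in particular that $\Phi$ is self-adjoint on the finite-dimensional inner product space $\H_m$, so by the spectral theorem $\Phi$ is diagonalizable over $\C$ with only real eigenvalues. Combining the two observations, every eigenvalue of $\Phi$ is simultaneously real and purely imaginary, hence $0$; and an operator that is diagonalizable with all eigenvalues equal to $0$ is the zero operator. Thus $\Phi = 0$, and of course $0\in QMS_m$ trivially, which completes the proof. I do not expect any real obstacle here: essentially all of the work is already done by Lemma~\ref{comm}, and the only remaining ingredient is the elementary fact that no nonzero operator on a finite-dimensional space can be self-adjoint with respect to one inner product and skew-adjoint with respect to another.
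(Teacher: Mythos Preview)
Your proof is correct and takes a genuinely different route from the paper's. Both arguments start the same way, invoking Lemma~\ref{comm} to reduce to $\Phi(A)=i[H,A]$ with $H=H^*$. From there the paper proceeds computationally: it writes out the characteristic matrix of $\Phi$ in the matrix unit basis $\{E_\alpha\}$ associated to $\sigma$, applies the self-adjointness criterion of Theorem~\ref{main1} entry by entry, and forces each coefficient $(c_\Phi)_{\alpha,\beta}$ to vanish through case analysis. Your argument is purely spectral: you observe that $\Phi$ is skew-adjoint on $\fH$ (so its spectrum is purely imaginary) and self-adjoint on $\H_m$ (so its spectrum is real and it is diagonalizable), whence $\Phi=0$. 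Your approach is shorter and more conceptual, using nothing about $m$ beyond the fact that $\langle\cdot,\cdot\rangle_m$ is a genuine inner product; the paper's approach, while heavier, exercises the characteristic-matrix machinery that drives the rest of the paper and in particular shows explicitly how Theorem~\ref{main1} constrains the entries.
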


\begin{proof}  Suppose that $\Phi\in QMS_m$ and $-\Phi\in QMS_m$.  By Lemma~\ref{comm}, there is a self-adjoint $H\in \cM_N(\C)$ such that $\Phi(A) = i[H,A]$.  Let  $\{E_\alpha\}$ be the orthonormal basis of $\fH$ specified in \eqref{onbas}, so that we may apply Theorem~\ref{main1}.  We compute
\begin{eqnarray*}
(c_\Phi)_{\alpha,\beta} &=& i\frac{1}{N^2} \sum_{i,j} \tr \left[ (E_\alpha^* E_{i,j} E_\beta)^*(H E_{i,j} - E_{i,j}H)\right]\\
&=& i\frac{1}{N^2} \sum_{i,j} \tr \left( \tr[E_{i,j}E_\beta^*E_{j,i}E_\alpha H]  - \tr[R_{j,i}E_\alpha E_{i,j} H E_\beta^*]\right)\\
&=& i \delta_{\beta_1,\beta_2} \langle E^*_\alpha, H\rangle_{\mathfrak{H}}  -  i \delta_{\alpha_1,\alpha_2} \langle E_\beta, H\rangle_{\mathfrak{H}} \ .
\end{eqnarray*}
In particular, $(c_\Phi)_{\alpha,\beta} =0$ unless either $\alpha = \alpha'$ or $\beta = \beta'$ or  both. 

If $\alpha = \alpha'$ and $\beta = \beta'$, then 
$$
\langle E^*_\alpha, H\rangle_{\mathfrak{H}} = \frac1N\tr[E_\alpha H] = \frac1N\tr[ H E_\alpha ]  \in \R\ ,
$$
and likewise $\langle E_\beta, H\rangle_{\mathfrak{H}}\in \R$. But then $(c_\Phi)_{\alpha,\beta}$ is purely imaginary, and this contradicts \eqref{case1} unless $(c_\Phi)_{\alpha,\beta}= 0$. 

Next, suppose $\alpha = \alpha'$, but $\beta \neq \beta'$.  Then 
$$(c_\Phi)_{\alpha,\beta}  = -i\langle E_\beta, H\rangle_{\mathfrak{H}} \quad{\rm and}  \quad (c_\Phi)_{\beta',\alpha}  = i\langle E_\beta, H\rangle_{\mathfrak{H}}\ .$$
By \eqref{case2} we then have 
$$-(\lambda_{\alpha_2},\lambda_{\beta_2})_m \langle E_\beta, H\rangle_{\mathfrak{H}} =  (\lambda_{\alpha_2},\lambda_{\beta_1})_m \langle E_\beta, H\rangle_{\mathfrak{H}}\ ,$$
which is impossible unless $\langle E_\beta, H\rangle_{\mathfrak{H}}= 0$. But since $\beta$ is arbitrary apart from the condition that $\beta \neq \beta'$, this completes the proof that $H =0$.
\end{proof}

\subsection{The decomposition $\widehat{\mathfrak{H}} = \widehat{\mathfrak{H}}_{\mathcal{S}} \oplus \widehat{\mathfrak{H}}_{\mathcal{S}}^\perp$}

Recall Definition~\ref{orthsdef} of $\widehat{\mathfrak{H}}_{\mathcal{S}}$ as the subspace of all operators $\Phi$ on $\widehat{\mathfrak{H}}$ of the form $\Phi(A) = XA + AY$ for some $X,Y\in \cM_N(\C)$. 

\begin{lm}\label{orthchar} An operator  $\Phi$ on $\mathfrak{H}$ belongs to $\widehat{\mathfrak{H}}_{\mathcal{S}}$  if and only if for any unital orthonormal basis of $\mathfrak{H}$, the reduced characteristic matrix of $\Phi$ is zero; i.e., $R_\Phi =0$. 
\end{lm}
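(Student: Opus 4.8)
The plan is to prove both implications by a direct computation of the characteristic matrix $C_\Phi$ with respect to a unital orthonormal basis $\{F_\alpha\}_{\alpha\in\cJ_N}$, exploiting the two defining features of such a basis: $F_{(1,1)}=\one$ and $F_\alpha^*=F_{\alpha'}$ (so in particular $F_{(1,1)}^*=\one$, since $(1,1)'=(1,1)$). There is no genuine obstacle here — it is a bookkeeping computation — so the sketch below is essentially the whole argument.

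For the ``only if'' direction I would suppose $\Phi(A)=XA+AY$, i.e. $\Phi=\#(X\otimes\one)+\#(\one\otimes Y)$. Combining \eqref{GKS2} with the orthogonality identity \eqref{orth} gives
$$
(c_\Phi)_{\alpha,\beta}=\langle F_\alpha^*,X\rangle_{\fH}\,\langle F_\beta,\one\rangle_{\fH}+\langle F_\alpha^*,\one\rangle_{\fH}\,\langle F_\beta,Y\rangle_{\fH}.
$$
Since the basis is orthonormal and $\one=F_{(1,1)}$, one has $\langle F_\beta,\one\rangle_{\fH}=\delta_{\beta,(1,1)}$ and $\langle F_\alpha^*,\one\rangle_{\fH}=\langle F_{\alpha'},F_{(1,1)}\rangle_{\fH}=\delta_{\alpha,(1,1)}$. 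Hence $(c_\Phi)_{\alpha,\beta}=0$ whenever $\alpha\ne(1,1)$ and $\beta\ne(1,1)$, which is exactly the assertion $R_\Phi=0$; and since the argument used no property of the basis beyond unitality, this holds for every unital orthonormal basis.

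For the converse I would start from $R_\Phi=0$ in some unital basis, so that in the expansion \eqref{GKS1} only terms with $\alpha=(1,1)$ or $\beta=(1,1)$ survive. Grouping these and using $F_{(1,1)}=F_{(1,1)}^*=\one$ writes $\Phi$ in the form $\Phi(A)=XA+AY$ with
$$
X:=\tfrac12(c_\Phi)_{(1,1),(1,1)}\one+\sum_{\alpha\ne(1,1)}(c_\Phi)_{\alpha,(1,1)}F_\alpha^*,\qquad
Y:=\tfrac12(c_\Phi)_{(1,1),(1,1)}\one+\sum_{\beta\ne(1,1)}(c_\Phi)_{(1,1),\beta}F_\beta,
$$
so that $\Phi\in\widehat{\fH}_{\mathcal S}$.

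The only point that genuinely needs care is the interplay with the choice of basis. It is precisely because $F_{(1,1)}=\one$ that the row and column retained in passing from $C_\Phi$ to $R_\Phi$ are exactly the entries permitted to be nonzero for an operator $A\mapsto XA+AY$, and it is the symmetry $F_\alpha^*=F_{\alpha'}$ that keeps the reconstructed $X$ and $Y$ inside $\cM_N(\C)$. I would also remark that the ``only if'' direction already shows that $R_\Phi=0$ is independent of the choice of unital basis, so the quantifier ``for any unital orthonormal basis'' in the statement is consistent.
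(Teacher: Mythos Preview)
Your proof is correct and follows essentially the same strategy as the paper's: a direct computation of $(c_\Phi)_{\alpha,\beta}$ for the forward direction, and an explicit reconstruction of $X,Y$ from the surviving first row/column for the converse (your formulas for $X$ and $Y$ match the paper's verbatim). Your forward direction is in fact a bit slicker than the paper's, since you invoke the orthogonality identity \eqref{orth} for $\#(X\otimes\one)$ and $\#(\one\otimes Y)$ directly, whereas the paper expands the inner product in $\widehat{\fH}$ via a matrix-unit basis and computes the resulting traces by hand; the content is the same. One small quibble: your closing remark that the symmetry $F_\alpha^*=F_{\alpha'}$ is what ``keeps the reconstructed $X$ and $Y$ inside $\cM_N(\C)$'' is not quite the point---$X$ and $Y$ are automatically in $\cM_N(\C)$ as finite linear combinations of the $F_\alpha$; the symmetry of the basis plays no role in the converse direction.
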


\begin{proof}  Let $\{F_\alpha\}$ be a unital orthonormal basis. Let $\{E_{i,j}\}$ be a matrix unit basis. Then for $\Phi(A) = XA + AY$ we compute
\begin{eqnarray}\label{gsw11}
(c_\Phi)_{\alpha,\beta} &=& \frac{1}{N^3}\sum_{i,j=1}^N \tr [ (F_\alpha^* E_{i,j}F_\beta)^*(XE_{i,j} + E_{i,j}Y)]\nonumber\\
 &=& \frac{1}{N^3}( \tr[F_\beta^*] \tr[F_\alpha X] + \tr[F_\alpha] \tr[F_\beta^*Y])\nonumber\\ 
 &=& \frac{1}{N}( \delta_{\beta, (1,1)} \overline{\langle F_{\alpha}, X\rangle_{\mathfrak{H}} } +  \delta_{\alpha,(1,1)} \langle F_{\beta},Y \rangle_{\mathfrak{H}})
\end{eqnarray} and this is zero unless either $\alpha= (1,1)$ or $\beta = (1,1)$ or both. Hence $R_\Phi =0$.

On the other hand, if $R_\Phi=0$, $\Phi$ has the form 
\begin{equation}\label{gsw1}
\Phi(A) = (c_\Phi)_{(1,1),(1,1)} +  \sum_{\alpha\neq (1,1)}(c_\Phi)_{\alpha,(1,1)}F_\alpha^*A  +   \sum_{\beta \neq (1,1)}(c_\Phi)_{(1,1),\beta }AF_\beta \ ,
\end{equation}
and hence if we define
\begin{eqnarray*}\label{gsw2}
X &:=& \frac12(c_\Phi)_{(1,1),(1,1)} \one     +   \sum_{\alpha\neq (1,1)}(c_\Phi)_{\alpha,(1,1) }F_\alpha^* \nonumber\\
Y &:=& \frac12(c_\Phi)_{(1,1),(1,1)} \one     +   \sum_{\beta \neq (1,1)}(c_\Phi)_{(1,1),\beta }F_\beta
\end{eqnarray*}
then $\Phi$ has the form
$\Phi(A) =   XA + AY$, and hence $\Phi\in \widehat{\mathfrak{H}}_{\mathcal{S}}$. 
\end{proof}

\begin{remark}\label{oneXY}  By the computation in \eqref{gsw11}, 
$$(c_\Phi)_{(1,1),(1,1)} = \frac1N \tr[X^*+Y]  = \overline{\langle E_{(1,1)},X\rangle_{\mathfrak{H}}} +\langle E_{(1,1)},Y\rangle_{\mathfrak{H}}\, $$ but for $\alpha\neq (1,1)$,  both $\langle X ,E_{\alpha} \rangle_{\mathfrak{H}} $  and $\langle Y ,E_{\alpha} \rangle_{\mathfrak{H}} $
can be read off from $C_\Phi$, so that if for some $X',Y'$ we also have $\Phi(A) = X'A + AY'$,  it follows that  for some $\eta\in \C$, $X' = X+\eta\one$ and $Y' = Y - \overline{\eta}\one$. 
\end{remark} 

\begin{lm}\label{redchar}  Let $\Phi$ be CP with a minimal Kraus representation $ \sum_{j=1}^M V_j^* A V_j$. Then the necessary and sufficient condition that $\Phi \in \widehat{\mathfrak{H}}_{\mathcal{S}}^\perp$
 is that $\tr[V_j] =0$ for each $j$.
\end{lm}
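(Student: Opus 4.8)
The plan is to prove the equivalence by a direct computation of the $\widehat{\mathfrak{H}}$-inner product of $\Phi$ against a general element of $\widehat{\mathfrak{H}}_{\mathcal{S}}$, using the orthogonality relation \eqref{orth}. Write $\Phi=\sum_{j=1}^M\#(V_j^*\otimes V_j)$, and recall from Definition~\ref{orthsdef} that every $\Psi\in\widehat{\mathfrak{H}}_{\mathcal{S}}$ has the form $\Psi=\#(X\otimes\one)+\#(\one\otimes Y)$ for some $X,Y\in\cM_N(\C)$. Applying \eqref{orth} term by term, together with $\langle V_j,\one\rangle_{\fH}=\tfrac1N\tr[V_j^*]=\tfrac1N\overline{\tr[V_j]}$ and $\langle V_j^*,\one\rangle_{\fH}=\tfrac1N\tr[V_j]$, gives
\[
\langle \Phi,\Psi\rangle_{\widehat{\mathfrak{H}}}
= \sum_{j=1}^M \langle V_j^*,X\rangle_{\fH}\,\langle V_j,\one\rangle_{\fH}
+ \sum_{j=1}^M \langle V_j^*,\one\rangle_{\fH}\,\langle V_j,Y\rangle_{\fH}
= \frac1N\sum_{j=1}^M\bigl(\overline{\tr[V_j]}\,\langle V_j^*,X\rangle_{\fH} + \tr[V_j]\,\langle V_j,Y\rangle_{\fH}\bigr).
\]

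From this formula both directions are short. If $\tr[V_j]=0$ for all $j$, the right-hand side vanishes for every $X,Y$, so $\Phi\perp\widehat{\mathfrak{H}}_{\mathcal{S}}$. Conversely, if $\Phi\in\widehat{\mathfrak{H}}_{\mathcal{S}}^\perp$, take $Y=0$ and let $X$ range over $\cM_N(\C)$; since $\langle V_j^*,X\rangle_{\fH}=\tfrac1N\tr[V_jX]$, the vanishing of $\sum_j\overline{\tr[V_j]}\,\langle V_j^*,X\rangle_{\fH}$ for all $X$ forces $\sum_j\overline{\tr[V_j]}\,V_j=0$. Because the Kraus representation is minimal, $\{V_1,\dots,V_M\}$ is linearly independent, and hence $\tr[V_j]=0$ for each $j$.

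There is no real obstacle here: the argument is essentially a one-line computation, and the only points requiring care are the sesquilinearity conventions (the inner products on $\widehat{\mathfrak{H}}$ and on $\fH$ are conjugate-linear in the first slot, which is where the conjugates on $\tr[V_j]$ enter) and the observation that the \emph{minimality} of the Kraus representation is exactly what supplies the linear independence needed to pass from $\sum_j\overline{\tr[V_j]}\,V_j=0$ to the individual vanishing of the traces — the complete positivity hypothesis is used only to guarantee that a Kraus representation exists. One could equivalently phrase the proof through characteristic matrices: by Lemma~\ref{orthchar} and \eqref{gsw11}, $\Phi\perp\widehat{\mathfrak{H}}_{\mathcal{S}}$ is equivalent to the vanishing of all entries $(c_\Phi)_{(1,1),\beta}$ and $(c_\Phi)_{\alpha,(1,1)}$ in a unital basis, and computing those entries via \eqref{orth} again expresses them in terms of $\tr[V_j]$; but the direct computation above is the cleanest route.
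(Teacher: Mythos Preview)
Your proof is correct and clean. It differs from the paper's argument in two respects worth noting.

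First, the paper works through characteristic matrices: by Lemma~\ref{orthchar}, $\Phi\in\widehat{\mathfrak{H}}_{\mathcal{S}}^\perp$ is equivalent to the first row and column of $C_\Phi$ (in a unital basis) vanishing, and then expanding $V_j=\sum_\alpha S_{j,\alpha}F_\alpha$ gives $(c_\Phi)_{\alpha,\beta}=\sum_j\overline{S_{j,\alpha}}S_{j,\beta}$. You bypass this by computing $\langle\Phi,\Psi\rangle_{\widehat{\mathfrak{H}}}$ directly from \eqref{orth}, which is shorter and avoids invoking Lemma~\ref{orthchar} at all.

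Second, and more interestingly, the converse directions use different mechanisms. You deduce $\sum_j\overline{\tr[V_j]}\,V_j=0$ and then invoke linear independence (minimality). The paper instead looks at the single entry $(c_\Phi)_{(1,1),(1,1)}=\sum_j|S_{j,(1,1)}|^2=\tfrac{1}{N^2}\sum_j|\tr[V_j]|^2$; its vanishing forces every $\tr[V_j]=0$ by positivity, with no appeal to linear independence. Thus the paper's argument actually establishes the converse for \emph{any} Kraus representation, not only minimal ones, although the lemma as stated does not claim this extra generality. Your use of minimality is perfectly legitimate given the hypothesis; it is just worth observing that the paper's sum-of-squares trick buys a slightly stronger conclusion for free.
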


\begin{proof}  Let $\{F_\alpha\}$  be any unit orthonormal basis. By Lemma~\ref{orthchar}  $\Phi\in \widehat{\mathfrak{H}}_{\mathcal{S}}^\perp$ if and only if the characteristic and reduced characteristic matrices of $\Phi$, $C_\Phi$ and $R_\Phi$, respectively, are such that   the lower right $(N^2-1)\times (N^2-1)$ block of $C_\Phi$ is $R_\Phi$ and all other entries are zero.  In other words, the first row and column of $C_\Phi$ are zero .

Writing $V_j = \sum_\alpha S_{j,\alpha} F_\alpha$, for some unit orthonormal basis $\{F_\alpha\}$, 
${\displaystyle 
\Phi(A) = \sum_{\alpha,\beta}\left(\sum_{j=1}^M (S^*_{\alpha,j}S_{j,\beta} )\right) F_\alpha^* A F_\beta}$,
and hence
$$
(c_\Phi)_{\alpha,\beta} = \sum_{j=1}^M (S^*_{\alpha,j}S_{j,\beta})\ .
$$
If for each $j$, $\tr[V_j] =0$, then $S_{(1,1),j} = 0$ for each $j$, and hence the first row and column of $C_\Phi$ are zero. 

On the other hand, if the first row and column of $C_\Phi$ are zero, then 
$$0 = (c_\Phi)_{(1,1),(1,1)} = \sum_{j=1}^M |S_{j,(1,1)}|^2\ ,$$
and hence $S_{j,(1,1)} =0$ for all $j$.  This implies that $\tr[V_j] =0$. 
\end{proof}

\begin{lm}\label{orthinv}For all $s\in [0,1]$, the subspaces $\widehat{\mathfrak{H}}_{\mathcal{S}}$ and $\widehat{\mathfrak{H}}_{\mathcal{S}}^\perp$ are invariant under the operation of taking the adjoint on $\H_{\delta_s}$.
\end{lm}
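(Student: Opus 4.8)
The plan is to reduce everything to the closed form for the $\langle\cdot,\cdot\rangle_{\delta_s}$-adjoint. Inspecting the proof of Lemma~\ref{adlem} one sees it uses only that $\cM_m$ is self-adjoint on $\fH$ (true for every $m$, hence for $m=\delta_s$) and never the Hermiticity of $\Phi$; so I would first record that
\[
\Phi^{*,\delta_s} \;=\; \cM_{\delta_s}^{-1}\circ\Phi^\dagger\circ\cM_{\delta_s}
\]
holds for \emph{every} linear operator $\Phi$ on $\cM_N(\C)$. I would also recall that $\cM_{\delta_s}$ is nothing but the two-sided multiplication $A\mapsto\sigma^sA\sigma^{1-s}$, with $\cM_{\delta_s}^{-1}\colon A\mapsto\sigma^{-s}A\sigma^{s-1}$.

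\textbf{Invariance of $\widehat{\mathfrak{H}}_{\mathcal{S}}$.} Given $\Phi\in\widehat{\mathfrak{H}}_{\mathcal{S}}$, write $\Phi(A)=XA+AY$. Since $\langle B,FAG\rangle_{\fH}=\langle F^{*}BG^{*},A\rangle_{\fH}$, one has $\Phi^{\dagger}(A)=X^{*}A+AY^{*}$, and conjugating by $\cM_{\delta_s}$,
\[
\Phi^{*,\delta_s}(A)=\cM_{\delta_s}^{-1}\!\big(X^{*}\sigma^{s}A\sigma^{1-s}+\sigma^{s}A\sigma^{1-s}Y^{*}\big)=(\sigma^{-s}X^{*}\sigma^{s})A+A(\sigma^{1-s}Y^{*}\sigma^{s-1})\ ,
\]
again of the form $(\cdot)A+A(\cdot)$, so $\Phi^{*,\delta_s}\in\widehat{\mathfrak{H}}_{\mathcal{S}}$. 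The identical computation shows that the conjugation $\Xi\mapsto\cM_{\delta_s}\,\Xi\,\cM_{\delta_s}^{-1}$ also stabilizes $\widehat{\mathfrak{H}}_{\mathcal{S}}$; I will use this next.

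\textbf{Invariance of $\widehat{\mathfrak{H}}_{\mathcal{S}}^\perp$.} This is the only point that is not purely formal, because the $\delta_s$-adjoint is conjugate-linear but \emph{not} orthogonal on $\widehat{\mathfrak{H}}$, so invariance of the complement does not follow for free. I would use the trace form of the $\widehat{\mathfrak{H}}$ inner product, $\langle\Phi,\Psi\rangle_{\widehat{\mathfrak{H}}}=\tfrac1{N^{2}}\Tr_{\fH}(\Phi^{\dagger}\Psi)$ (immediate from \eqref{dagg2}). For $\Psi\in\widehat{\mathfrak{H}}_{\mathcal{S}}^\perp$ and any $\Phi\in\widehat{\mathfrak{H}}_{\mathcal{S}}$, cyclicity of $\Tr_{\fH}$ gives
\[
\langle\Phi,\Psi^{*,\delta_s}\rangle_{\widehat{\mathfrak{H}}}=\tfrac1{N^{2}}\Tr_{\fH}\!\big(\Phi^{\dagger}\cM_{\delta_s}^{-1}\Psi^{\dagger}\cM_{\delta_s}\big)=\tfrac1{N^{2}}\Tr_{\fH}\!\big((\cM_{\delta_s}\Phi^{\dagger}\cM_{\delta_s}^{-1})\,\Psi^{\dagger}\big)\ .
\]
Since $\Phi^{\dagger}\in\widehat{\mathfrak{H}}_{\mathcal{S}}$ and conjugation by $\cM_{\delta_s}$ preserves $\widehat{\mathfrak{H}}_{\mathcal{S}}$, the operator $B:=\cM_{\delta_s}\Phi^{\dagger}\cM_{\delta_s}^{-1}$ belongs to $\widehat{\mathfrak{H}}_{\mathcal{S}}$; therefore $\Tr_{\fH}(B\Psi^{\dagger})=\overline{\Tr_{\fH}(B^{\dagger}\Psi)}=\overline{N^{2}\langle B,\Psi\rangle_{\widehat{\mathfrak{H}}}}=0$. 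Hence $\Psi^{*,\delta_s}\perp\widehat{\mathfrak{H}}_{\mathcal{S}}$.

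\textbf{The main obstacle.} There is very little to obstruct here, but the one genuine subtlety is exactly the parenthetical after Lemma~\ref{decomplm}: ``invariance of the other follows'' is not formal, since the adjoint on $\H_{\delta_s}$ fails to be an isometry of $\widehat{\mathfrak{H}}$. What saves the day is that the extra conjugation-by-$\cM_{\delta_s}$ factor also stabilizes $\widehat{\mathfrak{H}}_{\mathcal{S}}$ — and this is special to the point masses $m=\delta_s$, for which $\cM_m$ is a two-sided multiplication; it is precisely what breaks down for a general $m$ (e.g.\ BKM), where $\cM_m$ is no longer of product form and $\widehat{\mathfrak{H}}_{\mathcal{S}}^\perp$ is indeed not invariant.
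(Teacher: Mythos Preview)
Your proof is correct. For $\widehat{\mathfrak{H}}_{\mathcal{S}}$ you do exactly what the paper does: the paper's entire proof is the single computation that $\Phi(A)=XA+AY$ implies $\Phi^{*,\delta_s}(A)$ is again of left-plus-right multiplication form. (The paper in fact writes $\Phi^\dagger(A)=Y^*A+AX^*$, a harmless transposition typo; your $X^*A+AY^*$ is the correct one.)

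Where you differ is on $\widehat{\mathfrak{H}}_{\mathcal{S}}^\perp$. The paper says only ``evidently it suffices to prove the invariance of $\widehat{\mathfrak{H}}_{\mathcal{S}}$'' and, earlier, ``of course, once the invariance of one subspace is shown, the invariance of the other follows''. You correctly observe that this is \emph{not} a formal consequence, since $\Phi\mapsto\Phi^{*,\delta_s}$ is not a $\widehat{\mathfrak{H}}$-isometry, and you fill the gap by a trace-cyclicity argument. What your computation is really establishing (and is presumably what the paper has in mind) is that conjugation $\Xi\mapsto\cM_{\delta_s}^{-1}\Xi\,\cM_{\delta_s}$, being conjugation by a self-adjoint operator on $\fH$, is itself a \emph{self-adjoint} linear operator on $\widehat{\mathfrak{H}}$; a self-adjoint operator preserving a subspace automatically preserves its orthogonal complement. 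Combined with the fact that $\Phi\mapsto\Phi^\dagger$ is anti-unitary on $\widehat{\mathfrak{H}}$, this justifies the paper's ``of course''. Your closing diagnosis is also on point: for general $m$ the operator $\cM_m$ is still self-adjoint on $\fH$, so conjugation by it is still self-adjoint on $\widehat{\mathfrak{H}}$ --- what fails is that this conjugation no longer preserves $\widehat{\mathfrak{H}}_{\mathcal{S}}$, because $\cM_m$ is not a single two-sided multiplication.
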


\begin{proof} Evidently it suffices to prove the invariance of $\widehat{\mathfrak{H}}_{\mathcal{S}}$. Let $\Phi(A) = X A + AY$. Then $\Phi^\dagger(A) = Y^*A + AX^*$ and so 
\begin{equation}\label{gsw4}
\Phi^{*,\delta_s}(A) = (\Delta^{-s}Y^*)A + A(\Delta^{1-s}(G)X^*) \in \widehat{\mathfrak{H}}_{\mathcal{S}}\ .
\end{equation}
\end{proof}

\section{Self-adjointness for the KMS inner product}

It is now very easy to determine the structure of $CP_{KMS} := CP_{\delta_{1/2}}$. In this case, \eqref{mb4}
becomes
\begin{equation}\label{mb4KMS}
(b_{\Phi,m})_{\alpha,\beta}  =  \sqrt{\lambda_{\alpha_2}}(c_\Phi)_{\alpha,\beta} \sqrt{\lambda_{\beta_2}}\ ,
\end{equation}
Therefore, for Hermitian $\Phi$, $B_{\Phi,KMS}$ is positive semi-definite if and only if $C_\Phi$ is positive semidefinite; i.e., if and only if $\Phi$ is CP.  

\begin{thm}\label{Main2}  Let $\mathfrak{K}$ be the real vector space consisting of all $V\in \cM_N(\C)$ such that $\Delta^{-1/2} V = V^*$.  The extremal elements $\Phi$  of $CP_{KMS}$ are precisely the elements of the form 
\begin{equation}\label{KMSCP}
\Phi(A) = V^*AV\ , \qquad V\in \mathfrak{K}\ .
\end{equation}
Every map in $CP_{KMS}$ is a linear combination of at most $N^2$ such maps.
\end{thm}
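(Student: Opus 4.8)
The plan is to leverage the observation just made before the theorem statement: for Hermitian $\Phi$, the matrix $B_{\Phi,\mathrm{KMS}}$ defined in \eqref{mb4KMS} is obtained from $C_\Phi$ by conjugation with the positive diagonal matrix $\mathrm{diag}(\sqrt{\lambda_{\alpha_2}})$, so $B_{\Phi,\mathrm{KMS}} \geq 0$ if and only if $C_\Phi \geq 0$, i.e. if and only if $\Phi$ is CP. By Theorem~\ref{main1}, KMS self-adjointness of a Hermitian $\Phi$ is equivalent to $UB_{\Phi,\mathrm{KMS}} = B_{\Phi,\mathrm{KMS}}U$, where $U$ is the anti-unitary $(Uv)_\alpha = \overline{v_{\alpha'}}$; under the identification of $\C^{N^2}$ with $\cM_N(\C)$ this is the map $V\mapsto V^*$. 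So $CP_{\mathrm{KMS}}$ corresponds exactly to positive semi-definite $N^2\times N^2$ matrices $B$ commuting with the anti-unitary involution $U$.

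The first step is to show every such $B$ is a sum of at most $N^2$ rank-one positive matrices $|w\rangle\langle w|$ with $Uw = w$ (equivalently, with the corresponding matrix $V$ satisfying $V^* = V$ in the appropriate twisted sense — concretely, $\sqrt{\lambda_{\alpha_2}}\,V$, read as a matrix, is "$U$-real"). Indeed, since $B$ commutes with the anti-unitary self-adjoint $U$, $U$ restricts to a real structure on the eigenspaces of $B$; choosing a $U$-fixed orthonormal basis of each eigenspace (possible precisely because $U$ is anti-unitary and self-adjoint, hence a conjugation) and scaling the basis vectors by the square roots of the eigenvalues gives a decomposition $B = \sum_{k=1}^{r} |w_k\rangle\langle w_k|$ with $Uw_k = w_k$ and $r = \mathrm{rank}\,B \leq N^2$. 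Undoing the diagonal conjugation, $C_\Phi = \sum_k |z_k\rangle\langle z_k|$ where $z_k$ corresponds (via the identification of the excerpt, with entries indexed by $\cJ_N$) to a matrix $V_k$, and translating back through the characteristic-matrix-to-Kraus dictionary of Lemma~\ref{CPchar} gives $\Phi(A) = \sum_k V_k^* A V_k$. The condition $Uw_k = w_k$ must be unwound to read exactly as $\Delta^{-1/2}V_k = V_k^*$; here one uses \eqref{modeig2}, namely $\Delta E_{(i,j)} = \lambda_i\lambda_j^{-1}E_{(i,j)}$, so that $\Delta^{-1/2}$ acts by $\sqrt{\lambda_j/\lambda_i}$, and the scaling $w_k = \mathrm{diag}(\sqrt{\lambda_{\alpha_2}})z_k$ is precisely what converts $z_k^* = z_k$-type symmetry into $\Delta^{-1/2}V_k = V_k^*$. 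This proves that every element of $CP_{\mathrm{KMS}}$ is a linear combination (with nonnegative coefficients, which can be absorbed into the $V_k$ by rescaling) of at most $N^2$ maps of the form \eqref{KMSCP}.

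Next I would verify that each map $\Phi(A) = V^*AV$ with $V\in\mathfrak{K}$ genuinely lies in $CP_{\mathrm{KMS}}$: complete positivity is immediate from the single-term Kraus form, and KMS self-adjointness follows from $\Delta^{-1/2}V = V^*$ by a direct check that $\langle B, V^*AV\rangle_{\mathrm{KMS}} = \langle V^*BV, A\rangle_{\mathrm{KMS}}$, or equivalently via \eqref{mb3} with the rank-one characteristic matrix. Then the extremality claim: if $V\in\mathfrak{K}$ and $\Phi(A) = V^*AV$ satisfies $\Phi \geq t\Psi$ for some $t > 0$ and $\Psi\in CP_{\mathrm{KMS}}$, then at the level of characteristic matrices $C_\Phi = |z\rangle\langle z|$ is rank one, so $C_\Phi - tC_\Psi \geq 0$ forces $C_\Psi$ to be a nonnegative multiple of $|z\rangle\langle z|$, hence $\Psi$ is a multiple of $\Phi$; conversely, any $\Phi\in CP_{\mathrm{KMS}}$ whose characteristic matrix has rank $\geq 2$ is, by the decomposition above, a sum of two nonproportional elements of $CP_{\mathrm{KMS}}$ and so not extremal. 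Combined with the fact that rank-one $C_\Phi$ commuting with $U$ corresponds exactly to $\Phi(A) = V^*AV$ with $V\in\mathfrak{K}$, this identifies the extreme rays precisely as claimed.

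The main obstacle I anticipate is bookkeeping rather than conceptual: getting the correspondence between "$|w\rangle\langle w|$ with $Uw = w$" and "$\Delta^{-1/2}V = V^*$" exactly right, including the diagonal rescaling by $\sqrt{\lambda_{\alpha_2}}$ and the index conventions $\alpha' = (j,i)$, and confirming that the anti-unitary $U$ being self-adjoint really does guarantee a $U$-fixed orthonormal basis on each eigenspace of $B$ (this is the standard fact that a conjugation on a complex Hilbert space admits an orthonormal basis of fixed vectors, applied eigenspace by eigenspace since $B$ and $U$ commute). Once that linear-algebra lemma about conjugations is in hand, the rest is assembling pieces already proved in the excerpt.
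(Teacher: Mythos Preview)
Your proposal is correct and follows essentially the same route as the paper: identify $CP_{\mathrm{KMS}}$ with positive semi-definite $B$ commuting with the conjugation $U$, decompose such $B$ into rank-one pieces supported on $U$-fixed vectors, and translate back to Kraus form to obtain the condition $\Delta^{-1/2}V = V^*$. The only cosmetic differences are that the paper first allows $Uu = \pm u$ and then absorbs the sign via $V\mapsto iV$, and that the paper constructs an explicit real orthonormal basis $\{G_\alpha\}$ of $\mathfrak{K}$ rather than invoking the general fact that a conjugation admits a fixed orthonormal basis on each eigenspace; your handling of both points is cleaner and entirely sound.
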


 \begin{proof}
If $\Phi$ is extremal in the set of KMS self-adjoint CP maps, then necessarily $C_\Phi$  and $B_{\Phi,KMS}$ are rank one, because if $B_{\Phi,KMS}$ is not rank one, its spectral decomposition would allow it to be written as a sum of two positive matrices $B_1$ and $B_2$, neither a multiple of the other, and each commuting with $U$, the anti-unitary operator defined in  Theorem~\ref{main1}.  By Theorem~\ref{main1},  this would induce a decomposition of $\Phi$ into the sum of two KMS self-adjoint CP maps. It follows that $\Phi$ is extremal in the set of KMS self-adjoint CP maps if and only if 
$B = | u \rangle \langle u|$
where $u$ is an eigenfunction of $U$; i.e., $Uu = \pm u$. 
Then 
$$(c_\Phi)_{\alpha,\beta} =   \frac{1}{\sqrt{\lambda_{\alpha_2}}} \overline{u_\alpha} u_\beta  \frac{1}{\sqrt{\lambda_{\beta_2}}}$$
Thus if we define 
${\displaystyle 
V_u := \sum_\beta \frac{1}{\sqrt{\lambda_{\beta_2}}} u_\beta E_\beta
}$,
then $\Phi(A) = V_u^*AV_u^{\phantom{*}}$.   

Next, since 
$$\Delta^{-1/2}E_\beta = \frac{\sqrt{\lambda_{\beta_2}}}{\sqrt{\lambda_{\beta_1}}}E_\beta\ ,
$$
and hence, since $\overline{u_\beta} = \pm u_{\beta'}$, 
$$
\Delta^{-1/2}V_u^{\phantom{*}} = \sum_\beta \frac{1}{\sqrt{\lambda_{\beta_1}}}u_\beta  E_\beta = \pm \sum_\beta \frac{1}{\sqrt{\lambda_{\beta_1}}}  \overline{ u_{\beta'}} E_\beta
= \pm \sum_\beta \frac{1}{\sqrt{\lambda_{\beta_2}}}  \overline{ u_{\beta}} E_{\beta'}  = \pm V_u^*\ .
$$
Now suppose that $V$ is such that $\Delta^{-1/2}V = \pm V^*$, and $\Phi(A) = V^*AV$.  Then  $\Phi^\dagger (A) = VAV^*$, and then by Lemma~\ref{adlem},
$$\Phi^{*,KMS}(A) = \sigma^{-1/2}(V(\sigma^{1/2} A \sigma^{1/2})V^*)\sigma^{-1/2} = (\Delta^{-1/2}V)^* A \Delta^{1/2}V = V^*AV = \Phi(A)\ .$$

Finally, observe that if $\Delta^{-1/2}V = - V^*$, then $\Delta^{-1/2}(iV) = (iV)^*$ and of course $V^* AV = (iV)^* A (iV)$, so we need only  concern ourselves with $V$ such that $\Delta^{-1/2}V = V^*$.

Now  equip $\mathfrak{K}$ with the inner product
$$
\langle V,W\rangle_{\mathfrak{K}} =  \Re\left( \langle V,W\rangle_{\mathfrak{H}}\right)\ .
$$

We will make use of the following  orthonormal basis for $\mathfrak{K}$:
Let $\{E_\alpha\}$ be a matrix unit basis associated to $\sigma$.  First define
\begin{equation}\label{omegadef}
\omega_\alpha := \log \lambda_{\alpha_1} - \log \lambda_{\alpha_2}\ ,
\end{equation}
and note that $\Delta^{1/2}E_\alpha = e^{\omega_\alpha/2}E_\alpha$.
 For $\alpha = (\alpha_1,\alpha_2)$ such that $\alpha_1 < \alpha_2$, define
\begin{equation}\label{GB1_KMS}
G_\alpha := \frac{1}{i\sqrt{2\cosh(\omega_\alpha/2)}}( e^{\omega_\alpha /4} E_\alpha  -   e^{-\omega_\alpha /4} E_\alpha^*)\ .
\end{equation}
For $\alpha = (\alpha_1,\alpha_2)$ such that $\alpha_1 \geq  \alpha_2$, define
\begin{equation}\label{GB2_KMS}
G_\alpha :=  \frac{1}{\sqrt{2\cosh(\omega_\alpha/2)}} e^{\omega_\alpha /4} E_\alpha  +   e^{-\omega_\alpha /4} E_\alpha^*\ .
\end{equation}
Then one readily checks that  for all $\alpha$, 
\begin{equation}\label{GB3_KMS}
 \Delta^{-1/2} (G_\alpha ) =  G_\alpha^*\ ,
\end{equation}
and that 
$\{G_\alpha\}_{\alpha\in \cJ_N}$ is orthonormal in $\mathfrak{K}$.

We now show that  $\{G_\alpha\}_{\alpha\in \cJ_N}$  is a basis for the real linear space in question. 
Suppose that $V \in \mathfrak{K}$. Let $\{E_\alpha\}$ be the modular basis out of which the orthonormal basis $\{G_\alpha\}$ was constructed.  We then expand
 $$V = \sum_{\alpha} a_\alpha e^{\omega_\alpha/4} E_\alpha\ .$$
 We compute
 $$\Delta^{-1/2}(V)  = \sum_{\alpha} a_\alpha e^{-\omega_\alpha/4} E_\alpha
 \qquad{\rm and}\qquad  
 V^* = \sum_{\alpha} \overline{a_\alpha} e^{\omega_\alpha/4} E_\alpha^*  = 
 \sum_{\alpha} \overline{a_{\alpha'}}e^{\omega_{\alpha'}/4} E_{\alpha}\ . $$
 It follows that since  $\Delta^{-1/2}(V) = V^*$, then $a_\alpha e^{-\omega_\alpha/4} = \overline{a_{\alpha'}}e^{\omega_{\alpha'}/4} =
 \overline{a_{\alpha'}}e^{-\omega_{\alpha}/4}$, and hence $a_\alpha = \overline{a_{\alpha'}}$.    Therefore, if $a_\alpha = x_\alpha+iy_\alpha$
 is the decomposition of $a_\alpha$ into its real and imaginary parts,
 \begin{equation*}
 a_\alpha e^{\omega_\alpha/4} E_\alpha +  
 a_{\alpha'} e^{\omega_{\alpha'}/4} E_{\alpha'}  = x_\alpha[ e^{\omega_\alpha/4} E_\alpha + e^{-\omega_\alpha/4} E_\alpha^*] 
 + iy_\alpha[ e^{\omega_\alpha/4} E_\alpha - e^{-\omega_\alpha/4} E_\alpha^*]
\end{equation*}
 Now consider any $\alpha = (\alpha_1,\alpha_2)$ with $\alpha_1 > \alpha_2$.   Then we have 
 $$ a_\alpha e^{\omega_\alpha/4} E_\alpha +  
 a_{\alpha'} e^{\omega_{\alpha'}/4} E_{\alpha'}  =  x_\alpha G_\alpha + y_\alpha G_{\alpha'}\ ,$$
 while if $\alpha_1 = \alpha_2$, $a_\alpha = a_{\alpha'} =x_\alpha$ and $a_\alpha e^{\omega_\alpha/4} E_\alpha +  
 a_{\alpha'} e^{\omega_{\alpha'}/4} E_{\alpha'} =2x_\alpha E_\alpha = x_\alpha G_\alpha$.  Hence $V$ is a real linear combination of the 
 $\{G_\alpha\}$. 
\end{proof}

\begin{thm}\label{RNlem}
Let $\Phi$ and $\Psi$ be two  CP maps that are KMS self-adjoint.  Let $\Phi(A) =  \sum_{j=1}^M V_j^*A V_j$ be a minimal Kraus representation of $\Phi$ with each $V_j\in \mathfrak{K}$. Then $\Phi - \Psi$ is CP if and only if there exists  a real  $M\times M$ matrix $T$  such that $0 \leq T \leq \one$ and
\begin{equation}\label{baskrid3}
 \Psi(A) = \sum_{i,j=1}^M T_{i,j} V_i^* A V_j\ .
\end{equation}
\end{thm}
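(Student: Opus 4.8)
The plan is to deduce this from the matricial form of Arveson's Radon--Nikodym theorem proved in the appendix: for \emph{any} minimal Kraus representation $\Phi(A)=\sum_{j=1}^M V_j^*AV_j$ and any CP map $\Psi$, one has that $\Phi-\Psi$ is CP if and only if there is a complex $M\times M$ matrix $T$ with $0\le T\le\one$ and $\Psi(A)=\sum_{i,j}T_{i,j}V_i^*AV_j$, and such a $T$ is unique --- because $\{V_1,\dots,V_M\}$ linearly independent forces the operators $\{\#(V_i^*\otimes V_j)\}_{i,j}$ to be linearly independent in $\widehat{\mathfrak{H}}$ (their Gram matrix, by \eqref{orth}, is the Kronecker product of the two positive definite Gram matrices of $\{V_i^*\}$ and $\{V_j\}$). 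The only ingredient beyond Arveson is the hypothesis that $\Phi,\Psi$ are KMS self-adjoint with $V_j\in\mathfrak{K}$, and the entire content of the theorem is that this forces the a priori complex $T$ to be real.

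The ``if'' direction is then immediate: a real matrix $T$ with $0\le T\le\one$ is in particular a complex matrix with $0\le T\le\one$, so Arveson's theorem gives that $\Phi-\Psi$ is CP.

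For the converse, assume $\Phi-\Psi$ is CP and let $T$ be the unique complex matrix with $0\le T\le\one$ and $\Psi=\Psi_T$, where $\Psi_T(A):=\sum_{i,j}T_{i,j}V_i^*AV_j=\sum_{i,j}T_{i,j}\,\#(V_i^*\otimes V_j)(A)$. I must show $T$ equals its entrywise conjugate $\overline T$. Since passing to the $\mathfrak{H}$-adjoint conjugates the scalar coefficients, and $(\#(V_i^*\otimes V_j))^\dagger=\#(V_i\otimes V_j^*)$, we get $\Psi_T^\dagger(A)=\sum_{i,j}\overline{T_{i,j}}\,V_iAV_j^*$. By Lemma~\ref{adlem}, $\Psi_T^{*,KMS}=\mathcal{M}_{\delta_{1/2}}^{-1}\circ\Psi_T^\dagger\circ\mathcal{M}_{\delta_{1/2}}$ with $\mathcal{M}_{\delta_{1/2}}(A)=\sigma^{1/2}A\sigma^{1/2}$. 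Conjugating each term by $\sigma^{\pm1/2}$ and using $V_j\in\mathfrak{K}$, i.e. $\sigma^{-1/2}V_j\sigma^{1/2}=\Delta^{-1/2}V_j=V_j^*$ and hence also $\sigma^{1/2}V_j^*\sigma^{-1/2}=V_j$, the two conjugations collapse:
\[
\Psi_T^{*,KMS}(A)=\sum_{i,j}\overline{T_{i,j}}\,(\sigma^{-1/2}V_i\sigma^{1/2})\,A\,(\sigma^{1/2}V_j^*\sigma^{-1/2})=\sum_{i,j}\overline{T_{i,j}}\,V_i^*AV_j=\Psi_{\overline T}(A).
\]
Since $\Psi$ is KMS self-adjoint, $\Psi_T=\Psi=\Psi^{*,KMS}=\Psi_{\overline T}$, and by the uniqueness of the coefficients (the linear independence of $\{\#(V_i^*\otimes V_j)\}$ noted above) this gives $T=\overline T$. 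As $T$ is also Hermitian with $0\le T\le\one$, it is a real matrix with $0\le T\le\one$, completing the proof.

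The one step that needs care is the middle computation: keeping straight the conjugate-linearity of $\Phi\mapsto\Phi^\dagger$ in the coefficients $T_{i,j}$, and checking that conjugation by $\sigma^{\pm1/2}$ carries $\#(V_i^*\otimes V_j)$ back to itself --- which holds precisely because each $V_j$ was taken in $\mathfrak{K}$. Everything else is a direct appeal to the appendix's matricial Arveson theorem together with the standard linear-independence fact underlying the uniqueness of $T$.
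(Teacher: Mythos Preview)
Your proof is correct and follows essentially the same route as the paper: invoke Arveson's Radon--Nikodym theorem to obtain a unique complex $T$ with $0\le T\le\one$, then compute $\Psi^{*,KMS}$ using $V_j\in\mathfrak{K}$ to see that it corresponds to $\overline{T}$, and conclude $T=\overline{T}$ by uniqueness. You are slightly more explicit than the paper in justifying the uniqueness of $T$ via linear independence of $\{\#(V_i^*\otimes V_j)\}$, but the argument is otherwise identical.
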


\begin{proof} Suppose that  $\Phi - \Psi$ is CP.  Then by Arveson's Theorem, there exists a uniquely determined  $T$ with $0 \leq T \leq \one$ such that \eqref{baskrid3} is valid. Since $\Psi$ is KMS self-adjoint,
$$\Psi(A) = \sum_{i,j=1}^M \overline{T_{i,j}} \sigma^{-1/2}  V_i \sigma^{1/2}A \sigma^{1/2} V_j^j \sigma^{-1/2} = \sum_{i,j=1}^M \overline{T_{i,j}} V_i^* A V_j\ .
$$
By the uniqueness, $T$ is real.

Conversely, suppose that $\Psi$ has the form specified in  \eqref{baskrid3} with $0 \leq T \leq \one$ and $T$ is  real.  Then $\Phi$ is CP and KMS self-adjoint, and since $T \geq 0$, $\Psi$ is CP, 
and  since $T \leq \one$, $\Phi \geq \Psi$.  Finally, since $T$ is real, $\Psi$ is KMS self-adjoint.
\end{proof}

\begin{thm} Let $\Phi$ be a unital CP map that is  KMS self-adjoint and let $\Phi(A) = \sum_{j=1}^M V_j^*A V_j$ be a minimal Kraus representation of $\Phi$ with $V_j\in \mathfrak{K}$ for each $j$.  Then $\Phi$ is an extreme point of the set of unital CP maps that are KMS self-adjoint if and only if
\begin{equation}\label{liset}
\{ V_i^*V_j + V_j^*V_i\ :\ 1\leq i \leq j \leq M\}
\end{equation}
 is linearly independent over the real numbers.   
\end{thm}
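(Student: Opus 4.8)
The plan is to combine the Radon--Nikodym description of KMS self-adjoint CP maps dominated by $\Phi$ (Theorem~\ref{RNlem}) with the reformulation of extremality for unital maps recalled in the introduction, thereby reducing the statement to a linear-algebra fact about the symmetrized products $V_i^*V_j+V_j^*V_i$. Write $CP_{KMS}(\one)$ for the set of unital CP maps that are KMS self-adjoint, and note at the outset that unitality of $\Phi$ means $\Phi(\one)=\sum_{j=1}^M V_j^*V_j=\one$.

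First I would record two preliminary facts. (i) Since the Kraus representation is minimal, $\{V_1,\dots,V_M\}$ is linearly independent, hence so is $\{V_1^*,\dots,V_M^*\}$; by the orthogonality relation \eqref{orth} the Gram matrix of the $M^2$ operators $\#(V_i^*\otimes V_j)\colon A\mapsto V_i^*AV_j$ is the Kronecker product of the two (positive definite) Gram matrices of $\{V_i^*\}$ and $\{V_j\}$, hence positive definite, so these operators are linearly independent in $\widehat{\fH}$; consequently $\sum_{i,j}R_{i,j}V_i^*AV_j\equiv 0$ forces $R=0$ for any $M\times M$ matrix $R$. (ii) For a real symmetric $M\times M$ matrix $S$ one can rewrite $\sum_{i,j}S_{i,j}V_i^*V_j=\sum_{1\le i\le j\le M}\widetilde S_{i,j}\,(V_i^*V_j+V_j^*V_i)$ with $\widetilde S_{i,j}=S_{i,j}$ for $i<j$ and $\widetilde S_{i,i}=\tfrac12 S_{i,i}$; since $S\mapsto\widetilde S$ is a bijection of the $\binom{M+1}{2}$-dimensional space of real symmetric matrices onto the coefficient space indexed by $\{(i,j):1\le i\le j\le M\}$, the linear map $S\mapsto\sum_{i,j}S_{i,j}V_i^*V_j$ is injective precisely when $\{V_i^*V_j+V_j^*V_i:1\le i\le j\le M\}$ is linearly independent over $\R$. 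This is the point at which the hypothesis enters.

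Next I would treat the two implications. For sufficiency, suppose $\{V_i^*V_j+V_j^*V_i:i\le j\}$ is linearly independent and $\Phi=t\Psi_1+(1-t)\Psi_2$ with $0<t<1$ and $\Psi_1,\Psi_2\in CP_{KMS}(\one)$; then $\Phi-t\Psi_1=(1-t)\Psi_2$ is CP and $t\Psi_1$ is a KMS self-adjoint CP map, so Theorem~\ref{RNlem} gives a real $M\times M$ matrix $T$ with $0\le T\le\one$ (hence $T=T^*$, so $T$ is real symmetric) and $t\Psi_1(A)=\sum_{i,j}T_{i,j}V_i^*AV_j$. Evaluating at $\one$ and using $\Phi(\one)=\one$ yields $\sum_{i,j}(T-t\one)_{i,j}V_i^*V_j=0$, so $T=t\one$ by (ii), whence $\Psi_1=\Phi$ and, symmetrically, $\Psi_2=\Phi$; thus $\Phi$ is extremal. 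For necessity, if $\{V_i^*V_j+V_j^*V_i:i\le j\}$ is linearly dependent, choose a nonzero real symmetric $S$ with $\sum_{i,j}S_{i,j}V_i^*V_j=0$ and, for $0<\eps<\tfrac12\|S\|^{-1}$, put $T_\pm:=\tfrac12\one\pm\eps S$ so that $0<T_\pm<\one$; the maps $\Psi_\pm(A):=\sum_{i,j}(2T_\pm)_{i,j}V_i^*AV_j$ then have $\tfrac12\Psi_\pm$ satisfying the hypotheses of the converse half of Theorem~\ref{RNlem} with matrix $T_\pm$, so $\tfrac12\Psi_\pm\le\Phi$ and $\tfrac12\Psi_\pm\in CP_{KMS}$, while $\tfrac12\Psi_\pm(\one)=\sum_{i,j}(T_\pm)_{i,j}V_i^*V_j=\tfrac12\one$ gives $\Psi_\pm\in CP_{KMS}(\one)$; since $T_++T_-=\one$ we get $\Phi=\tfrac12\Psi_++\tfrac12\Psi_-$, and $\Psi_+-\Psi_-$ corresponds to the nonzero matrix $4\eps S$, so by (i) $\Psi_+\ne\Psi_-$ and $\Phi$ is not extremal.

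The substantive content is really the bijectivity observation (ii); everything else is bookkeeping, and I do not anticipate any genuine obstacle. The points to verify carefully are that the matrix $T$ produced by Theorem~\ref{RNlem} is symmetric and not merely real (automatic from $0\le T\le\one$), that multiplying by $t$ and by $2$ keeps one inside the class of KMS self-adjoint CP maps to which Theorem~\ref{RNlem} and its converse apply, and that the ``$\Phi\ge t\Psi$'' formulation of extremality recalled in the introduction is interchangeable with the convex-combination formulation used above.
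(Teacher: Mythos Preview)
Your proof is correct and follows essentially the same strategy as the paper's: both invoke Theorem~\ref{RNlem} to reduce extremality to the injectivity of $S\mapsto\sum_{i,j}S_{i,j}V_i^*V_j$ on real symmetric matrices, which is equivalent to the linear independence in \eqref{liset}. The only cosmetic differences are that the paper uses the $\Phi\ge t\Psi$ formulation directly and constructs a single perturbed map $\Psi$ in the necessity direction (with $T=\one+tB$, $0\le T\le 2\one$) rather than your symmetric pair $\Psi_\pm$, and leaves your observations (i) and (ii) implicit.
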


\begin{proof}  Suppose that the set in \eqref{liset} is linearly independent over the real numbers. Suppose that $\Psi$ is unital, CP and KMS self-adjoint, and that for some $0 < t < 1$, $\Phi -t\Psi$ is CP. We must show that $\Psi=\Phi$. 
By Lemma~\ref{RNlem}, there is a real $M\times M$ matrix $T$  such that $0 \leq T \leq \one$ and 
$$t\Psi(A) = \sum_{i,j=1}^M T_{i,j} V_i^* A V_j\ .$$
Then
$$t\one = t\Psi(\one) = \sum_{i,j=1}^M T_{i,j} V_i^*  V_j \qquad{\rm and}\qquad 
t\one = t\Phi(\one) = \sum_{i,j=1}^M t\delta_{i,j} V_i^*V_j\ .$$
Therefore
$$
0 = \sum_{i,j=1}^M ( t\delta_{i,j} - T_{i,j}) V_i^*V_j\ .
$$
Then by the linear independence, $t\delta_{i,j} - T_{i,j} =0$ for each $i,j$. Thus $t\Psi = t\Phi$, and $\Phi$ is extreme. 

Now suppose that   the set in \eqref{liset} is not linearly independent over the reals. Then there is an $M\times M$ real symmetric matrix $B$ such that $B_{i,j}$ are not all zero and $\sum_{i,j} B_{i,j}V_i^*V_j = 0$. Choose some $t > 0$ such that
$$ 0 \leq T :=  \one + tB \leq 2\one\ ,$$
and define $\Psi$ by 
$$
\Psi(A) = \sum_{i,j=1}^M T_{i,j} V_i^* A V_j\ .
$$
Then $\Psi$ is CP, KMS self-adjoint, and 
$$\Psi(\one) = \sum_{i,j=1}^M (\delta_{i,j} + tB_{i,j})V_i^* A V_j = \sum_{i,j=1}^M \delta_{i,j} V_i^* \one V_j =\Phi(\one) = \one \ .$$
Thus $\Psi$ is unital and since $\Psi \leq \frac12 \Phi$, $\Phi$ is not extreme. 
 \end{proof}
 
 \subsection{$QMS_{KMS}$}
 
The problem of determining the structure of QMS generators that are self-adjoint on $\H_{KMS}$ has been studied in \cite{FU07}, but we give a simpler approach that goes further in one important aspect. 

Let $\cL\in QMS_{KMS}$ and  write it in the form 
\begin{equation}\label{gsw51}
\cL(A) = (G^*A + AG) +  \sum_{j=1}^M V_j^*A V_j
\end{equation}
where $\{V_1,\dots,V_M\}$ is linearly independent, which we may always do for any QMS generator. Replacing each $V_j$ by $V_j - \tr[V_j]\one$ and absorbing the difference into $G$, 
we may assume without loss of generality that for all $j$ $\tr[V_j] =0$.  By Lemma~\ref{redchar},  $\Psi$, given by $\Psi(A) = \sum_{j=1}^M V_j^* A V_j$ belongs to
$\widehat{\mathfrak{H}}_{\mathcal{S}}^\perp$.  Under these conditions on $\{V_1,\dots,V_M\}$,  \eqref{gsw51} is the orthogonal decomposition of $\cL$ into its 
components in $\widehat{\mathfrak{H}}_{\mathcal{S}}$ and $\widehat{\mathfrak{H}}_{\mathcal{S}}^\perp$.

By Lemma~\ref{orthinv}, each component is individually self-adjoint.   Let $\Phi$ denote the map $\Phi(A) = G^*A + AG$. Then $\Phi^\dagger(A) =  GA + AG^*$ and hence
$$
\Phi^{*,KMS}(A) = (\Delta^{-1/2} G) A  + A (\Delta^{1/2} G^*) 
$$

Furthermore, adding a purely imaginary multiple of $\one$ to $G$ has no effect on the operation $A \mapsto G^*A + AG$, and hence we may assume without loss of generality that $\tr[G] \in \R$.  
Then $\tr[\Delta^{-1/2}G]\in \R$, and by Remark~\ref{oneXY}, we must have $\Delta^{-1/2}G = G^*$.

Thus, a QMS generator $\cL$  is self-adjoint on $\H_{KMS}$ if and only if it can be written in the form
\begin{equation}\label{gsw14}
\cL(A) = G^*A + A G +  \sum_{j=1}^M V_j^* A V_j
\end{equation}
where $\tr[G]\in \R$, $\{V_1,\dots,V_M\}$ is linearly independent, $\Delta^{-1/2}G = G^*$ and $\Delta^{-1/2}V_j = V_j^*$ as well as $\tr[V_j] =0$ for each $j$. 
As explained in the introduction, this much one finds in \cite{FU07}. However, there is a compatibility question to be dealt with. Since $\cL(\one) =0$, we must have 
\begin{equation}\label{gsw15}
0 = G^* +  G +  \sum_{j=1}^M V_j^* V_j\ .
\end{equation}
That is, writing $G = H+iK$, $H$ and $K$ self-adjoint,
\begin{equation}\label{gsw16VastV}
H = \frac12 \sum_{j=1}^M V_j^* V_j\ .
\end{equation}
Now in general $\Delta^{-1/2}(H)$ is not even self-adjoint, so that $\Delta^{-1/2}(H) = H$ does not generally hold true.

This raises the following question:  Given any CP map $\Psi$ with a minimal Kraus representation $\Psi(A) = \sum_{j=1}^M V_j^* A V_j$ such that for each $j$ $\Delta^{-1/2}V_j = V_j^*$ and $\tr[V_j] =0$, when does there exist a $G$ such that 
\begin{equation}\label{gsw16}
\cL(A) := G^*A + AG + \Psi(A)
\end{equation}
belongs to $QMS_{KMS}$?
By what has been noted above, we may as well require $\tr[G]\in \R$, and then we  must have $\Delta^{-1/2}G = G^*$, and we must have $G = H+iK$ , $H$ and $K$ self-adjoint, with $H$ specified by \eqref{gsw16VastV}.   It turns out that there always exists a unique choice of $K$ such that $\Delta^{-1/2}G = G^*$, and thus the answer to the question just raised is ``always''. 

\begin{thm}\label{Main7}  There is a one-to-one correspondence between elements $\cL$ of  $QMS_{KMS}$  and CP maps $ \Psi\in \widehat{\mathfrak{H}}_\mathcal{S}^\perp$ that are self-adjoint on $\H_{KMS}$.   
The correspondence identifies $\Psi$ with $\cL_\Psi$ where
\begin{equation}\label{gsw16}
\cL_\Psi(A) = G^*A + AG + \Psi(A)
\end{equation}
where $G = H+iK$, $H$ and $K$ self-adjoint and given by
\begin{equation}\label{HforKMS}
H := \tfrac12\Psi(\one)
\end{equation}
and 
\begin{equation}\label{gsw17}
K := \frac1i\int_0^\infty e^{-t   \sigma^{1/2}} ( \sigma^{1/2}  H - H  \sigma^{1/2})  e^{-t   \sigma^{1/2}}{\rm d} t\ .
\end{equation}
Furthermore,  for all $\cL_{\Psi_1},\cL_{\Psi_2} \in QMS_{KMS}$,  $\cL_{\Psi_1} - \cL_{\Psi_2} \in QMS$ if and only if $\Psi_1 - \Psi_2 \in CP$; i.e., $\cL_{\Psi_1} \geq \cL_{\Psi_2}$ if and only if 
$\Psi_1 \geq \Psi_2$, and the extreme points of $QMS_{KMS}$ are precisely the generators of the form
$$\cL(A) := G^*A + AG + V^*AV$$
where $\Delta^{-1/2}V = V^*$ and where $G=H+iK$ is given by \eqref{HforKMS} and \eqref{gsw17} for $\Psi(A) = V^*AV$. 
\end{thm}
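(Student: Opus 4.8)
The plan is to leverage the analysis preceding the statement, where it was shown that any $\cL\in QMS_{KMS}$ may be written as in \eqref{gsw14}, that its CP part $\Psi(A)=\sum_{j}V_j^{*}AV_j$ (with the $V_j$ taken traceless) lies in $\widehat{\mathfrak{H}}_{\mathcal{S}}^{\perp}$ by Lemma~\ref{redchar}, and that $A\mapsto G^{*}A+AG$ lies in $\widehat{\mathfrak{H}}_{\mathcal{S}}$. Thus \eqref{gsw14} is exactly the orthogonal decomposition of $\cL$ for $\widehat{\mathfrak{H}}=\widehat{\mathfrak{H}}_{\mathcal{S}}\oplus\widehat{\mathfrak{H}}_{\mathcal{S}}^{\perp}$, and the assignment $\cL\mapsto\Psi$ is simply the orthogonal projection of $QMS_{KMS}$ onto $\widehat{\mathfrak{H}}_{\mathcal{S}}^{\perp}$; in particular it is linear. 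Everything then reduces to reconstructing $G$ from $\Psi$.

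The key lemma I would establish first is: for each self-adjoint $H\in\cM_N(\C)$ there is a unique self-adjoint $K$ with $\tr[K]=0$ such that $G:=H+iK$ satisfies $\Delta^{-1/2}G=G^{*}$, namely the $K$ of \eqref{gsw17}. Indeed, writing $\Delta^{-1/2}G=\sigma^{-1/2}G\sigma^{1/2}$ and multiplying $\Delta^{-1/2}G=G^{*}$ through by $\sigma^{1/2}$ turns it, for $G=H+iK$, into the Lyapunov equation
\[
\sigma^{1/2}K+K\sigma^{1/2}=\tfrac1i\bigl(\sigma^{1/2}H-H\sigma^{1/2}\bigr)\,.
\]
Since $\sigma^{1/2}$ is positive definite, $K\mapsto\sigma^{1/2}K+K\sigma^{1/2}$ is invertible (its eigenvalues in the modular eigenbasis are $\sqrt{\lambda_{\alpha_1}}+\sqrt{\lambda_{\alpha_2}}>0$), so the solution exists and is unique, and differentiating $t\mapsto e^{-t\sigma^{1/2}}(\,\cdot\,)e^{-t\sigma^{1/2}}$ and integrating over $[0,\infty)$ produces the representation \eqref{gsw17}. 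That $K$ is self-adjoint is immediate, since the right-hand side is self-adjoint and the Lyapunov map preserves self-adjointness; and $\tr[K]=0$ follows from \eqref{gsw17} by cyclicity of the trace together with the commutation of $e^{-t\sigma^{1/2}}$ with $\sigma^{1/2}$ (this is the content of the remark after the theorem).

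Granting the lemma, I would set up both maps and check they are mutually inverse. Given KMS self-adjoint CP $\Psi\in\widehat{\mathfrak{H}}_{\mathcal{S}}^{\perp}$, put $H:=\tfrac12\Psi(\one)$, let $K$ be as above, $G:=H+iK$, and define $\cL_\Psi$ by \eqref{gsw16}. Then $\cL_\Psi$ is Hermitian and annihilates $\one$ by construction; since the $G$-term lies in $\widehat{\mathfrak{H}}_{\mathcal{S}}$ it has zero reduced characteristic matrix in a unital basis (Lemma~\ref{orthchar}), so the reduced characteristic matrix of $\cL_\Psi$ equals that of $\Psi$, which is positive semidefinite because $\Psi$ is CP (Lemma~\ref{CPchar}); hence $\cL_\Psi\in QMS$ by Lemma~\ref{redLm}. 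It is KMS self-adjoint by Lemma~\ref{orthinv}: $\Psi$ is by hypothesis, and $A\mapsto G^{*}A+AG$ is because $\Delta^{-1/2}G=G^{*}$. Thus $\cL_\Psi\in QMS_{KMS}$. Conversely, for $\cL\in QMS_{KMS}$ written as \eqref{gsw14} with $\tr[G]\in\R$: the $\widehat{\mathfrak{H}}_{\mathcal{S}}$-part $A\mapsto G^{*}A+AG$ is KMS self-adjoint (Lemma~\ref{orthinv}), which as in the discussion forces $\Delta^{-1/2}G=G^{*}$; since $\Re G$ is moreover pinned down by $\Psi(\one)$ through the unitality relation \eqref{gsw16VastV}, the uniqueness in the lemma gives $\Im G=K$, hence $\cL=\cL_\Psi$. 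In the other order, the CP part of $\cL_\Psi$ is $\Psi$ again, because $\Psi$ already lies in $\widehat{\mathfrak{H}}_{\mathcal{S}}^{\perp}$ while the $G$-term lies in $\widehat{\mathfrak{H}}_{\mathcal{S}}$. This gives the one-to-one correspondence.

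For the order statement, if $\Psi_1-\Psi_2\in CP$ then $\cL_{\Psi_1}-\cL_{\Psi_2}$ is Hermitian, annihilates $\one$, and---the difference of $G$-terms lying in $\widehat{\mathfrak{H}}_{\mathcal{S}}$---shares its reduced characteristic matrix with $\Psi_1-\Psi_2$, which is positive semidefinite, so $\cL_{\Psi_1}-\cL_{\Psi_2}\in QMS$ by Lemma~\ref{redLm}. Conversely, if $\cL_{\Psi_1}-\cL_{\Psi_2}\in QMS$, it is also KMS self-adjoint, hence in $QMS_{KMS}$, so its $\widehat{\mathfrak{H}}_{\mathcal{S}}^{\perp}$-component is CP; by linearity of the projection that component is $\Psi_1-\Psi_2$, so $\Psi_1-\Psi_2\in CP$. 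Hence $\Psi\mapsto\cL_\Psi$ is an order isomorphism, and it carries the extreme rays of the cone $CP_{KMS}\cap\widehat{\mathfrak{H}}_{\mathcal{S}}^{\perp}$ onto the extreme points of $QMS_{KMS}$. By Theorem~\ref{Main2} the extreme rays of $CP_{KMS}$ are the maps $A\mapsto V^{*}AV$ with $\Delta^{-1/2}V=V^{*}$; using Lemma~\ref{redchar} and the essential uniqueness of minimal Kraus representations, those with $\tr[V]=0$ are exactly the extreme rays of the subcone, and since replacing $V$ by a non-traceless matrix obeying $\Delta^{-1/2}V=V^{*}$ only shifts the $G$-term and leaves $\cL_\Psi$ unchanged, the extreme points of $QMS_{KMS}$ have the stated form. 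I expect the only non-routine work to be the Lyapunov step and the bookkeeping needed to keep the two orthogonal decompositions straight throughout, ensuring that $\Psi\mapsto\cL_\Psi$ and its inverse are genuinely mutually inverse and order-respecting.
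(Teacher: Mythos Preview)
Your proposal is correct and follows essentially the same route as the paper: both derive the Lyapunov equation for $K$ from $\Delta^{-1/2}G=G^{*}$, solve it via the integral formula, and use Lemmas~\ref{orthchar}--\ref{redchar} together with Lemma~\ref{redLm} to identify the reduced characteristic matrix of $\cL_\Psi$ with that of $\Psi$ for the order statement, then appeal to Theorem~\ref{Main2} for the extreme rays. You are somewhat more explicit than the paper in checking that $\Psi\mapsto\cL_\Psi$ and the orthogonal projection are mutually inverse, and in justifying why the tracelessness condition on $V$ may be dropped in the description of extreme points (a point the paper leaves implicit); these additions are welcome but do not constitute a different approach.
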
 

\begin{proof}  Consider such a set $\{V_1,\dots V_M\}$ and define 
\begin{equation}
\Psi(A) = \sum_{j=1}^M V_j^* A V_j\ .
\end{equation}
Since $\tr[V_j] = 0$ for each $j$, $ R_\Psi$ is positive semidefinite, and since $\Delta^{-1/2}V_j = V_j^*$ for each $j$,  $\Psi$ is self-adjoint on $\H_{KMS}$.
Then for any choice of $G$, $\cL(A) = G^*A + AG + \Psi(A)$ generates a CP semigroup, and it belongs to $QMS$ if and only if $\cL(\one) =0$, and this is the case if and only if the self-adjoint part of $H := \tfrac12(G+G^*)$ is given by \eqref{gsw16}.
Finally, $\cL$ will be self-adjoint on $\H_{KMS}$ if and only if $K := -i\tfrac12 (G - G^*)$ is  chosen so that $\Delta^{-1/2}(G) = G^*$. 

The equation $\Delta^{-1/2}(G) = G^*$ is equivalent to $(H + i K)\sigma^{1/2} = \sigma^{1/2} (H - i K)$, and rearranging terms we have 
\begin{equation}\label{gsw31}
\sigma^{1/2}  K + K \sigma^{1/2}  = -i( \sigma^{1/2}  H - H  \sigma^{1/2})\ .
\end{equation}
This is a Lyapunov equation, and  for any $X$ self-adjoint, the unique solution $K$ of $\sigma^{1/2}  K + K \sigma^{1/2} = X$ is given by
$$K := \int_0^\infty e^{-t   \sigma^{1/2}} X e^{-t   \sigma^{1/2}}{\rm d} t\ .$$
Indeed,
$$\sigma^{1/2} \left(  \int_0^\infty e^{-t   \sigma^{1/2}} X e^{-t   \sigma^{1/2}}{\rm d} t  \right)   \sigma^{1/2}  = -\int_0^\infty \frac{{\rm d}}{{\rm d}t}e^{-t   \sigma^{1/2}} X e^{-t   \sigma^{1/2}}{\rm d} t = X\ .$$
Thus we must take $K$ to be given by \eqref{gsw17}.   Note that $\tr[K]=0$ as an easy consequence of cyclicity of the trace, and hence $\tr[G]\in \R$.

Now suppose $\cL_{\Psi_1},\cL_{\Psi_2} \in QMS_{KMS}$. Then  $\cL_{\Psi_1} - \cL_{\Psi_2} \in QMS$ if  and only if its reduced density matrix is positive semidefinite. By Lemma~\ref{orthchar} and Lemma~\ref{redchar}, this is the case if and only if $\Psi_1-\Psi_2$ is CP.  The final assertion now follows from Theorem~\ref{Main2}.
\end{proof}

 \section{Self-adjointness for the GNS inner product}
 
 The other case of main interest that is easily handled is the case of self-adjointness for the GNS inner product. While the KMS inner product corresponds to the measure $m = \delta_{1/2}$, the GNS inner product corresponds to the measure $\delta_0$. 
 It turns out that we may as well consider $m = \delta_s$, $s \neq \tfrac12$; the set of CP maps that are self-adjoint on $\H_{\delta_s}$ does not depend on $s\neq \frac12$.  The structure of $QMS_{GNS}$ was worked out by Alicki, and his methods adapt well to the study of $CP(\one)_{GNS}$.
 
 \begin{thm}\label{Main3}  The extremal rays in the cone of  CP maps that are self-adjoint on $\H_{\delta_s}$, $s\neq \frac12$,  are precisely the  maps of the form
\begin{equation}\label{SmsCP}
\Phi(A) = e^{\omega/2}V^* AV +  e^{-\omega/2}V A V^*
\end{equation}
where $\Delta V = e^{\omega} V$, $\omega > 0$, or of the form
\begin{equation}\label{SmsCP2}
\Phi(A) = VAV \ ,
\end{equation}
$\Delta V = V = V^*$
  In particular, every CP map that is self-adjoint on $\H_{\delta_s}$ is a positive linear combination of the operators specified in \eqref{SmsCP} and \eqref{SmsCP2}.
\end{thm}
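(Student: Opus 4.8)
The plan is to push the Bohr-frequency analysis that Alicki used in the GNS case through the characterization of $\langle\cdot,\cdot\rangle_m$-self-adjointness in Theorem~\ref{main1}. Since any CP map is Hermitian, Theorem~\ref{main1} applies, and I would work throughout in the matrix unit basis $\{E_\alpha\}_{\alpha\in\cJ_N}$ associated with $\sigma$, writing $C_\Phi$ for its characteristic matrix and recalling from \eqref{omegadef} that $\omega_\alpha=\log\lambda_{\alpha_1}-\log\lambda_{\alpha_2}$, so $\Delta E_\alpha=e^{\omega_\alpha}E_\alpha$ and $\omega_{\alpha'}=-\omega_\alpha$. Because $(x,y)_{\delta_s}=x^sy^{1-s}$, condition \eqref{mb3} for $m=\delta_s$ reads
\[
(c_\Phi)_{\alpha,\beta}=e^{s\omega_\alpha+(1-s)\omega_\beta}(c_\Phi)_{\beta',\alpha'} .
\]
The first key step is to substitute $(\beta',\alpha')$ for $(\alpha,\beta)$ here and feed the result back into the original identity; this yields $(c_\Phi)_{\alpha,\beta}=e^{(2s-1)(\omega_\alpha-\omega_\beta)}(c_\Phi)_{\alpha,\beta}$, so for $s\neq\tfrac12$ we get $(c_\Phi)_{\alpha,\beta}=0$ whenever $\omega_\alpha\neq\omega_\beta$. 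Hence $C_\Phi$ is block diagonal along the level sets $\cJ_N^{(\omega)}:=\{\alpha:\omega_\alpha=\omega\}$ of the map $\alpha\mapsto\omega_\alpha$, and on the block $C^{(\omega)}$ at level $\omega$ the condition collapses to the $s$-independent identity $(c_\Phi)_{\alpha,\beta}=e^{\omega}(c_\Phi)_{\beta',\alpha'}$; in particular this already shows $CP_{\delta_s}$ is independent of $s\neq\tfrac12$.

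Next I would describe the cone concretely. The involution $\alpha\mapsto\alpha'$ sends $\cJ_N^{(\omega)}$ bijectively onto $\cJ_N^{(-\omega)}$ and preserves $\cJ_N^{(0)}$ setwise. By Lemma~\ref{CPchar}, $\Phi\in CP_{\delta_s}$ iff every block $C^{(\omega)}$ is positive semidefinite. For $\omega>0$, the identity $(c_\Phi)_{\beta',\alpha'}=e^{-\omega}(c_\Phi)_{\alpha,\beta}$ exhibits $C^{(-\omega)}$ as $e^{-\omega}$ times a permutation-plus-transpose of the Hermitian matrix $C^{(\omega)}$; this operation preserves positivity, so once $C^{(\omega)}\geq0$ the block $C^{(-\omega)}$ is automatically $\geq0$ and contains no further data. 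For $\omega=0$, the relation $(c_\Phi)_{\alpha,\beta}=(c_\Phi)_{\beta',\alpha'}$ together with Hermiticity says exactly that $C^{(0)}$ commutes with the anti-unitary involution $J$ given by $(Jv)_\alpha=\overline{v_{\alpha'}}$. So an element of $CP_{\delta_s}$ is the same thing as a choice of positive semidefinite block $C^{(\omega)}$ for each $\omega>0$ among the $\omega_\alpha$ (the $-\omega$ blocks being forced) together with a positive semidefinite $C^{(0)}$ commuting with $J$.

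Then I would read off the extreme rays. If $C_\Phi$ is nonzero on two distinct frequency orbits $\{0\}$ and $\{\pm\omega\}$ (or two different $\{\pm\omega\}$'s), splitting $C_\Phi$ along them writes $\Phi$ as a sum of two non-proportional members of $CP_{\delta_s}$, so an extremal $\Phi$ is supported on a single orbit; on that orbit, extremality of the ray $\R_+\Phi$ is equivalent to extremality of $\R_+C^{(\omega)}$ ($\omega\geq0$) in the relevant positive cone, i.e.\ to $C^{(\omega)}$ being rank one --- for $\omega=0$ one uses that $J$ restricts on each eigenspace of $C^{(0)}$ to an anti-unitary involution, which has an orthonormal basis of fixed vectors, so any higher-rank $C^{(0)}$ splits through $J$-compatible spectral projections. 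Conversely a rank-one block gives an extremal $\Phi$, since a positive summand of a rank-one positive matrix is a scalar multiple of it. Finally I would translate: if $\omega>0$ and $C^{(\omega)}$ has rank one, its one-term minimal Kraus representation provides an operator $W\in\mathrm{span}\{E_\alpha:\omega_\alpha=\omega\}$, hence $\Delta W=e^{\omega}W$, and a short computation with $E_\alpha^*=E_{\alpha'}$ and the relation defining $C^{(-\omega)}$ gives $\Phi(A)=W^*AW+e^{-\omega}WAW^*$; putting $V:=e^{-\omega/4}W$ turns this into \eqref{SmsCP}. If $\omega=0$ and $C^{(0)}$ has rank one, the commutation with $J$ lets one take the single Kraus operator $V$ self-adjoint, and $\Delta V=V$ since all $\omega_\alpha=0$, so $\Phi(A)=VAV$ as in \eqref{SmsCP2}. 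The last sentence of the theorem then follows by decomposing each block $C^{(\omega)}$, $\omega\geq0$, into rank-one pieces via the spectral theorem, choosing a $J$-fixed eigenbasis when $\omega=0$.

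The hard part is really just the first key step --- recognizing that iterating \eqref{mb3} forces the block/frequency structure and the $s$-independence; the rest is bookkeeping. Within that bookkeeping the two points that need care are (i) not treating the $C^{(-\omega)}$ block as independent data, since it is determined by, and automatically as positive as, $C^{(\omega)}$, and (ii) performing every rank-one splitting inside the $\omega=0$ block compatibly with the anti-unitary symmetry $J$, so that the pieces remain self-adjoint on $\H_{\delta_s}$.
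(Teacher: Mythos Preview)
Your argument is correct and follows essentially the same route as the paper: specialize \eqref{mb3} to $m=\delta_s$, iterate to force $(c_\Phi)_{\alpha,\beta}=0$ unless $\omega_\alpha=\omega_\beta$, then analyze the resulting block structure, pairing the $\omega$ and $-\omega$ blocks and using the anti-unitary symmetry on the $\omega=0$ block. The only notable difference is that you verify extremality of the $\omega>0$ maps directly from the rank-one block parametrization (a positive summand of a rank-one matrix is a scalar multiple, and the $-\omega$ block is determined), whereas the paper checks it via Arveson's Radon--Nikodym theorem applied to the minimal Kraus representation $\{W_1,W_2\}=\{e^{-(1-2s)\omega/4}V,\ e^{(1-2s)\omega/4}V^*\}$; your route is slightly more streamlined here.
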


\begin{remark}  The restriction to $\omega > 0$ in \eqref{SmsCP} is not essential; it is to avoid double counting, since the eigenvalues $e^\omega$  of $\Delta$ with $\omega>0$ and with $\omega< 0$ enter only in the precisely paired manner specified in 
\eqref{SmsCP}.  Also, note that replacing $V$ by $e^{i\theta} V$, $\theta\in \R$, has no effect on the map in \eqref{SmsCP2}. There do exist $V$ that are not self-adjoint such that $A\mapsto V^*AV$ is self-adjoint on $\H_{\delta_s}$ and extremal even in the wider class of all CP maps. However, the theorem asserts that in this case one may replace $V$ by an appropriate complex multiple and then $V = V^*$.
\end{remark}

\begin{proof} Specializing to $m= \delta_s$,  \eqref{mb3} becomes
 \begin{equation}\label{mb3GNS}
 (c_\Phi)_{\alpha,\beta} = \frac{\lambda_{\alpha_1}^s\lambda_{\beta_1}^{1-s}}{\lambda_{\alpha_2}^s\lambda_{\beta_2}^{1-s}}  (c_\Phi)_{\beta',\alpha'} = e^{s\omega_\alpha  + (1-s)\omega_\beta} (c_\Phi)_{\beta',\alpha'} \ .
\end{equation}
As a consequence, $(c_\Phi)_{\beta',\alpha'}  = e^{s\omega_{\beta'}+(1-s)\omega_{\alpha'}}(c_\Phi)_{\alpha,\beta}  =  e^{-s\omega_{\beta}-(1-s)\omega_{\alpha}}(c_\Phi)_{\alpha,\beta}$. Altogether, 
\begin{equation}\label{baskrid5}
 e^{(1-2s)\omega_{\alpha}}(c_\Phi)_{\alpha,\beta} =  (c_\Phi)_{\alpha,\beta} e^{(1-2s)\omega_\beta}\ .
\end{equation}
That is, $C_\Phi$ commutes with the diagonal matrix $\Omega$ whose $\alpha$th diagonal entry is $e^{(1-2s)\omega_\alpha}$.  Of course this condition is vacuous for $s=\tfrac12$, but otherwise it is a strong restriction on $C_\Phi$.
  Let us order the entries in such a manner that all indices $\alpha$ for which $e^{\omega_\alpha}$ has the same value are grouped together. Then $C_\Phi$ will have a block structure. In any case,
  as a consequence of \eqref{baskrid5}, for $s\neq \frac12$,
  \begin{equation}\label{baskrid6}
 \omega_\alpha \neq \omega_\beta \ \Rightarrow \  (c_\Phi)_{\alpha,\beta} =0\ .
\end{equation}

The blocks correspond to the distinct eigenvalues of the modular operator. Let $\mu$ be such an eigenvalue and let $\cJ_\mu = \{\alpha \ :\ e^{\omega_\alpha} = \mu\ \}$. Apart from $\mu =1$, which is always an eigenvalue, the eigenvalues come in pairs. Let $\mu' := \frac{1}{\mu}$; then $\alpha \mapsto \alpha'$ is a one-to-one map from $\cJ_\mu$ onto $\cJ_{\mu'}$. For $\alpha,\beta\in \cJ_\mu$, \eqref{mb3GNS} reduces to
$ (c_\Phi)_{\alpha,\beta} = \mu^{(1-2s)}(c_\Phi)_{\beta',\alpha'} =\mu^{(1-2s)}\overline{(c_\Phi)_{\alpha',\beta'}} $, or equivalently,
 \begin{equation}\label{baskrid7}
 \mu^{-(1-2s)/2}(c_\Phi)_{\alpha,\beta}   = \mu^{(1-2s)/2}\overline{(c_\Phi)_{\alpha',\beta'}} \ .
\end{equation}
Hence, for $\mu \neq 1$,  if $U_{\alpha,\beta}$,  $\alpha,\beta\in \cJ_\mu$ is a unitary that diagonalizes the block corresponding to the eigenvalue $\mu$, $\overline{U_{\alpha',\beta'}}$ is a unitary that diagonalizes the block corresponding to the eigenvalue $\frac{1}{\mu}$.  For $\mu =1$,  \eqref{mb3GNS} further reduces to  $(c_\Phi)_{\alpha,\beta}  =\overline{(c_\Phi)_{\alpha',\beta'}}$ which means that this block of $C_\Phi$ is diagonal in an orthonormal basis consisting of eigenvectors of this anti-unitary transformation.  The eigenvectors $v$  are such that  
$$\left(\sum_{\alpha\in \cJ_1} v_\alpha E_\alpha\right)^*  = \sum_{\alpha\in  \cJ_1} \overline{v_\alpha} E_\alpha' =\pm \sum_{\alpha\in  \cJ_1} v_\alpha E_\alpha'  = \pm \sum_{\alpha\in \cJ_1} v_\alpha E_\alpha\ .$$
 commutes with the anti-unitary transformation $v_\alpha \mapsto \overline {v_{\alpha'}}$. Replacing $v$ by $-v$ as needed, we can arrange that $\sum_{\alpha\in \cJ_1} v_\alpha E_\alpha$ is self-adjoint.  
 Then the $\mu =1$ block of $C_\Phi$ can be diagonalized by a unitary  $U_{\gamma,\beta}$ such that for each $\gamma\in \cJ_1$,  $\sum_{\beta\in \cJ_1}U_{\gamma,\beta}E_\beta$ is self-adjoint. 
 
Now we piece together all the unitary blocks into an $N^2\times N^2$ unitary that we still call $U$, being careful to use the ``matched'' unitaries in the adjoint blocks, as described above.

 It follows that there is a unitary matrix $U$ and non-negative numbers $c_\gamma$ such that
 $$c_{\alpha,\beta} = \sum_{\gamma} c_\gamma U_{\alpha,\gamma}^* U_{\gamma,\beta} = 
 \sum_{\gamma} c_\gamma \overline{U_{\gamma,\alpha}} U_{\gamma,\beta}  $$
 and such that  $U_{\gamma,\beta} =0$ unless $e^{\omega_\gamma} = e^{\omega_\beta}$, and defining 
 ${\displaystyle V_\gamma := \sum_{\beta}  U_{\gamma,\beta}E_\beta}$ we have
 $$
 \Delta V_\gamma = e^{\omega_\gamma}V_\gamma
 $$
 for all $\gamma$ and 
 $$
 \{ V_\gamma\ :\ \gamma\in \cJ_N\} =  \{ V_\gamma^*\ :\ \gamma\in \cJ_N^*\} 
 $$
 is an orthonormal basis of $\mathfrak{H}$.   When working with this basis, we redefine the map $\alpha \mapsto \alpha'$  by $V_{\alpha'} = V_\alpha^*$. This can only differ from the definition used with the matrix unit bases for indices in $\cJ_1$, and only if $\sigma$ has at least two eigenvalues equal to one another, producing an ``accidental'' eigenvector of $\Delta$ with the eigenvalue $1$. 
 
 Then $\Phi(A) = \sum_{\alpha,\beta} c_{\alpha,\beta} E_\alpha^* A E_\beta$ becomes 
 ${\displaystyle \Phi(A) := \sum_{\gamma} c_\gamma V_\gamma^* A V_\gamma}$.
 Since  $\Delta_\sigma(V_\gamma) = e^{\omega_\gamma}V_\gamma$ and $V_\alpha^* = V_{\alpha'}$
 \begin{eqnarray*}
 \Phi^{*,m_s}(A) &=&  \sum_{\gamma} \sigma^{-s} V_\gamma  \sigma^s A\sigma^{1-s} V_{\gamma}^* \sigma^{-(1-s)} = \sum_{\gamma} c_\gamma e^{-s\omega_\gamma}e^{(s-1)\omega_\gamma} V_\gamma A V_{\gamma}^*\\
   &=& 
  \sum_{\gamma} c_{\gamma'} e^{\omega_\gamma} V_\gamma^* A V_{\gamma} \ .
 \end{eqnarray*}
 By the uniqueness of the coefficients, $ \Phi^{*,m_s} = \Phi$ if and only if for each $\gamma$
  \begin{equation}\label{baskrid7}
 c_\gamma = c_{\gamma'} e^{\omega_\gamma}\ .
\end{equation}
Defining $b_\gamma = e^{-\omega_\gamma/2}c_\gamma$, \eqref{baskrid7} is equivalent to $b_\gamma = b_{\gamma'}$.   Let $\mathcal{S}$ denote the spectrum of $\Delta$, which is of course determined by the spectrum of $\sigma$.
We can finally write $\Phi$ in the form 
 \begin{equation}\label{baskrid7b}
 \Phi(A) = \sum_{\gamma\in \cJ_1}  c_\gamma V_\gamma A V_\gamma  +  \sum_{\mu\in \mathcal{S} ,\mu > 1}\sum_{\gamma\in \cJ_\mu}  b_\gamma \left(  e^{\omega_\gamma/2}V_\gamma^*A V_\gamma +  e^{-\omega_\gamma/2}V_\gamma A V_\gamma^*  \right) \ .
\end{equation} 

It remains to show that the maps specified in \eqref{SmsCP} are extremal.  Suppose first that  $V = e^{i\theta}V^*$ for some real $\theta$. Then $\omega = 0$, and \eqref{SmsCP} reduces to $\Phi(A) = V^*AV$ which is extremal 
 in the larger cone of all CP maps, and thus extremal among those that are self-adjoint on $\H_{\delta_s}$. Replacing $V$ by $e^{-i\theta/2}V$ we see that we may assume without loss of generality in this case that $V = V^*$. 
 
 Next, suppose that $V \neq e^{i\theta} V^*$ for any real $\theta$, but $\omega = 0$. Write $V = X+iY$,  $X,Y$ self-adjoint.  Then $\Delta X = X$ and $\Delta Y = Y$ and $X \neq Y$. We compute
 \begin{eqnarray*}
 V^*AV &=& (X-iY)A(X+iY) = XAX + YAY +i(XAY - YAX)\\
   VAV^* &=& (X+iY)A(X-iY) = XAX + YAY -i(XAY - YAX)  \ .
 \end{eqnarray*}
Thus   $\Phi(A) = XAX + YAY$.
Then $A \mapsto XAX$ and $A \mapsto YAY$ are distinct CP maps that are self-adjoint on $\H_{\delta_s}$.  Hence $\Phi$ is not extreme.   In summary, when $\Delta V = V$, the necessary and sufficient condition for $\Phi$ to be extremal is that $V^* = e^{i\theta }V$ for some real $\theta$, in which case we may replace $V$ by an equivalent self-adjoint operator. 

Now suppose that $\Delta V = e^\omega V$ with $\omega \neq 0$. Then $V$ and $V^*$ are orthogonal.  For convenience in what follows define $W_1 := e^{-(1-2s)\omega/4}V$ and $W_2 := e^{(1-2s)\omega/4}V^*$. Then,
ignoring the trivial case $V =0$, $$\Phi(A) = W_1^*AW_1 + W_2^*AW_2 $$
is a minimal Kraus representation of $\Phi$. Consequently, if $\Psi$  is any CP map such that  $\Phi - \Psi$ is CP, then $\Psi$ has the form
$$\Psi(A) = \sum_{i,j} T_{i,j} W_i^*A W_j$$
where $T$ is a $2\times 2$ matrix such that $0 \leq T \leq \one$.   Then,
$$
\Psi^{*,\delta_s}(A) = \sum_{i,j} \overline{T_{i,j}} (\Delta^{-s} W_i)A (\Delta^{1-s}W_j^*) =  \sum_{i,j} \overline{T_{i,j}} e^{-s(3-2i)\omega}e^{(1-s)(3-2j)\omega} W_iA W_j^*\ .
$$
Since  $\overline{T_{i,j}} = T_{j,i}$, simple calculations yield
$$
\Psi^{*,\delta_s}(A)   =  T_{1,1}  W_2^*A W_2  +  T_{2,2}  W_1^*A W_1  +  T_{2,1} e^{-\omega}W_1AW_2^* + T_{1,2} e^{\omega}W_2AW_1^* \ . $$
Then with $\Psi^{*,\delta_s} = \Psi$, by the uniqueness of the coefficients, $T_{1,2} = T_{2,1} =0$ and $T_{1,1} = T_{2,2}$. This shows that $T$ is a multiple of the identity, and hence that $\Psi$ is a multiple of $\Phi$. Hence $\Phi$ is extreme. 
\end{proof}

The following notation will be useful going forward. For $\mu \in \mathcal{S}$, let $d_\mu$ be the dimension of the corresponding eigenspace of $\Delta$.   As a consequence of Theorem~\ref{Main3}, every CP map $\Phi$ that is self-adjoint on $\H_{\delta_s}$ has a Kraus representation  of the following form:

Let $\mathcal{S}'= \{\mu_1,\dots,\mu_d\}$ be a subset of $\mathcal{S}$ such that
$1 \leq \mu_1 \leq \mu_2 \cdots  \leq \mu_d$. For each $1\leq j \leq d$, let $\{V^{(j)}_1,\dots, V^{(j)}_{M_j}\}$ be a linearly independent set of eigenvectors of $\Delta$ with eigenvalue $\mu_j$. Evidently, $1 \leq M_{j} \leq d_j$. Suppose further that if $\mu_1 =1$, then $\{V^{(1)}_1,\dots, V^{(1)}_{M_1}\}$ is self-adjoint.   Then, if $d_1 >1$, the Kraus representation is
\begin{equation}\label{canform1}
\Phi(A) =  \sum_{j=1}^d\left( \sum_{k = 1}^{M_j} ( \mu^{1/2}_k V^{(j)*}_k A V_k^{(j)}  +   \mu_j^{-1/2}V^{(j)}_k A V^{(j)^*}_k)\right)\ ,
\end{equation}
while if $d_1 =1$, it is 
 \begin{equation}\label{canform2}
\Phi(A) = \sum_{k = 1}^{M_1} V^{(1)}_k A V^{(1)}_k  + \sum_{j=2}^d\left( \sum_{k = 1}^{M_j} ( \mu_j^{1/2}V^{(j)*}_k A V_k^{(j)}  +   \mu_j^{-1/2}V^{(j)}_k A V_k^{(j)^*})\right)\ .
\end{equation}
In either case, the minimality of the Kraus representation is a consequence of the linear independence required above. We call such a minimal Kraus representation of a CP map that is self-adjoint on $\H_{\delta_s}$, $s\neq \frac12$ a {\em canonical minimal Kraus representation}.

\begin{thm}\label{Main4}  Let $\Phi$ be a CP map that is self-adjoint on $\H_{\delta_s}$, $s\neq \frac12$. Then if $\Psi$ is another CP map  that is self-adjoint on $\H_{\delta_s}$,   $\Phi- \Psi$ is CP if and only if:

\smallskip
\noindent{\it (1)} If $\Phi$ has a canonical minimal Kraus representation of the form \eqref{canform1}, there are matrices $T^{(1)},\dots, T^{(d)} $ where $T^{(j)}$ is an $M_j\times M_j$ matrix satisfying $0 \leq T^{(j)} \leq \one$ such that 
\begin{equation}\label{Main41}
\Psi(A) =   \sum_{j=1}^d\left( \sum_{k,\ell = 1}^{M_j}  T^{(j)}_{k,\ell}  \left( \mu_j^{1/2} V^{(j)*}_k A V_\ell^{(j)}  +   \mu_j^{-1/2}  V^{(j)}_\ell A V^{(j)^*}_k  \right)\right)\ ,
\end{equation}

\smallskip
\noindent{\it (2)} If $\Phi$ has a canonical minimal Kraus representation of the form \eqref{canform2}, there are matrices $T^{(1)},\dots, T^{(d)} $ where $T^{(j)}$ is an $M_j\times M_j$ matrix satisfying $0 \leq T^{(j)} \leq \one$, and with $T^{(1)}$ real, such that 
\begin{equation}\label{Main42}
\Psi(A) =   \sum_{k,\ell = 1}^{M_1} T^{(1)}_{k,\ell}V^{(1)}_k A V^{(1)}_\ell  +  \sum_{j=2}^d\left( \sum_{k,\ell = 1}^{M_j}  T^{(j)}_{k,\ell}  \left( \mu_j^{1/2} V^{(j)*}_k A V_\ell^{(j)}  +   \mu_j^{-1/2}  V^{(j)}_\ell A V^{(j)^*}_k  \right)\right)\ .
\end{equation}
\end{thm}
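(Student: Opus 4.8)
The plan is to mirror the proof of Theorem~\ref{RNlem}, using the matricial version of Arveson's Radon--Nikodym theorem (recalled in the appendix) together with the adjoint formula of Lemma~\ref{adlem}. Fix a canonical minimal Kraus representation of $\Phi$, which we write as $\Phi(A) = \sum_{i=1}^M W_i^* A W_i$; after relabeling, each $W_i$ is a scalar multiple of one of the operators $V_k^{(j)}$ or $(V_k^{(j)})^*$ occurring in \eqref{canform1} or \eqref{canform2}. In particular each $W_i$ is an eigenvector of the modular operator, $\Delta W_i = e^{\omega_i} W_i$, and the canonical form supplies an involution $i \mapsto i'$ of the index set with $\omega_{i'} = -\omega_i$ and $W_i^* = e^{\omega_i/2} W_{i'}$ (for a self-adjoint Kraus operator $\omega_i = 0$ and $i' = i$; the presence or absence of such operators is precisely what distinguishes \eqref{canform2} from \eqref{canform1}).

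The first step is a direct computation. If $\Psi(A) = \sum_{i,j} T_{ij} W_i^* A W_j$, then $\Psi^\dagger(A) = \sum_{i,j} \overline{T_{ij}}\, W_i A W_j^*$, and using $\sigma^{-s} W_i \sigma^s = e^{-s\omega_i} W_i$, $\sigma^{1-s} W_j^* \sigma^{s-1} = e^{-(1-s)\omega_j} W_j^*$ and $W_i A W_j^* = e^{(\omega_i + \omega_j)/2} W_{i'}^* A W_{j'}$, Lemma~\ref{adlem} gives, after relabeling,
\[
\Psi^{*,\delta_s}(A) = \sum_{i,j} e^{(s - \frac12)(\omega_i - \omega_j)}\, \overline{T_{i'j'}}\; W_i^* A W_j\,.
\]
Since $\{W_i\}$ is linearly independent, the coefficients of a representation $\sum_{i,j} c_{ij} W_i^* A W_j$ are unique, so $\Psi$ is self-adjoint on $\H_{\delta_s}$ if and only if
\[
T_{ij} = e^{(s - \frac12)(\omega_i - \omega_j)}\, \overline{T_{i'j'}} \qquad \text{for all } i,j\,,
\]
a condition we call $(\star)$. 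By Arveson's theorem, $\Phi - \Psi$ is CP if and only if there is a unique $T$, necessarily Hermitian, with $0 \le T \le \one$ and $\Psi(A) = \sum_{i,j} T_{ij} W_i^* A W_j$. Thus the theorem reduces to showing that the Hermitian $T$ with $0 \le T \le \one$ satisfying $(\star)$ are exactly those assembled from blocks $T^{(j)}$ as in \eqref{Main41}/\eqref{Main42}.

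The key point is that $(\star)$ forces $T$ to be block diagonal with respect to the eigenvalue blocks of $\Delta$. Indeed, if $\omega_i \ne \omega_j$, then applying $(\star)$ to the pair $(i,j)$ and to the pair $(j,i)$ and using $T = T^*$ to rewrite $T_{ji} = \overline{T_{ij}}$ and $T_{j'i'} = \overline{T_{i'j'}}$ yields $\bigl(e^{(s-\frac12)(\omega_i - \omega_j)} - e^{(s-\frac12)(\omega_j - \omega_i)}\bigr)\overline{T_{i'j'}} = 0$; since $s \ne \frac12$ and $\omega_i \ne \omega_j$ the prefactor is nonzero, so $T_{i'j'} = 0$, and since $(i,j)\mapsto(i',j')$ is a bijection, $T_{ij} = 0$ whenever $\omega_i \ne \omega_j$. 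Because the eigenvalues $\mu_1 < \mu_2 < \cdots$ of $\Delta$ that occur are distinct, the surviving blocks are: for each $\mu_j > 1$, the block of indices with $W_i = \mu_j^{1/4} V_k^{(j)}$ (eigenvalue $\mu_j$) and the block with $W_i = \mu_j^{-1/4} (V_k^{(j)})^*$ (eigenvalue $\mu_j^{-1}$), and, when a self-adjoint block is present, the $\mu = 1$ block. On a $\mu_j$-block $(\star)$ reads $T_{ij} = \overline{T_{i'j'}}$ with $i',j'$ in the $\mu_j^{-1}$-block, so the latter block is the entrywise conjugate of the former; calling the former $T^{(j)}$ we have $0 \le T^{(j)} \le \one$ as a principal submatrix of $T$. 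On the $\mu = 1$ block $i' = i$, so $(\star)$ forces $T^{(1)} = \overline{T^{(1)}}$, i.e.\ $T^{(1)}$ real. Substituting this structure into $\Psi(A) = \sum_{i,j} T_{ij} W_i^* A W_j$, expressing the $W_i$ in terms of the $V_k^{(j)}$ and using Hermiticity of $T^{(j)}$ to symmetrize the eigenvalue-$\mu_j^{-1}$ contribution, reproduces \eqref{Main41} (case \eqref{canform1}) or \eqref{Main42} (case \eqref{canform2}). For the converse, given $T^{(1)},\dots,T^{(d)}$ satisfying the stated positivity constraints, one assembles the block-diagonal $T$ just described ($T^{(j)}$ on the $\mu_j$-block, $\overline{T^{(j)}}$ on the $\mu_j^{-1}$-block, the real $T^{(1)}$ on the $\mu=1$ block); it satisfies $0 \le T \le \one$ since each block does and conjugation preserves this, and it satisfies $(\star)$ by the block-by-block check above, so Arveson's theorem gives $\Phi - \Psi$ CP while $(\star)$ gives $\Psi$ self-adjoint on $\H_{\delta_s}$.

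The only genuinely non-routine step is the first one: one must verify that the canonical minimal Kraus operators are simultaneously $\Delta$-eigenvectors and are interchanged in adjoint-conjugate pairs by a single involution --- this is exactly the content of the canonical form \eqref{canform1}/\eqref{canform2} --- and then carefully track the exponents $e^{\pm s\omega_i}$, $e^{\pm(1-s)\omega_j}$ and the pairing factor $e^{(\omega_i + \omega_j)/2}$ so that they collapse into the single factor $e^{(s - \frac12)(\omega_i - \omega_j)}$ in $(\star)$. Once $(\star)$ is in hand, everything else --- the vanishing of the off-diagonal blocks, the identification of $T^{(j)}$, and the translation back to the $V_k^{(j)}$ --- is bookkeeping of the same kind already carried out in the proofs of Theorems~\ref{Main2} and \ref{Main3}; the one point requiring a little care is keeping the case with a self-adjoint block (which is its own involution partner and yields the reality of $T^{(1)}$) separate from the case without one.
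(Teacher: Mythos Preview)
Your argument is correct and is, in fact, cleaner and more self-contained than the paper's own proof. Both reach the same destination, but by different routes.

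The paper orthonormalizes each set $\{V^{(j)}_1,\dots,V^{(j)}_{M_j}\}$ via Gram--Schmidt, obtaining lower-triangular change-of-basis matrices $L^{(j)}$ and orthonormal eigenvectors $\{W^{(j)}_k\}$ of $\Delta$. It then reads off $C_\Phi$ in the extended orthonormal basis, invokes $C_\Phi - C_\Psi \ge 0$ directly to force $C_\Psi$ to live in the same blocks, and finally undoes the Gram--Schmidt via $T^{(j)} = ((L^{(j)})^{-1})^* R^{(j)} (L^{(j)})^{-1}$. The paired structure (that the $\mu_j^{-1}$-block is the conjugate of the $\mu_j$-block) and the reality of $T^{(1)}$ in case~(2) are stated rather than derived, relying implicitly on the self-adjointness constraint \eqref{baskrid7} established earlier in the proof of Theorem~\ref{Main3}.

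You instead apply Arveson's Radon--Nikodym theorem straight to the given minimal Kraus representation (no orthonormalization needed), and then impose self-adjointness of $\Psi$ by a direct computation with Lemma~\ref{adlem}, exploiting that every canonical Kraus operator is a $\Delta$-eigenvector paired with its adjoint by the involution $i\mapsto i'$. The resulting single scalar identity $(\star)$ simultaneously forces the block-diagonal structure of $T$, the conjugate pairing of the $\mu_j$ and $\mu_j^{-1}$ blocks, and the reality of the $\mu=1$ block. This is exactly the strategy of Theorem~\ref{RNlem}, transported from the KMS to the GNS setting; it makes the role of $s\neq\tfrac12$ completely transparent (it is what makes the off-block prefactor $e^{(s-\frac12)(\omega_i-\omega_j)}-e^{(s-\frac12)(\omega_j-\omega_i)}$ nonvanishing). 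What your approach buys is that the self-adjointness constraint is handled once, uniformly, rather than being invoked piecemeal from the earlier structural theorem; what the paper's approach buys is that no appeal to the appendix is necessary, since $C_\Phi - C_\Psi \ge 0$ is the characteristic-matrix form of Arveson already.
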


\begin{proof}  Let us consider the case {\it (1)}. For each $j =1,\dots,d$, apply the Gram-Schmidt algorithm to produce an invertible lower triangular $M_j\times M_j$ matrix $L^{(j)}$ and an orthonormal set $\{W^{(1)}_1,\dots, W^{(j)}_{M_j}\}$ such that
$$
V^{(j)}_k = \sum_{\ell =1}^{M_j} L^{(j)}_{k,\ell} W^{(j)}_\ell \qquad{\rm and\ hence}\qquad (V^{(j)})^*_k  = \sum_{\ell =1}^{M_j} \overline{L^{(j)}_{k,\ell} }(W^{(j)})^*_\ell\ .$$
Evidently $\{W^{(1)}_1,\dots, W^{(j)}_{M_j}\}$  lies in the eigenspace of $\Delta$ corresponding to $\mu_j$.  Then 
$$
\Phi(A) =   \sum_{j=1}^d\left( \sum_{k,\ell = 1}^{M_j}  \left( \mu_j^{1/2} [(L^{(j)})^*L^{(j)}]_{k,\ell} W^{(j)*}_k A W_\ell^{(j)}  +   \mu_j^{-1/2} \overline{ [(L^{(j)})^*L^{(j)}]_{k,\ell}} W^{(j)}_k A W^{(j)^*}_\ell  \right)\right)\ ,
$$
One can read off from this expression the characteristic matrix $C_\Phi$  with respect to the orthonormal basis obtained by extending, if necessary 
$$\{ W^{(j)}_k\ :\ 1 \leq j \leq d\ , 1 \leq k \leq M_j\ \}\ .$$
Let $C_\Psi$ denote the characteristic matrix of $\Psi$ with respect to this same basis. Then $\Phi - \Psi$ is CP if and only if $C_\Phi - C_\Psi \geq 0$. Thus, $\Phi - \Psi$, which is certainly self-adjoint on $\H_{\delta_s}$, is CP if and only if there are matrices $\{R^{(1)},\dots, R^{(d)}\}$
where for each $j$, $R^{(j)}$ is an $M_j\times M_j$ matrix with $0 \leq R^{(j)} \leq  (L^{(j)})^*L^{(j)}$ such that 
$$
\Psi(A) =   \sum_{j=1}^d\left( \sum_{k,\ell = 1}^{M_j}  \left( \mu_j^{1/2} R^{(j)}_{k,\ell} W^{(j)*}_k A W_\ell^{(j)}  +   \mu_j^{-1/2} \overline{ R^{(j)}_{k,\ell}} W^{(j)}_k A W^{(j)^*}_\ell  \right)\right)\ .
$$
Now defining $T^{(j)} = ((L^{(j)})^{-1})^* R^{(j)} (L^{(j)})^{-1}$, we have the result in case {\it (1)} on account of the self-adjointness of each $T^{(j)}$. 

The proof in case {\it (2)} is essentially the same, except for one point: Since $\{V^{(1)}_1,\dots, V^{(1)}_{M_1}\}$ is self-adjoint, for all $i,j$, $\tr[(V^{(1)}_i)^*V^{(1)}_j] = \tr[V^{(1)}_iV^{(1)}_j] \in \R$. 

Therefore, applying the Gram-Schmidt algorithm to 
$\{V^{(1)}_1,\dots, V^{(1)}_{M_1}\}$ yields a self-adjoint orthonormal basis $\{W^{(1)}_1,\dots, W^{(1)}_{M_1}\}$ and a {\em real} lower triangular matrix $L^{(1)}$ such that 
$V^{(1)}_k = \sum_{\ell =1}^{M_j} L^{(1)}_{k,\ell} W^{(1)}_\ell$.    Also, since $\{W^{(1)}_1,\dots, W^{(1)}_{M_1}\}$ is a set of  self-adjoint eigenvectors of $\Delta$ in the eigenspace with eigenvalue $1$,  
$\sum_{j=1}^{M_1} R^{(1)}_{i,j} W^{(1)}_i A W^{(1)}_j$ is CP and self-adjoint on $\H_{\delta_s}$ if and only if  $ R^{(1)} \geq 0$ and $R^{(1)}$ is real. Hence the matrix $T^{(1)}$ is {\em real} in addition to satisfying $0 \leq T^{(1)} \leq \one$.

\end{proof}

\begin{thm}\label{Main6}  Let $\Phi\in CP(\one)_{\delta_s}$, the set of unital CP maps that are self-adjoint on $\H_{\delta_s}$.   Let $\Phi$ have a canonical minimal Kraus representation specified in terms of 
$$
\{\ \{V^{(1)}_1,\dots, V^{(1)}_{M_1}\}, \dots, \{V^{(d)}_1,\dots, V^{(d)}_{M_d}\} \ \}
$$
as in \eqref{canform1} or \eqref{canform2}.
Define 
$$X^{(j)}_{k,\ell} = \mu_j^{1/2} V^{(j)*}_kV_\ell^{(j)}  +    \mu_j^{-1/2} V^{(j)}_\ell V^{(j)^*}_k \ .$$
The necessary and sufficient condition for $\Phi$ to be extremal in $CP(\one)_{\delta_s}$ is that for each $1\leq j \leq d$,
\begin{equation}\label{lininsets}
\{ \ X^{(j)}_{k,\ell}\ \ :\ 1\leq k,\ell\leq M_j \ , \ 1 \leq j \leq d\ \}
\end{equation}
is linearly independent.
\end{thm}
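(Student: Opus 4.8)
The plan is to run the standard Choi-type argument for extremality in $CP(\one)$, with Theorem~\ref{Main4} playing the role of Arveson's Radon--Nikodym theorem and the operators $X^{(j)}_{k,\ell}$ encoding the unitality constraint, exactly paralleling the treatment of the KMS case immediately above. Throughout one uses the characterization recalled in the introduction: $\Phi\in CP(\one)_{\delta_s}$ is extreme if and only if, whenever $\Psi\in CP(\one)_{\delta_s}$ satisfies $\Phi\geq t\Psi$ for some $0<t<1$, one has $\Psi=\Phi$.

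\emph{Sufficiency.} Suppose the family in \eqref{lininsets} is linearly independent and let $\Psi\in CP(\one)_{\delta_s}$ with $\Phi\geq t\Psi$, $0<t<1$. Since $t\Psi$ is CP and self-adjoint on $\H_{\delta_s}$, Theorem~\ref{Main4} applied to the pair $\Phi$, $t\Psi$ produces matrices $S^{(1)},\dots,S^{(d)}$ with $0\leq S^{(j)}\leq\one$ (and $S^{(1)}$ real in the case \eqref{canform2}) such that $t\Psi$ is given by \eqref{Main41} or \eqref{Main42} with the $S^{(j)}$ in place of the $T^{(j)}$. Evaluating $\Phi$ and $t\Psi$ at $A=\one$ and using $\Phi(\one)=\one=\Psi(\one)$ together with the fact that $\Phi$ corresponds to $T^{(j)}=\one$, subtraction yields an identity
\begin{equation*}
\sum_{j=1}^d\ \sum_{k,\ell=1}^{M_j}\bigl(S^{(j)}_{k,\ell}-t\,\delta_{k,\ell}\bigr)X^{(j)}_{k,\ell}=0
\end{equation*}
(with an extra, harmless, factor $\tfrac12$ on the $j=1$ block in the case \eqref{canform2}, since there the self-adjoint eigenvalue-$1$ Kraus operators enter $\Phi$ through $V^{(1)}_kAV^{(1)}_k$, whose value at $\one$ is $\tfrac12X^{(1)}_{k,k}$). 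The linear independence hypothesis forces every coefficient to vanish, so $S^{(j)}=t\one$ for all $j$, hence $t\Psi=t\Phi$ and $\Psi=\Phi$; thus $\Phi$ is extreme.

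\emph{Necessity.} If the family in \eqref{lininsets} is linearly dependent, I would first arrange a dependence relation $\sum_{j}\sum_{k,\ell}B^{(j)}_{k,\ell}X^{(j)}_{k,\ell}=0$ with the $B^{(j)}$ Hermitian and not all zero, and with $B^{(1)}$ real in the case \eqref{canform2}: taking adjoints and using $(X^{(j)}_{k,\ell})^{*}=X^{(j)}_{\ell,k}$ turns a relation into another, so its Hermitian and skew-Hermitian parts are both relations and at least one is nontrivial; and in the case \eqref{canform2} the operators $X^{(1)}_{k,\ell}$ are self-adjoint and symmetric under $k\leftrightarrow\ell$, so the imaginary part of $B^{(1)}$ contributes nothing and may be dropped. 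Then I choose $t>0$ small enough that $0\leq\one+tB^{(j)}\leq 2\one$ for every $j$, set $T^{(j)}:=\one+tB^{(j)}$ (after rescaling $B^{(1)}$ by the factor noted above), and let $\Psi$ be the operator defined by \eqref{Main41}/\eqref{Main42} with these $T^{(j)}$. By Theorem~\ref{Main4} (comparing with $2\Phi$) $\Psi$ is CP with $\Phi\geq\tfrac12\Psi$; it is self-adjoint on $\H_{\delta_s}$ since its coefficient matrices are Hermitian and real on the first block, by the structure established in the proof of Theorem~\ref{Main3}; and evaluating at $\one$ and invoking the dependence relation gives $\Psi(\one)=\Phi(\one)=\one$, so $\Psi\in CP(\one)_{\delta_s}$. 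As the $B^{(j)}$ are not all zero, $\Psi\neq\Phi$, and hence $\Phi$ is not extreme.

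I expect the only delicate point to be the bookkeeping in the two identities $\Phi(\one)=\cdots$ and $\Psi(\one)=\cdots$ in terms of the $X^{(j)}_{k,\ell}$: one must keep straight the two normalizations of \eqref{canform1} and \eqref{canform2} (the eigenvalue-$1$ self-adjoint block $\sum_kV^{(1)}_kAV^{(1)}_k$ contributes $(V^{(1)}_k)^2=\tfrac12X^{(1)}_{k,k}$, not $X^{(1)}_{k,k}$), verify the identity $\sum_{k,\ell}T^{(1)}_{k,\ell}V^{(1)}_kV^{(1)}_\ell=\tfrac12\sum_{k,\ell}T^{(1)}_{k,\ell}X^{(1)}_{k,\ell}$ for real symmetric $T^{(1)}$, and confirm that the reality constraint on $T^{(1)}$ is precisely what keeps the perturbed map self-adjoint on $\H_{\delta_s}$. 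The conceptual content is entirely carried by Theorems~\ref{Main3} and~\ref{Main4}.
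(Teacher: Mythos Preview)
Your proposal is correct and follows essentially the same approach as the paper's own proof: both directions invoke Theorem~\ref{Main4} to parametrize the maps $\Psi\le\Phi$ by block matrices $T^{(j)}$, evaluate at $A=\one$ to express the unitality constraint in terms of the $X^{(j)}_{k,\ell}$, and then use linear independence (or its failure) to conclude. The paper treats only case~\eqref{canform1} explicitly and dismisses case~\eqref{canform2} as ``quite similar''; you are in fact more careful here, correctly tracking the factor~$\tfrac12$ coming from $V^{(1)}_kV^{(1)}_\ell=\tfrac12X^{(1)}_{k,\ell}$ when $\mu_1=1$ and the $V^{(1)}_k$ are self-adjoint, and correctly noting that the reality constraint on $T^{(1)}$ is exactly what the symmetry $X^{(1)}_{k,\ell}=X^{(1)}_{\ell,k}$ allows you to impose on $B^{(1)}$ without losing nontriviality of the dependence relation. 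Your normalization $T^{(j)}=\one+tB^{(j)}$ (yielding $\Phi\ge\tfrac12\Psi$) differs cosmetically from the paper's $T^{(j)}=\tfrac12(\one+B^{(j)})$ (yielding $\Phi\ge\Psi$), but the two are equivalent.
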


\begin{proof}  Suppose first that $\Phi$ has a canonical minimal
Kraus representation of the type \eqref{canform1}.
Let $\Psi$ be a unital CP map that is self-adjoint on $\H_{\delta_s}$, and suppose that for some $0< t < 1$, $\Phi - t\Psi$ is CP. Then by Theorem~\ref{Main4}, there are matrices $T^{(1)},\dots, T^{(d)} $ where $T^{(j)}$ is an $M_j\times M_j$ matrix satisfying $0 \leq T^{(j)} \leq \one$ such that 
$t\Psi(A)$ is given by the right side of \eqref{Main41}.  Since $\Psi$ and $\Phi$ are both unital, $t\Psi(\one) = t\Phi(1)$, and then 
$$
\sum_{j=1}^d\left( \sum_{k,\ell = 1}^{M_j}  (T^{(j)}_{k,\ell}- t\delta_{k,\ell}) \left(  \mu_j^{1/2}V^{(j)*}_kV_\ell^{(j)}  +   \mu_j^{-1/2}  V^{(j)}_\ell V^{(j)^*}_k  \right)\right) = 0\ .
$$
Then, if the set specified in \eqref{lininsets} is linearly independent, for each $j$, $T^{(j)} = t\one$, and hence $t\Psi = t \Phi$ so that $\Psi = \Phi$.  Hence $\Phi$ is extremal. 

For the converse, suppose that  the set specified in \eqref{lininsets} is linearly dependent. Then there are matrices $\{B^{(1)},\dots,B^{(d)}\}$, not all zero,  such that 
\begin{equation}\label{baskrid31}
\sum_{j=1}^d\left( \sum_{k,\ell = 1}^{M_j}  B^{(j)}_{k,\ell} \left(  \mu_j^{1/2}V^{(j)*}_kV_\ell^{(j)}  +   \mu_j^{-1/2}  V^{(j)}_\ell V^{(j)^*}_k  \right)\right) = 0\ .
\end{equation}
The adjoint of the $N\times N$ matrix on the left in \eqref{baskrid31}  equals the matrix obtained by replacing each $B^{(j)}$ by its adjoint. Thus, we may assume without loss of generality that each $B^{(j)}$ is self-adjoint.

Now replacing each $B^{(j)}$ with $tB^{(j)}$ for some common $t>0$, we may assume without loss of generality that $\|B^{(j)}\| \leq 1$ for each $j$. Now define $T^{(j)} := \tfrac12(\one + B^{(j)})$. Then  $0 \leq T^{(j)} \leq \one$ for all $j$. 
Now, using these $T^{(j)}$,  define $\Psi$ by \eqref{Main41}.  Then by Theorem~\ref{Main4}, $\Psi$ is CP, self-adjoint on $\H_{\delta_s}$ and $\Phi - \Psi$ is CP.   By \eqref{baskrid31} and the fact that  $\Phi$ is unital, $\Psi$ is unital. But since $B^{(j)} \neq 0$ for at least one $j$, $\Psi$ is not a multiple of $\Phi$. Hence $\Phi$ is not extremal. 

The case in which $\Phi$ has a canonical minimal
Kraus representation of the type \eqref{canform2} is quite similar. 
 \end{proof}

\section{evenly self-adjoint maps}

We say a map $\Phi$ is {\em evenly self-adjoint} in case it is self-adjoint on $\H_m$ for all even $m$.  Let $CP_{even}$, $CP_{even}(\one)$ and $QMS_{even}$ be the sets of evenly symmetric CP maps, unital CP maps  and QMS generators respectively.
We have seen that, for instance, $CP_{GNS} \subset CP_{even} \subset CP_{KMS}$, since when $\Phi\in CP_{GNS}$, $\Phi$ is self-adjoint on {\em every} $\H_m$ whether $m$ is even or not, and since the measure $m$ defining the KMS inner product is even. For the same reason we have $CP_{even} \subset CP_{BKM}$. 

Using the next lemma, we will describe a natural way to construct elements of $CP_{even}$, $CP_{even}(\one)$ and $QMS_{even}$ that do not belong to $CP_{GNS}$, $CP_{GNS }(\one)$ and $QMS_{GNS}$ respectively:

\begin{lm}\label{colm} Let $m\in\mathcal{P}[0,1]$ and suppose $\Phi$ is such that  $[\cM_m,\Phi] = 0$. Then $\tfrac12(\Phi+ \Phi^\dagger)$ and  $ \tfrac{1}{2i}(\Phi - \Phi^\dagger)$ are self-adjoint on $\H_m$.
\end{lm}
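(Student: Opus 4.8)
The plan is to reduce the statement to two elementary facts about the operator $\cM_m$ acting on $\fH$. The first is that $\cM_m$ is self-adjoint with respect to $\langle\cdot,\cdot\rangle_{\fH}$, i.e. $\cM_m^\dagger=\cM_m$; this was already observed above, where $\cM_m$ was seen to be diagonalized (with positive eigenvalues) by a matrix unit basis associated to $\sigma$, and it holds for \emph{every} $m\in\mathcal{P}[0,1]$, whether or not $m$ is even. The second is just the definition of the inner product, $\langle B,\Psi(A)\rangle_m=\tr[B^*\cM_m(\Psi(A))]$, together with the identity $\tr[B^*\cM_m(C)]=\tr[(\cM_m(B))^*C]$, which is nothing but $\cM_m^\dagger=\cM_m$ written out.

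First I would deduce from $[\cM_m,\Phi]=0$ that also $[\cM_m,\Phi^\dagger]=0$: taking the $\dagger$-adjoint of $\cM_m\Phi=\Phi\cM_m$ and using $\cM_m^\dagger=\cM_m$ gives $\Phi^\dagger\cM_m=\cM_m\Phi^\dagger$. Hence both
\[
\Psi:=\tfrac12(\Phi+\Phi^\dagger)\qquad\text{and}\qquad \Theta:=\tfrac{1}{2i}(\Phi-\Phi^\dagger)
\]
commute with $\cM_m$. Moreover, since $(\Phi^\dagger)^\dagger=\Phi$ and $\dagger$ is conjugate-linear in the scalar, one checks at once that $\Psi^\dagger=\Psi$ and $\Theta^\dagger=\Theta$, i.e. both $\Psi$ and $\Theta$ are self-adjoint with respect to $\langle\cdot,\cdot\rangle_{\fH}$.

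Finally I would run the one-line computation, for arbitrary $A,B\in\cM_N(\C)$ (the computation for $\Theta$ being identical to that for $\Psi$):
\begin{align*}
\langle B,\Psi(A)\rangle_m &= \tr[B^*\cM_m(\Psi(A))] = \tr[B^*\Psi(\cM_m(A))]\\
&= \tr[(\Psi(B))^*\cM_m(A)] = \langle \Psi(B),A\rangle_m,
\end{align*}
where the second equality uses $[\cM_m,\Psi]=0$ and the third uses $\Psi^\dagger=\Psi$ applied to the argument $\cM_m(A)$. Since $A,B$ are arbitrary this says $\Psi^{*,m}=\Psi$, i.e. $\Psi$ is self-adjoint on $\H_m$, and the same chain with $\Theta$ in place of $\Psi$ finishes the proof. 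There is essentially no obstacle here; the only point that needs attention is that one must first promote $[\cM_m,\Phi]=0$ to $[\cM_m,\Phi^\dagger]=0$ before passing to the Hermitian and anti-Hermitian parts, and this is exactly the place where self-adjointness of $\cM_m$ on $\fH$ enters. (Alternatively, one may invoke the identity $\Psi^{*,m}=\cM_m^{-1}\circ\Psi^\dagger\circ\cM_m$, which is the content of the computation in the proof of Lemma~\ref{adlem} and needs no Hermiticity of $\Psi$ in the present sense: since $\Psi^\dagger=\Psi$ and $\Psi$ commutes with $\cM_m$, hence with $\cM_m^{-1}$, the right-hand side collapses to $\Psi$, and likewise for $\Theta$.)
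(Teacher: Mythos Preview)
Your proposal is correct and follows essentially the same approach as the paper's proof. The paper argues via the criterion (from Lemma~\ref{adlem}) that $\Psi$ is self-adjoint on $\H_m$ iff $\cM_m\circ\Psi=\Psi^\dagger\circ\cM_m$, first deducing $[\cM_m,\Phi^\dagger]=0$ from $\cM_m^\dagger=\cM_m$ exactly as you do, and then observing that $\tfrac12(\Phi+\Phi^\dagger)$ (and similarly $\tfrac{1}{2i}(\Phi-\Phi^\dagger)$) commutes with $\cM_m$ and is its own $\dagger$-adjoint; your direct inner-product computation is just this same argument unwound.
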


\begin{proof}  By Lemma~\ref{adlem}, $\Phi$ is self-adjoint on $\H_m$ if and only if $\cM_m\circ \Phi = \Phi^\dagger \circ \cM_m$.   Since $\cM_m^\dagger = \cM_m$,  $[\cM_m,\Phi^\dagger] = 0$.  Then
$$\cM_m\circ \tfrac12(\Phi^\dagger+\Phi) =      \tfrac12(\Phi^\dagger+\Phi) \circ \cM_m  =   \tfrac12(\Phi^\dagger+\Phi)^\dagger  \circ \cM_m \ .$$
The proof for $\tfrac{1}{2i} (\Phi - \Phi^\dagger)$ is the same. 
\end{proof}

For $1\leq i,j \leq N$, let $E_{i,j} := \sqrt{N}|u_i\rangle \langle u_j|$ where $\{u_1,\dots,u_N\}$ is an orthonormal basis of $\C^N$ consisting of eigenvectors of $\sigma$; $\sigma u_j = \lambda_j u_j$.  Recall that the $E_{i,j}$ are an orthonormal basis of $\mathfrak{H}$ consisting of eigenvectors of $\cM_m$ with $\cM_mE_{i,j} = (\lambda_i,\lambda_j)_m E_{i,j}$.  If $m$ is not even, it can easily be that each eigenspace of $\cM_m$ is one dimensional, and then $\Phi$ commutes with $\cM_m$ if and only if it is a function of $\cM_m$ itself. 

However, when $m$ is even,  $(\lambda_i,\lambda_j)_m = (\lambda_j,\lambda_i)_m$ for all $i,j$, and hence if $i < j$,  $E_{i,j}$ and $E_{j,i}$ belong to the same eigenspace. For $i < j$, consider the map $\Phi$ defined by  $\Phi(A) = E_{i,j} A E_{i,j}$.
Note that $\Phi^\dagger(A) = E_{j,i}A E_{j,i}$.
A simple calculation shows that $\Phi\circ \cM_m = (\lambda_i,\lambda_j)_m \Phi $ and $\cM_m\circ \Phi = (\lambda_j,\lambda_i)_m \Phi$.    Hence, whenever $m$ is even, $[\Phi, \cM_m] = 0$.   For $i\neq j$ define the maps  $\Psi_{i,j}$ by
\begin{equation}\label{evenly}
\Psi_{i,j}(A) := \begin{cases} \tfrac12(E_{i,j} A E_{i,j} + E_{j,i}A E_{j,i}) & i < j\\
 \tfrac{1}{2i}(E_{i,j} A E_{i,j} - E_{j,i}A E_{j,i}) & i > j\ . \end{cases}
\end{equation}
Lemma~\ref{colm} says that these maps are evenly self-adjoint.   

Therefore, let $\Phi_0$ be a CP map that is self-adjoint on $\H_m$ for all $m\in\mathcal{P}[0,1]$, and let $T$ be a real $N\times N$  matrix that is zero on the diagonal. Then 
\begin{equation}\label{evenly2}
\Phi = \Phi_0 + \sum_{i \neq j} T_{i,j} \Psi_{i,j}  
\end{equation}
is evenly self-adjoint, and is CP if and only if 
$C_{\Phi_0 +  \sum_{i \neq  j} T_{i,j}\Psi_{i,j} } \geq 0$.   Notice also that  $\Psi_{i,j}(\one)  = 0$ for all $i\neq j$,
so that, if $\Phi_0$ is unital, then so is the operator $\Phi$ in \eqref{evenly2}.  Likewise, if $\cL_0$ is a QMS generator that is self-adjoint on $\H_m$ for all $m\in\mathcal{P}[0,1]$, then 
\begin{equation}\label{evenly3}
\cL = \cL_0 + \sum_{i \neq j} T_{i,j} \Psi_{i,j} 
\end{equation}
is evenly self-adjoint, and is a QMS generator if and only if $R_{\cL_0 +  \sum_{i \neq j} T_{i,j}\Psi_{i,j} } \geq 0$ where the reduced density matrix is computed with respect to any unital basis. 

  As we explain next, under a non-degeneracy condition on the spectrum of the modular operator, this construction not only gives us a class of examples, but   a complete parameterization of the set of all  evenly self-adjoint  CP maps and QMS generators.

Suppose the eigenvalues $\{\lambda_1,\dots,\lambda_N\}$ of $\sigma$ are such that the 
$N^2-N$ numbers $\frac{\lambda_i}{\lambda_j}$, $i \neq j$ are all distinct, which of course implies that the $N$ eigenvalues of $\sigma$ are all distinct. 
In this case we say that the modular operator has {\em minimally degenerate spectrum} -- the eigenvalue $1$ has multiplicity $N$ and all other eigenvalues are simple. 

\begin{thm}\label{evenlythm1} Suppose $\sigma$ is such that $\Delta$ has minimally degenerate spectrum.  Let $\Phi$ be an evenly self-adjoint CP map. Then there exists a GNS self-adjoint CP map $\Phi_0$ and a real $N\times N$ matrix $T$ that is zero on the diagonal such that $\Phi$ is given by \eqref{evenly2}. If we also assume $\Phi\in CP(\one)$, then $\Phi_0\in CP(\one)$.

Furthermore,
the extreme points of the set $CP_{even}$ of evenly self-adjoint CP maps are of the form either
\begin{equation}\label{evenlyex1}
\Phi(A) =  VAV 
\end{equation}
where $\Delta(V) = V$ and  $V^* = V$, in which case $\Phi$ is GNS self-adjoint, or, for some $\alpha$ such that $\alpha_1 < \alpha_2$,
\begin{equation}\label{evenlyex2}
\Phi(A) =  a\left(  \lambda_{\alpha_1} E_{\alpha'} A E_\alpha  + \lambda_{\alpha_2} E_{\alpha} A E_{\alpha'} +  \sqrt{\lambda_{\alpha_1}\lambda_{\alpha_2}} (e^{i\theta}E_{\alpha'}AE_{\alpha'} + 
e^{-i\theta}  E_{\alpha}AE_{\alpha}) \right)
\end{equation}
where $a>0$ and  $\theta \in [0,2\pi)$, in which case $\Phi$ is not GNS self-adjoint.

\textcolor{red}{\textbf{(What about extrema for $CP(\one)$?)}}

\end{thm}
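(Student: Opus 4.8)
The plan is to describe, for the fixed $\sigma\in\Dens_+$ whose modular operator is minimally degenerate, the whole cone $CP_{even}$ in terms of characteristic matrices with respect to the matrix unit basis $\{E_\alpha\}_{\alpha\in\cJ_N}$ associated to $\sigma$, and to exhibit $\Phi\mapsto C_\Phi$ as a linear isomorphism of $CP_{even}$ onto an explicit direct product of cones; both halves of the theorem then follow by inspecting the factors. Recall from Theorem~\ref{main1} that $\Phi$ is self-adjoint on $\H_m$ iff \eqref{mb3} holds, so $\Phi\in CP_{even}$ iff $C_\Phi\ge0$ and \eqref{mb3} holds for every even $m$. First I would pin down the support of $C_\Phi$. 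Testing \eqref{mb3} against $m=\delta_{1/2}$ shows $(c_\Phi)_{\alpha,\beta}$ and $(c_\Phi)_{\beta',\alpha'}$ vanish simultaneously, and when they do not, $(c_\Phi)_{\alpha,\beta}/(c_\Phi)_{\beta',\alpha'}=\sqrt{\lambda_{\alpha_1}\lambda_{\beta_1}/(\lambda_{\alpha_2}\lambda_{\beta_2})}$. Testing next against the even measures $m_t:=\tfrac12(\delta_{1/2-t}+\delta_{1/2+t})$, for which $(x,y)_{m_t}=\sqrt{xy}\,\cosh\!\big(t\log(x/y)\big)$, and comparing with the $\delta_{1/2}$ case forces $\cosh(ta)=\cosh(tb)$ for all $t\in[0,\tfrac12]$ with $a=\log(\lambda_{\alpha_1}/\lambda_{\beta_1})$, $b=\log(\lambda_{\alpha_2}/\lambda_{\beta_2})$; hence $a^2=b^2$. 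So a nonzero $(c_\Phi)_{\alpha,\beta}$ forces either $\omega_\alpha=\omega_\beta$ or $\lambda_{\alpha_1}\lambda_{\alpha_2}=\lambda_{\beta_1}\lambda_{\beta_2}$.

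This is the one place where minimal degeneracy enters: since the $N^2-N$ ratios $\lambda_i/\lambda_j$ ($i\ne j$) are distinct, $\lambda_a\lambda_b=\lambda_c\lambda_d$ can only hold when $\{a,b\}=\{c,d\}$ as multisets (if $a\ne c$ then $\lambda_a/\lambda_c=\lambda_d/\lambda_b$ forces either $\lambda_a=\lambda_c$ or $(a,c)=(d,b)$, each giving $\{a,b\}=\{c,d\}$). Combined with the previous step, for $\alpha\ne\beta$ one gets $(c_\Phi)_{\alpha,\beta}=0$ unless $\alpha,\beta$ are both diagonal indices or $\beta=\alpha'$ with $\alpha$ off-diagonal. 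Ordering the $N$ diagonal indices first and grouping each pair $\{(i,j),(j,i)\}$, $C_\Phi$ is therefore block diagonal: an $N\times N$ block $D$ and, for each $i<j$, a $2\times2$ block on $\{(i,j),(j,i)\}$. Inside the first block \eqref{mb3} forces $D$ real symmetric; inside the $\{(i,j),(j,i)\}$ block, writing $\alpha=(i,j)$ and $\omega=\omega_\alpha$, it forces $(c_\Phi)_{\alpha,\alpha}=e^{\omega}(c_\Phi)_{\alpha',\alpha'}$ and imposes nothing on $(c_\Phi)_{\alpha,\alpha'}$, so with $b:=(c_\Phi)_{\alpha',\alpha'}$ and $z:=(c_\Phi)_{\alpha,\alpha'}$ the block is $\left(\begin{smallmatrix} e^{\omega}b & z\\ \bar z & b\end{smallmatrix}\right)$, positive iff $b\ge0$ and $|z|^2\le e^{\omega}b^2$. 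Conversely any such family of blocks yields an evenly self-adjoint CP map, so $\Phi\mapsto C_\Phi$ is a linear isomorphism $CP_{even}\cong\{D\ \text{real symmetric},\ D\ge0\}\times\prod_{i<j}K_{ij}$, where $K_{ij}=\{(b,z)\in\R_{\ge0}\times\C:\ |z|^2\le e^{\omega_{(i,j)}}b^2\}$ is a three-dimensional circular cone.

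From this structure the parameterization is routine. Split each $2\times2$ block as $\left(\begin{smallmatrix} e^{\omega}b & 0\\ 0 & b\end{smallmatrix}\right)+\left(\begin{smallmatrix} 0 & z\\ \bar z & 0\end{smallmatrix}\right)$; the first summand is the characteristic matrix of $A\mapsto be^{\omega/2}\big(e^{\omega/2}E_\alpha^{*}AE_\alpha+e^{-\omega/2}E_\alpha AE_\alpha^{*}\big)$, a nonnegative multiple of one of the extremal $\H_{\delta_s}$-self-adjoint maps of Theorem~\ref{Main3}. Collecting these with the $D$-block defines a CP map $\Phi_0$ whose characteristic matrix is manifestly positive and obeys \eqref{mb3GNS}, hence $\Phi_0$ is GNS (indeed universally) self-adjoint. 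The off-diagonal summand $\left(\begin{smallmatrix} 0 & z\\ \bar z & 0\end{smallmatrix}\right)$ on $\{(i,j),(j,i)\}$ is, by computing the characteristic matrices of the $\Psi_{i,j}$ from \eqref{evenly}, exactly that of $T_{i,j}\Psi_{i,j}+T_{j,i}\Psi_{j,i}$ when $z=\tfrac12(T_{i,j}-iT_{j,i})$, and every $z\in\C$ comes from a unique real pair $(T_{i,j},T_{j,i})$; thus $\Phi=\Phi_0+\sum_{i\ne j}T_{i,j}\Psi_{i,j}$ as in \eqref{evenly2}, with $T$ real and zero on the diagonal. Since $\Psi_{i,j}(\one)=0$ (because $E_{i,j}E_{i,j}=0$ for $i\ne j$), $\Phi\in CP(\one)$ forces $\Phi_0(\one)=\Phi(\one)=\one$, so $\Phi_0\in CP(\one)$.

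For the extreme rays, use that the extreme rays of a direct product of convex cones are precisely those of the factors, placed in one coordinate with the rest zero. The extreme rays of the cone of real symmetric positive matrices are the rank-one $vv^{T}$, $v\in\R^{N}$, corresponding to $A\mapsto VAV$ with $V=\sum_k v_k E_{(k,k)}$; such $V$ satisfies $V=V^{*}$ and $\Delta V=V$ (the eigenspace of $\Delta$ for eigenvalue $1$ being the span of $\{E_{(k,k)}\}$ under minimal degeneracy), and conversely every $V$ with $\Delta V=V=V^{*}$ is of this form; this is \eqref{evenlyex1}, and it is GNS self-adjoint. Each $K_{ij}$ is a cone over the closed disk $\{|z|\le e^{\omega_{(i,j)}/2}\}$ at height $b=1$, whose extreme rays are generated by the boundary points $(1,e^{\omega/2}e^{i\theta})$, i.e. by the rank-one positive $2\times2$ blocks; reading off the associated map and setting $a=b/\lambda_{\alpha_2}$, the identities $e^{\omega}=\lambda_{\alpha_1}/\lambda_{\alpha_2}$ and $e^{\omega/2}b=a\sqrt{\lambda_{\alpha_1}\lambda_{\alpha_2}}$ convert the block into exactly \eqref{evenlyex2} with $\alpha=(i,j)$, $a>0$, $\theta=\arg z\in[0,2\pi)$, and this map is not GNS self-adjoint since $(c_\Phi)_{\alpha,\alpha'}=z\ne0$ while $\omega_\alpha\ne\omega_{\alpha'}$, contradicting \eqref{baskrid6}. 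This exhausts the extreme rays. I expect the genuinely delicate steps to be the two preceding the block decomposition: extracting the ``$a^2=b^2$'' support condition from self-adjointness against all even $m$, and the combinatorial fact that minimal degeneracy forbids accidental products $\lambda_a\lambda_b=\lambda_c\lambda_d$ — without the resulting clean block form, none of the downstream bookkeeping works.
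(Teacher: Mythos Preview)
Your proof is correct and follows essentially the same approach as the paper: both establish the block-diagonal structure of $C_\Phi$ from the self-adjointness constraint \eqref{mb3} combined with minimal degeneracy, then split each $2\times2$ block into its diagonal (GNS) and off-diagonal ($\Psi_{i,j}$) parts to obtain \eqref{evenly2}, and finally read off the extreme rays block by block. The only variation is in how the support constraint is extracted --- you test against the one-parameter family $m_t=\tfrac12(\delta_{1/2-t}+\delta_{1/2+t})$ to obtain a $\cosh$ identity forcing $a^2=b^2$, whereas the paper invokes Lemma~\ref{ratiolm} (testing against the arithmetic and geometric means) --- but the resulting dichotomy and the rest of the argument are identical.
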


\begin{thm}\label{evenlythm2} Suppose $\sigma$ is such that $\Delta$ has minimally degenerate spectrum. Let $\cL$ be an evenly self-adjoint QMS generator. Then there exists a GNS self-adjoint CP map $\cL_0$ and a real $N\times N$ matrix $T$ that is zero on the diagonal such that $\cL$ is given by \eqref{evenly3}. Furthermore,
the extreme points of the set $QMS_{even}$ of evenly self-adjoint QMS generators are of the form either
\begin{equation}\label{evenlyex1L}
\cL(A) =  VAV - \tfrac12(V^2 A + A V^2) 
\end{equation}
where $\Delta(V) = V$ and  $V^* = V$, in which case $\cL$ is GNS self-adjoint, or, for some $\alpha$ such that $\alpha_1 < \alpha_2$, 
\begin{eqnarray}\label{evenlyex2L}
\cL(A) &=&  a\left(  \lambda_{\alpha_1} E_{\alpha'} A E_\alpha  + \lambda_{\alpha_2} E_{\alpha} A E_{\alpha'} +  \sqrt{\lambda_{\alpha_1}\lambda_{\alpha_2}} (e^{i\theta}E_{\alpha'}AE_{\alpha'} + 
e^{-i\theta}  E_{\alpha}AE_{\alpha}) \right)\\
&-& \tfrac{a\sqrt{N} }{2}\bigg(  (\lambda_{\alpha_1} E_{\alpha_2,\alpha_2} +  \lambda_{\alpha_2} E_{\alpha_1,\alpha_1})A + A  (\lambda_{\alpha_1} E_{\alpha_2,\alpha_2} +  \lambda_{\alpha_2} E_{\alpha_1,\alpha_1})  \bigg)\ .
\end{eqnarray}
where $a>0$ and  $\theta \in [0,2\pi)$,  in which case $\cL$ is not GNS self-adjoint. 
\end{thm}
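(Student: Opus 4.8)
The plan is to follow the template of Theorems~\ref{Main7} and~\ref{evenlythm1}: use characteristic matrices together with Theorem~\ref{main1} to obtain a block description of evenly self-adjoint operators that annihilate $\one$, peel off the boundary term, and then transfer the order and extreme-point information from the CP result Theorem~\ref{evenlythm1}. \emph{Step 1 (block structure).} Let $\cL$ be a QMS generator; it is Hermitian with $\cL\one=0$. Work in the matrix-unit basis $\{E_\alpha\}$ adapted to $\sigma$, with eigenvalues $\lambda_1,\dots,\lambda_N$. For even $m$ one has $(\lambda_a,\lambda_b)_m=\sqrt{\lambda_a\lambda_b}\,\phi_m(\ln(\lambda_a/\lambda_b))$ with $\phi_m(r)=\int_0^1\cosh((s-\tfrac12)r)\,{\rm d}m(s)$ even, and $\phi_m\equiv1$ for $m=\delta_{1/2}$ while $\phi_m(r)=\cosh(r/2)$ for $m=\tfrac12(\delta_0+\delta_1)$. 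Substituting into the criterion \eqref{mb3} and demanding it for \emph{all} even $m$ forces, via $m=\delta_{1/2}$, the relation $(c_\cL)_{\alpha,\beta}=\sqrt{\lambda_{\alpha_1}\lambda_{\beta_1}/(\lambda_{\alpha_2}\lambda_{\beta_2})}\,(c_\cL)_{\beta',\alpha'}$, and, via $m=\tfrac12(\delta_0+\delta_1)$, that $(c_\cL)_{\alpha,\beta}=0$ unless $|\ln(\lambda_{\alpha_1}/\lambda_{\beta_1})|=|\ln(\lambda_{\alpha_2}/\lambda_{\beta_2})|$. Under minimal degeneracy the latter forces $(c_\cL)_{\alpha,\beta}=0$ unless $\alpha=\beta$, or $\alpha=(i,i)$ and $\beta=(j,j)$, or $\beta=\alpha'$ with $\alpha$ non-diagonal; so $C_\cL$ is block diagonal, with a real symmetric $N\times N$ block $B$ on the diagonal matrix units and, for each $i<j$, a $2\times2$ block on $\{(i,j),(j,i)\}$ with diagonal entries $\lambda_i\lambda_j^{-1}d_{ij}$ and $d_{ij}$ and off-diagonal entry $p_{ij}$ (the corner being fixed by the first relation). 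The condition $\cL\one=0$ pins the diagonal of $B$ in terms of the $d_{ij}$, and by Lemma~\ref{redLm}, $\cL\in QMS$ iff all these $2\times2$ blocks are positive semidefinite and the restriction of $B$ to the orthogonal complement of $v_1=N^{-1/2}(1,\dots,1)$ is positive semidefinite.

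\emph{Step 2 (the decomposition \eqref{evenly3}).} The $m=\delta_0$ case of \eqref{mb3} shows that a Hermitian operator killing $\one$ is GNS self-adjoint precisely when it has the block shape above with \emph{diagonal} pair blocks. Let $T$ be the real matrix, zero on the diagonal, built from the entries $p_{ij}$; one checks with \eqref{evenly} that $\sum_{i\ne j}T_{ij}\Psi_{ij}$ has characteristic matrix equal to the off-diagonal part of $C_\cL$ (the pair blocks contributing exactly the $p_{ij}$, and nothing elsewhere). Then $\cL_0:=\cL-\sum_{i\ne j}T_{ij}\Psi_{ij}$ has diagonal pair blocks and the same $B$, hence is GNS self-adjoint; deleting the off-diagonal entry of a positive semidefinite $2\times2$ block preserves positivity and does not touch $B$, so $\cL_0$ is still a QMS generator, and $\cL_0\one=0$ since $\Psi_{ij}(\one)=0$. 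Thus $\cL_0\in QMS_{GNS}$ and \eqref{evenly3} holds.

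\emph{Step 3 (extreme points).} Since $\delta_{1/2}$ is even, Lemma~\ref{orthinv} with $s=\tfrac12$ shows that in the LGKS decomposition \eqref{LGKS} $\cL=\Phi_G+\Psi$ (with $\Psi$ CP, $\tr V_j=0$, so $\Psi\in\widehat{\mathfrak{H}}_{\mathcal{S}}^\perp$) both summands are KMS self-adjoint; Step 1 then shows $\Psi$ is evenly self-adjoint with $\Psi(\one)$ diagonal, and, applied to $\Phi_G=L_{G^*}+R_G$, that $\Phi_G$ being evenly self-adjoint forces $G$ diagonal with $\Im G$ a multiple of $\one$, hence, up to an irrelevant imaginary multiple of $\one$, $G=-\tfrac12\Psi(\one)$ (self-adjoint, $K=0$). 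This exhibits a bijection between $QMS_{even}$ and the evenly self-adjoint CP maps in $\widehat{\mathfrak{H}}_{\mathcal{S}}^\perp$, $\Psi\mapsto\cL_\Psi$ with $\cL_\Psi(A)=\Psi(A)-\tfrac12(\Psi(\one)A+A\Psi(\one))$. Exactly as in Theorem~\ref{Main7}, for $\cL_{\Psi_1},\cL_{\Psi_2}\in QMS_{even}$ the difference $\cL_{\Psi_1}-\cL_{\Psi_2}$ has $\widehat{\mathfrak{H}}_{\mathcal{S}}$-part a left-plus-right multiplication and $\widehat{\mathfrak{H}}_{\mathcal{S}}^\perp$-part $\Psi_1-\Psi_2$, so by Lemmas~\ref{orthchar} and~\ref{redchar} it lies in $QMS$ iff $\Psi_1-\Psi_2$ is CP; thus $\Psi\mapsto\cL_\Psi$ is an order isomorphism onto its image, and the extreme rays of $QMS_{even}$ are the images of the extreme rays of the cone of evenly self-adjoint CP maps in $\widehat{\mathfrak{H}}_{\mathcal{S}}^\perp$. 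By Lemma~\ref{redchar} (the Kraus operators of a CP summand of a traceless-Kraus map are again traceless) these are precisely the extreme rays of $CP_{even}$ listed in Theorem~\ref{evenlythm1} that lie in $\widehat{\mathfrak{H}}_{\mathcal{S}}^\perp$, namely $A\mapsto VAV$ with $\Delta V=V=V^*$, $\tr V=0$, and the maps \eqref{evenlyex2}. Applying $\Psi\mapsto\cL_\Psi$ — in the second case using $\Psi(\one)=a\sqrt N(\lambda_{\alpha_1}E_{\alpha_2,\alpha_2}+\lambda_{\alpha_2}E_{\alpha_1,\alpha_1})$ — produces exactly \eqref{evenlyex1L} and \eqref{evenlyex2L}, and since on $\widehat{\mathfrak{H}}_{\mathcal{S}}^\perp$ the GNS decomposition is invariant (Lemma~\ref{orthinv} with $s=0$), $\cL_\Psi$ is GNS self-adjoint iff $\Psi$ is, which gives the stated dichotomy.

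\emph{Expected main obstacle.} The delicate step is Step 1. One must exploit that self-adjointness is imposed for \emph{all} even $m$ simultaneously — a single even measure, e.g.\ the BKM one, is insufficient and does not even respect the $\widehat{\mathfrak{H}}_{\mathcal{S}}$-decomposition — in order to pin down $C_\cL$, and then run the minimal-degeneracy case analysis carefully, in particular ruling out accidental coincidences among the ratios $\lambda_i/\lambda_j$ that would reopen an off-diagonal block. Once Step 1 is in place, Steps 2--3 are essentially bookkeeping together with Theorems~\ref{Main7} and~\ref{evenlythm1}.
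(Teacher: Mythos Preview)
Your proposal is correct. Steps 1 and 2 coincide with the paper's argument: both obtain the block structure of $C_\cL$ from Lemma~\ref{zerolm}/Lemma~\ref{ratiolm} (your choice of the two measures $\delta_{1/2}$ and $\tfrac12(\delta_0+\delta_1)$ is exactly the arithmetic/geometric pair that drives the proof of Lemma~\ref{ratiolm}), and both verify that $\cL_0$ is a QMS generator by noting that the passage from $C_\cL$ to $R_\cL$ via the unitary of Remark~\ref{remark_unital_basis} leaves the $2\times2$ blocks untouched, so zeroing their off-diagonal entries preserves positivity of the reduced matrix.

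For the extreme points (Step 3) you take a genuinely different route. The paper argues directly in terms of the reduced characteristic matrix, saying only that ``the description of the extreme points follows the same reasoning as in the proof of Theorem~\ref{evenlythm1}''---i.e., one reads off rank-one pieces of each block of $R_\cL$. You instead transport the whole problem through an order isomorphism $\Psi\mapsto\cL_\Psi$ in the style of Theorem~\ref{Main7}: you show that for $\cL\in QMS_{even}$ the $\widehat{\mathfrak{H}}_{\mathcal{S}}$-part forces $G$ to be real diagonal (hence $K=0$ and $G=-\tfrac12\Psi(\one)$), so that the correspondence with $CP_{even}\cap\widehat{\mathfrak{H}}_{\mathcal{S}}^\perp$ is bijective and order-preserving, and then you pull back the extreme rays from Theorem~\ref{evenlythm1}. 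This buys you an explicit structural statement (the analogue of Theorem~\ref{Main7} in the even case, with the pleasant simplification $K=0$) that the paper does not isolate; the paper's approach is shorter but less informative. Your observation that passing from $V$ to $V-\tfrac1N\tr(V)\one$ does not change the generator in \eqref{evenlyex1L} is what reconciles your tracelessness restriction with the statement as written.
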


The proofs of these theorems are very similar. We first record some useful lemmas.

\begin{lm}\label{ratiolm} Let $a,b,c,d >0$ with $a\leq b$ and $c\leq d$. Then 
\begin{equation}\label{evenly7}
 \frac{(a,b)_m}{(c,d)_m}
 \end{equation} is independent of the even measure $m$ if and only if $\frac{a}{c} = \frac{b}{d}$. 
\end{lm}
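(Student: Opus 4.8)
The plan is to reparametrize the weighted mean so that all of its dependence on $m$ is isolated in a single even function of one real variable. Writing $s=\tfrac12+t$, one has $x^{s}y^{1-s}=\sqrt{xy}\,e^{t(\ln x-\ln y)}$; hence, letting $\widetilde m$ be the push-forward of $m$ under $s\mapsto s-\tfrac12$ --- a probability measure on $[-\tfrac12,\tfrac12]$ that is symmetric about $0$ exactly when $m$ is even --- one obtains, for every even $m$,
$$
(x,y)_m=\sqrt{xy}\,\phi_m(\ln x-\ln y),\qquad \phi_m(w):=\int_{-1/2}^{1/2}\cosh(tw)\,{\rm d}\widetilde m(t),
$$
where passing from $e^{tw}$ to $\cosh(tw)$ uses the symmetry of $\widetilde m$. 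Note that $\phi_m$ is even and $\phi_m(0)=1$. Establishing this identity is the first step.

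Next I would use it to reduce the lemma to a one-variable statement. Since $a\leq b$ and $c\leq d$, set $p:=\ln b-\ln a\geq 0$ and $q:=\ln d-\ln c\geq 0$; by evenness of $\phi_m$,
$$
\frac{(a,b)_m}{(c,d)_m}=\sqrt{\frac{ab}{cd}}\,\frac{\phi_m(p)}{\phi_m(q)}\ .
$$
Since $\sqrt{ab/cd}$ does not involve $m$, the quantity in \eqref{evenly7} is independent of the even measure $m$ if and only if $\phi_m(p)/\phi_m(q)$ is; and $p=q$ is equivalent to $b/a=d/c$, i.e.\ to $\tfrac ac=\tfrac bd$. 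So it remains only to show that $\phi_m(p)/\phi_m(q)$ is independent of even $m$ precisely when $p=q$.

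The ``if'' direction is trivial. For the ``only if'' direction I would argue by contraposition: suppose $p\neq q$, say $p>q\geq 0$. Evaluating at the KMS measure $m=\delta_{1/2}$ gives $\widetilde m=\delta_0$, hence $\phi_m\equiv 1$ and the ratio is $1$; evaluating at $m=\tfrac12(\delta_0+\delta_1)$, which is even and lies in $\mathcal{P}[0,1]$, gives $\widetilde m=\tfrac12(\delta_{-1/2}+\delta_{1/2})$, hence $\phi_m(w)=\cosh(w/2)$ and the ratio is $\cosh(p/2)/\cosh(q/2)>1$ since $\cosh$ is strictly increasing on $[0,\infty)$. Thus two even measures already produce different values of the ratio, so the quantity in \eqref{evenly7} is not independent of $m$.

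I do not anticipate a genuine obstacle: the only points needing mild care are the bookkeeping of signs and evenness in the reparametrization, and the (immediate) check that the two test measures $\delta_{1/2}$ and $\tfrac12(\delta_0+\delta_1)$ are even elements of $\mathcal{P}[0,1]$. One could replace the second test measure by the BKM/uniform measure, but the two-point mass makes $\phi_m$ completely explicit and reduces the final comparison to a one-line monotonicity statement.
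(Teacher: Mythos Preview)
Your proof is correct. Both your argument and the paper's use the same two even test measures $m=\delta_{1/2}$ and $m=\tfrac12(\delta_0+\delta_1)$ to settle the ``only if'' direction, so at the core the strategies agree. The difference is in the reduction: the paper works directly with the four numbers, obtaining $a+b=K(c+d)$ and $\sqrt{ab}=K\sqrt{cd}$ from the two test measures and then solving algebraically via $(\sqrt a\pm\sqrt b)^2=K(\sqrt c\pm\sqrt d)^2$ together with the ordering hypotheses. You instead reparametrize $s=\tfrac12+t$ to factor $(x,y)_m=\sqrt{xy}\,\phi_m(\ln x-\ln y)$ for even $m$, which cleanly isolates the $m$-dependence in the single even function $\phi_m$ and reduces the question to whether $\phi_m(p)/\phi_m(q)$ can be constant in $m$ when $p\neq q$; the two test measures then give $\phi_m\equiv 1$ and $\phi_m(w)=\cosh(w/2)$, and strict monotonicity of $\cosh$ on $[0,\infty)$ finishes it. Your packaging is a bit more conceptual and makes transparent why two measures already suffice; the paper's algebra is shorter but less structural. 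Either way the content is the same.
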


\begin{proof}
Suppose $\frac{a}{c} = \frac{b}{d} = K$.  Then $a = Kc$ and $b= Kd$ so that for any $m$, 
$$\int_0^1a^s b^{1-s}{\rm d}m = \int_0^1(Kc)^s (Kd)^{1-s}{\rm d}m = K\int_0^1 c^s d^{1-s}{\rm d}m\ ,$$
and the ratio in \eqref{evenly7} is independent of $m$, even or not. 

For the converse, suppose that the ratio in \eqref{evenly7} is $K$ for all even $m$. Then taking $m =\delta_{1/2}$ and $m =\tfrac12(\delta_0 + \delta_1)$, giving the geometric and arithmetic means respectively, we have 
$$a+b = K(c+d) \quad {\rm and}\quad \sqrt{ab} = K \sqrt{cd}\ ,$$
from which we deduce
$$
(\sqrt{a} + \sqrt{b})^2 = K(\sqrt{c} + \sqrt{d})^2  \quad {\rm and}\quad (\sqrt{a} - \sqrt{b})^2 = K(\sqrt{c} - \sqrt{d})^2
$$
Since $a\leq b$ and $c\leq d$, 
$$\sqrt{a}  = \frac{\sqrt{a} + \sqrt{b}}{2} +  \frac{\sqrt{a} - \sqrt{b}}{2}  = \sqrt{K} \frac{\sqrt{c} + \sqrt{d}}{2} + \sqrt{K} \frac{\sqrt{c} - \sqrt{d}}{2}  =  \sqrt{K} \sqrt{c}\ ,$$
and likewise, $\sqrt{b} =  \sqrt{K}  \sqrt{d}$. \textcolor{red}{This implies that $\frac{a}{c}=\frac{b}{d}=K$.}
\end{proof}

\begin{lm}\label{zerolm} Let $\Phi$ be an evenly symmetric map. Suppose $1 \leq i,j,k,\ell \leq N$ are such that  
 there exist even $m_1$ and $m_2$  for which 
\begin{equation}\label{evenly4}
 \frac{(\lambda_i,\lambda_k)_{m_1}} {(\lambda_j,\lambda_\ell)_{m_1}} \neq  \frac{(\lambda_i,\lambda_k)_{m_2}} {(\lambda_j,\lambda_\ell)_{m_2}}
\end{equation}
Let $C_\Phi$ be the characteristic matrix of $\cL$ computed with respect to  $\{E_{i,j}\}$.    Then $(c_\Phi)_{(i,j),(k,\ell)}= 0$.
\end{lm}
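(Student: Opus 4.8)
The plan is to read the claim directly off the characteristic-matrix characterization of self-adjointness in Theorem~\ref{main1}, applied twice. Since $\Phi$ is evenly self-adjoint it is self-adjoint on both $\H_{m_1}$ and $\H_{m_2}$, so by \eqref{mb3} --- applied with $\alpha = (i,j)$ and $\beta = (k,\ell)$, for which $\alpha' = (j,i)$, $\beta' = (\ell,k)$, $\lambda_{\alpha_1} = \lambda_i$, $\lambda_{\alpha_2} = \lambda_j$, $\lambda_{\beta_1} = \lambda_k$, $\lambda_{\beta_2} = \lambda_\ell$ --- one has
\[
(c_\Phi)_{(i,j),(k,\ell)} \;=\; \frac{(\lambda_i,\lambda_k)_{m_1}}{(\lambda_j,\lambda_\ell)_{m_1}}\,(c_\Phi)_{(\ell,k),(j,i)}
\;=\; \frac{(\lambda_i,\lambda_k)_{m_2}}{(\lambda_j,\lambda_\ell)_{m_2}}\,(c_\Phi)_{(\ell,k),(j,i)} .
\]
Subtracting the last two expressions, $(c_\Phi)_{(\ell,k),(j,i)}$ gets multiplied by the difference of the two ratios, which is nonzero by the hypothesis \eqref{evenly4}; hence $(c_\Phi)_{(\ell,k),(j,i)} = 0$, and then substituting back into either equality gives $(c_\Phi)_{(i,j),(k,\ell)} = 0$, which is the assertion. (The means $(x,y)_m$ are real and positive, so subtracting and dividing cause no trouble.)

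Two small preliminary points need to be dispatched. Theorem~\ref{main1} is stated for Hermitian $\Phi$, so I would first observe that in every use of this lemma $\Phi$ is Hermitian --- a CP map is Hermitian by inspection of any Kraus representation, and a QMS generator is Hermitian by the LGKS form --- or else simply add ``Hermitian'' to the hypotheses; either way \eqref{mb3} is legitimately available. One should also double-check the index bookkeeping in passing from the $\alpha,\beta$ notation of Theorem~\ref{main1} to the $(i,j),(k,\ell)$ notation here, but this is routine.

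I do not anticipate any real obstacle: the entire content of the lemma is contained in Theorem~\ref{main1}, and the argument is nothing more than ``two linear equations in one unknown with distinct coefficients force that unknown to be zero.'' The role of this lemma is purely preparatory --- it kills most entries of the characteristic matrix of an evenly self-adjoint map --- and the substantive work is postponed to the analysis (via Lemma~\ref{ratiolm}) of exactly which index quadruples $(i,j,k,\ell)$ fail the separating condition \eqref{evenly4}.
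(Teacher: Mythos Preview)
Your argument is correct and is essentially identical to the paper's own proof: apply \eqref{mb3} from Theorem~\ref{main1} once for $m_1$ and once for $m_2$, then use the hypothesis \eqref{evenly4} to force the entry to vanish. Your remarks about the Hermiticity hypothesis and the index bookkeeping are appropriate but do not change the substance.
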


\begin{proof}
By Theorem~\ref{main1}, for $\Phi$ to be self-adjoint on $\H_{m_1}$,
$$
(c_\Phi)_{(i,j),(k,\ell)}  = \frac{(\lambda_i,\lambda_k)_{m_1}} {(\lambda_j,\lambda_\ell)_{m_1}} (c_\Phi)_{(\ell,k),(j,i)} \ ,
$$
while for $\Phi$ to be self-adjoint on $\H_{m_2}$, the same relation must hold with $m_1$ replaced by $m_2$. By \eqref{evenly4}, this means that $(c_\Phi)_{(i,j),(k,\ell)}= 0$.
\end{proof} 

\begin{proof}[Proof of Theorem~\ref{evenlythm1}] Returning to the notation $\alpha = (\alpha_1,\alpha_2)$ and $\alpha' = (\alpha_2,\alpha_1)$,  by Lemma~\ref{zerolm},
$(c_\Phi)_{\alpha,\beta}= 0$ unless $\frac{(\lambda_{\alpha_1},\lambda_{\beta_1})_m}{(\lambda_{\alpha_2},\lambda_{\beta_2})_m}$ is independent of $m$ even.  By Lemma~\ref{ratiolm}, this means that either
$$
\frac{\lambda_{\alpha_1}}{\lambda_{\alpha_2}}   =  \frac{\lambda_{\beta_1}}{\lambda_{\beta_2}}  \quad{\rm or}\quad \frac{\lambda_{\alpha_1}}{\lambda_{\beta_2}}   =  \frac{\lambda_{\beta_1}}{\lambda_{\alpha_2}} \ .
$$
Since the spectrum of $\Delta$ is minimally degenerate, the first of these conditions is satisfied if and only if  either ($\alpha = \beta$) or ($\alpha = \alpha'$ and $\beta = \beta'$). The second of these conditions is satisfied if and only if
$\alpha = \beta$ or $\alpha = \beta'$. Thus, when $\Delta$ has minimally degenerate spectrum and $\Phi$ is evenly self-adjoint, then $(c_\Phi)_{\alpha,\beta} = 0$ unless one of the following is satisfied:

\smallskip
\noindent{\it (1)} $\alpha = \alpha'$ and $\beta = \beta'$

\smallskip
\noindent{\it (2)} $\alpha = \beta$

\smallskip
\noindent{\it (3)} $\alpha = \beta'$

If we order the indices so that $(1,1),\dots ,(N,N)$ come first, followed by consecutive pairs $(i,j)$ and $(j,i)$ with $i< j$, $C_\Phi$ will have an $N\times N$ block in the upper left, and then a string of $\binom{N}{2}$ $2\times 2$ blocks down the diagonal, with all other entries being zero.   Consider one of these $2\times 2$ blocks. The diagonal entries are $(c_\Phi)_{\alpha,\alpha}$  and $(c_\Phi)_{\alpha',\alpha'}$ for some $\alpha$ with $\alpha_1<  \alpha_2$.   Again by  Theorem~\ref{main1}, these are related by 
$$
(c_\Phi)_{\alpha,\alpha}  = \frac{\lambda_{\alpha_1}}{\lambda_{\alpha_2}}(c_\Phi)_{\alpha',\alpha'} 
$$
Hence if we order the indices so that $\alpha$ comes before $\alpha'$, the $2\times 2$ block has the form
$$a\left[\begin{array}{cc}  \lambda_{\alpha_1}  & \zeta\\ \overline{\zeta} & \lambda_{\alpha_2}\end{array}\right]$$
for some $a\geq 0$. Then if $a\neq 0$, we must have $|\zeta|^2 \leq \lambda_{\alpha_1}\lambda_{\alpha_2}$ if $\Phi$ is completely positive. If we write $\zeta = x+iy$, $x,y\in \R$, then 
\begin{equation}\label{2x2block}
a\left[\begin{array}{cc}  \lambda_{\alpha_1}  & \zeta\\ \overline{\zeta} & \lambda_{\alpha_2}\end{array}\right] =   a\left[\begin{array}{cc}  \lambda_{\alpha_1}  & 0\\ 0 & \lambda_{\alpha_2}\end{array}\right]  + x \Psi_{\alpha_1,\alpha_2} + y \Psi_{\alpha_2,\alpha_1} \ .
\end{equation}
Applying this to all such blocks we see that $\Phi$ has the form 
 \begin{equation}\label{evenly9}
\Phi = \Phi_0 + \sum_{i \neq j} T_{i,j} \Psi_{i,j}  
\end{equation}
where $C_{\Phi_0}$ is the matrix obtained  by setting all off diagonal elements of $C_\Phi$ outside the upper left $N\times N$ block equal to zero. Since $C_\Phi$ is positive semidefinite, so is $C_{\Phi_0}$. It follows that $\Phi_0$ is CP and GNS symmetric. Also note that $\Phi(\one)=\one$ is equivalent to $\Phi_0(\one)=\one$, since the maps $\Psi_{i,j}$ annihilate $\one$.

Maps of the form ~\eqref{evenlyex1} are readily seen to be extreme points of $CP_{even}$, just as in the proof of Theorem ~\ref{Main3}. Other extrema $\Phi$ are obtained by letting the only non-zero entries in $C_\Phi$ be those of a $2\times 2$ block like ~\eqref{2x2block} with $|\zeta|$ chosen as to give equality in the condition $|\zeta|^2 \leq \lambda_{\alpha_1}\lambda_{\alpha_2}$. These extreme points are those of the form ~\eqref{evenlyex2}. In particular note that maps of the form ~\eqref{SmsCP}, which were extreme in $CP_{GNS}$, are not extreme in $CP_{even}$, since they can be obtained as convex combinations of two maps in the form ~\eqref{evenlyex2} using opposite values of $\theta$.
\end{proof}

\begin{proof}[Proof of Theorem~\ref{evenlythm2}] Just as in the proof of Theorem~\ref{evenlythm1} above, we conclude that $C_\cL$ for an evenly self-adjoint QMS generator $\cL$ has the block structure of an upper-left $N\times N$ block followed by $\binom{N}{2}$ $2\times2$ blocks down the diagonal in the form~\eqref{2x2block}, which justifies the formula
 \begin{equation}
\cL = \cL_0 + \sum_{i \neq j} T_{i,j} \Psi_{i,j}  
\end{equation}
with $\cL_0$ being GNS self-adjoint. It remain to prove that $\cL_0$ is a QMS generator.

According to Lemma~\ref{redLm}, the reduced characteristic matrix $R_\cL$ of $\cL$ computed in some unital orthonormal basis must be positive semidefinite. Moreover, as a consequence of Lemma~\ref{exg} and as described in Remark~\ref{remark_unital_basis}, $R_\cL$ can be obtained as the lower-right $(N^2-1)$-dimensional block of the matrix $\widetilde{C}_\cL = UC_\cL U^\ast$, where $U$ is an unitary matrix containing a nonzero upper-left $N\times N$ block and the identity for its lower-right $(N^2-N)\times(N^2-N)$ block. In particular $R_\cL$ has a block structure with an upper-left $(N-1)\times(N-1)$ block and the same $\binom{N}{2}$ $2\times2$ blocks down the diagonal as $C_{\cL}$, and by assumption each of these blocks is positive semidefinite, which implies the same condition $|\zeta|^2 \leq \lambda_{\alpha_1}\lambda_{\alpha_2}$ described in the proof of the previous theorem. Meanwhile, $R_{\cL_0}$ also has a block structure with the same upper-left $(N-1)\times(N-1)$ block as $R_\cL$ (because the upper-left $N\times N$ blocks for $C_\cL$ and $C_{\cL_0}$ are the same), and with a diagonal lower-right $(N^2-N)\times(N^2-N)$ block. Hence $R_{\cL_0}$ is positive semidefinite, implying that $\cL_0$ is a QMS generator.

The description of the extreme points of $QMS_{even}$ follows the same reasoning as in the proof of Theorem~\ref{evenlythm1} above.
\end{proof}

\section{From KMS self-adjointness to BKM self-adjointness}

 Recall that $\cM_{BKM}$ denotes the operator on $\fH$ given by
\begin{equation}\label{Mdef}
\cM_{BKM}(A) = \int_0^1 \sigma^{s} A \sigma^{1-s} {\rm d}s\ .
\end{equation}
Then
\begin{equation}\label{Minv}
\cM_{BKM}^{-1}(A) = \int_0^\infty \frac{1}{t+ \sigma } A \frac{1}{t+ \sigma }  {\rm d}t \ ,
\end{equation}
so that $\cM_{BKM}^{-1}$ is CP.  
Now define the unital CP map
\begin{equation}\label{transfor1}
\Psi(A) = \int_0^\infty \frac{\sqrt{\sigma}}{t+ \sigma } A \frac{\sqrt{\sigma}}{t+ \sigma }  {\rm d}t\ .
\end{equation}

\begin{thm} The map $\Phi \mapsto \Psi\circ \Phi$ is a one-to-one map from the set of KMS self-adjoint CP maps into the set of BKM self-adjoint CP maps, taking unital maps to unital maps.
\end{thm}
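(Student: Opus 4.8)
The plan is to exploit the factorization
$\Psi = \cM_{BKM}^{-1}\circ\cM_{\delta_{1/2}}$, where $\cM_{\delta_{1/2}}(A)=\sigma^{1/2}A\sigma^{1/2}$ is the KMS modular operator; this is immediate from \eqref{Minv} since $\sigma$ commutes with $(t+\sigma)^{-1}$. First I would record the properties of $\Psi$ that make everything work. It is CP, being the composition of the CP map $\cM_{BKM}^{-1}$ from \eqref{Minv} with the single-Kraus-operator CP map $\cM_{\delta_{1/2}}$; it is invertible, being a composition of invertible operators on $\fH$; it is unital, since $\Psi(\one)=\int_0^\infty\sigma(t+\sigma)^{-2}\,{\rm d}t=\one$; and it is dagger-self-adjoint, $\Psi^\dagger=\Psi$. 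For the last two intertwining-type facts I would use that, by \eqref{integral_M2} (equivalently, Lemma~\ref{charMM} and the discussion after it), every $\cM_m$ is a function of the commuting positive operators $\Delta$ and $R_\sigma$, so the whole family $\{\cM_m\}$ is simultaneously diagonalized, pairwise commuting, with each $\cM_m$ dagger-self-adjoint when $m$ is even; hence $\cM_{\delta_{1/2}}$ commutes with $\cM_{BKM}$ and $\Psi^\dagger=(\cM_{BKM}^{-1})^\dagger\circ\cM_{\delta_{1/2}}^\dagger=\cM_{BKM}^{-1}\circ\cM_{\delta_{1/2}}=\Psi$.

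Next, given a KMS self-adjoint CP map $\Phi$, I would check that $\Psi\circ\Phi$ lies in the target set. Complete positivity is automatic, a composition of CP maps being CP; and since CP maps are Hermitian, $\Psi\circ\Phi$ is Hermitian, so ``self-adjoint on $\H_{BKM}$'' is meaningful for it. For BKM self-adjointness I would invoke the criterion \eqref{sacahr}: it suffices to verify $\cM_{BKM}\circ(\Psi\circ\Phi)=(\Psi\circ\Phi)^\dagger\circ\cM_{BKM}$. The left-hand side collapses to $\cM_{\delta_{1/2}}\circ\Phi$ because $\cM_{BKM}\circ\cM_{BKM}^{-1}=I$. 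The right-hand side equals $\Phi^\dagger\circ\Psi\circ\cM_{BKM}=\Phi^\dagger\circ\cM_{BKM}^{-1}\circ\cM_{\delta_{1/2}}\circ\cM_{BKM}=\Phi^\dagger\circ\cM_{\delta_{1/2}}$, using $\Psi^\dagger=\Psi$ and then the commutation of $\cM_{\delta_{1/2}}$ with $\cM_{BKM}$. Thus the BKM self-adjointness of $\Psi\circ\Phi$ is equivalent to $\cM_{\delta_{1/2}}\circ\Phi=\Phi^\dagger\circ\cM_{\delta_{1/2}}$, which by \eqref{sacahr} is exactly the hypothesis that $\Phi$ is KMS self-adjoint. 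Since this is an equivalence, it also confirms the map lands precisely in the set of BKM self-adjoint CP maps.

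It then remains to note that $\Phi\mapsto\Psi\circ\Phi$ is one-to-one, which follows at once from the invertibility of $\Psi$, and that it takes unital maps to unital maps, since $\Phi(\one)=\one$ forces $(\Psi\circ\Phi)(\one)=\Psi(\one)=\one$.

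I do not anticipate a genuine obstacle; the argument is organizational rather than technical. The one point that must be handled with care is the bookkeeping with the modular data — namely that all the $\cM_m$ commute, that $\cM_m^\dagger=\cM_m$ for even $m$, and hence that $\Psi^\dagger=\Psi$ together with $\cM_{BKM}\circ\Psi=\cM_{\delta_{1/2}}=\Psi\circ\cM_{BKM}$ — because these are precisely the relations that make the self-adjointness condition transform correctly under composition with $\Psi$. All of them are immediate once one observes that $\Delta$ and $R_\sigma$ commute and each $\cM_m$ is built from them.
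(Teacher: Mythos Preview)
Your proof is correct and follows essentially the same approach as the paper: both rest on the factorization $\cM_{BKM}\circ\Psi=\cM_{\delta_{1/2}}$ and the fact that the modular operators $\cM_m$ are mutually commuting and dagger-self-adjoint for even $m$. The only cosmetic difference is that the paper computes $\langle B,\Psi(\Phi(A))\rangle_{BKM}$ directly at the inner-product level, whereas you translate the same identities into the operator criterion \eqref{sacahr}; your route has the small bonus of making the equivalence (KMS self-adjointness of $\Phi$ $\Leftrightarrow$ BKM self-adjointness of $\Psi\circ\Phi$) explicit.
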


\begin{proof}  Let $\Phi$ be KMS self-adjoint. Then
\begin{eqnarray*}
\langle B, \Psi(\Phi(A))\rangle_{BKM} &=& \tr[B^* \cM_{BKM}(\Psi(\Phi(A)))]\  = \tr[B^* \sqrt{\sigma} \Phi(A)\sqrt{\sigma}] \\
&=& \langle B, \Phi (A) \rangle_{KMS} = \langle \Phi(B),A\rangle_{KMS}\\
&=&  \tr[\sqrt{\sigma} (\Phi(B))^* \sqrt{\sigma} A] =   \tr[\cM_{BMK}^{-1}(\sqrt{\sigma} (\Phi(B))^* \sqrt{\sigma}), \cM_{BKM}(A)]\\
&=&  \langle \Psi(\Phi(B)), A\rangle_{BKM}\ .
\end{eqnarray*}
The rest follows from the fact that $\Psi$ is invertible, CP, and unital. 

\end{proof}

Evidently, a similar construction is possible for any $m$ such that $\cM_m^{-1}$ is CP. This is not the case for every even $m$, but there are examples other than $BKM$ and $KMS$:

\begin{thm}  For $s\in [0,1]$ define $m_s := \frac12(\delta_s + \delta_{1-s})$. Then for each $s\in [0,1]$,  the operator $\cM_{m_s}^{-1}$ is completely positive for all $\sigma\in \Dens_+$. 
\end{thm}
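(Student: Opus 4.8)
The plan is to reduce complete positivity of $\cM_{m_s}^{-1}$ to a scalar positive-definiteness statement about the function $1/\cosh$, and then package that statement as an explicit integral representation of $\cM_{m_s}^{-1}$ as a superposition, with a strictly positive weight, of manifestly completely positive maps — exactly in the spirit of formula \eqref{Minv} for $\cM_{BKM}^{-1}$.

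\textbf{Step 1 (reduction to an $N\times N$ matrix).} Let $\lambda_1,\dots,\lambda_N$ be the eigenvalues of $\sigma$ and $\{E_\alpha\}$ the associated matrix unit basis. By Lemma~\ref{CPchar} together with Lemma~\ref{charMM}, and the ordering of the basis discussed after Lemma~\ref{charMM}, the characteristic matrix of $\cM_{m_s}^{-1}$ vanishes outside its upper-left $N\times N$ block, whose $(i,j)$ entry is $(\lambda_i,\lambda_j)_{m_s}^{-1}$. Hence $\cM_{m_s}^{-1}$ is completely positive if and only if the $N\times N$ matrix $\big[(\lambda_i,\lambda_j)_{m_s}^{-1}\big]$ is positive semidefinite. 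Since $(\lambda_i,\lambda_j)_{m_s}=\tfrac12(\lambda_i^s\lambda_j^{1-s}+\lambda_i^{1-s}\lambda_j^s)=\sqrt{\lambda_i\lambda_j}\,\cosh\!\big((s-\tfrac12)(\log\lambda_i-\log\lambda_j)\big)$, conjugating by the invertible diagonal matrix $\diag(\lambda_1^{-1/2},\dots,\lambda_N^{-1/2})$ shows this is equivalent to positive semidefiniteness of $\big[\,1/\cosh(c(\mu_i-\mu_j))\,\big]$, where $c:=s-\tfrac12$ and $\mu_i:=\log\lambda_i$ range over arbitrary real numbers.

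\textbf{Step 2 (the Fourier-analytic input).} The key fact is that $x\mapsto 1/\cosh(cx)$ is a positive-definite function on $\R$. I would derive this from the classical identity $\int_{\R}\frac{e^{-i\xi t}}{\cosh t}\,dt=\frac{\pi}{\cosh(\pi\xi/2)}$: Fourier inversion and rescaling give, for $c\neq 0$,
\[
\frac{1}{\cosh(cx)}=\frac{1}{2|c|}\int_{\R}\frac{1}{\cosh(\pi\omega/(2|c|))}\,e^{i\omega x}\,d\omega ,
\]
an integral against the strictly positive weight $1/\cosh(\pi\omega/(2|c|))$. Then for any complex $z_1,\dots,z_N$,
\[
\sum_{i,j}\overline{z_i}z_j\,\frac{1}{\cosh(c(\mu_i-\mu_j))}=\frac{1}{2|c|}\int_{\R}\frac{1}{\cosh(\pi\omega/(2|c|))}\,\Big|\sum_i z_i e^{i\omega\mu_i}\Big|^2 d\omega\ \ge\ 0 ,
\]
which proves Step 1 for $s\neq\tfrac12$. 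For $s=\tfrac12$ the matrix in Step 1 is the all-ones matrix, trivially positive semidefinite — equivalently this is the KMS case, where $\cM_{m_{1/2}}^{-1}(A)=\sigma^{-1/2}A\sigma^{-1/2}$ directly.

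\textbf{Step 3 (clean packaging and conclusion).} Inserting $x=\log(\lambda_j/\lambda_k)$ into the representation of Step 2 and using $\sigma^{-1/2+i\omega}E_{(j,k)}\sigma^{-1/2-i\omega}=\lambda_j^{-1/2+i\omega}\lambda_k^{-1/2-i\omega}E_{(j,k)}$, one gets, by linearity over the basis $\{E_\alpha\}$, for $s\neq\tfrac12$,
\[
\cM_{m_s}^{-1}(A)=\frac{1}{|2s-1|}\int_{\R}\frac{1}{\cosh\!\big(\pi\omega/|2s-1|\big)}\,\sigma^{-1/2+i\omega}A\,\sigma^{-1/2-i\omega}\,d\omega ,
\]
and $\cM_{m_{1/2}}^{-1}(A)=\sigma^{-1/2}A\sigma^{-1/2}$. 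Since $\sigma^{i\omega}$ is unitary, $\big(\sigma^{-1/2-i\omega}\big)^\ast=\sigma^{-1/2+i\omega}$, so each map $A\mapsto\sigma^{-1/2+i\omega}A\,\sigma^{-1/2-i\omega}$ is completely positive (a single-Kraus-operator map); the scalar weight is positive and the integral converges absolutely (exponential decay of the weight, together with the uniform bound $\|\sigma^{-1/2+i\omega}A\sigma^{-1/2-i\omega}\|\le\|\sigma^{-1/2}\|^2\|A\|$). Therefore $\cM_{m_s}^{-1}$ is a norm-convergent superposition, against a positive measure, of completely positive maps, hence completely positive.

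\textbf{Main obstacle.} Everything past the recognition of the positive-definiteness of $1/\cosh$ (equivalently, the production of the $\sigma^{-1/2+i\omega}(\cdot)\sigma^{-1/2-i\omega}$ integral formula) is routine bookkeeping on the matrix-unit basis via Lemmas~\ref{CPchar} and~\ref{charMM}; the only genuine content is the Fourier identity for $\operatorname{sech}$, so the expected difficulty is in identifying and citing (or re-deriving by contour integration) that classical fact, and in checking the elementary convergence of the resulting operator integral.
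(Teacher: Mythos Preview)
Your proof is correct, but the paper takes a more elementary route. You both start with the same reduction via Lemmas~\ref{CPchar} and~\ref{charMM} to the positive semidefiniteness of the $N\times N$ matrix $\big[(\lambda_i,\lambda_j)_{m_s}^{-1}\big]$; the divergence is in how that matrix is shown to be PSD. The paper factors the entries as $\lambda_i^{-s}\lambda_j^{-s}\cdot\frac{2}{\kappa_i+\kappa_j}$ with $\kappa_j=\lambda_j^{1-2s}>0$, and then uses the one-line identity $\frac{1}{\kappa_i+\kappa_j}=\int_0^1 t^{\kappa_i+\kappa_j-1}\,dt$ to write the quadratic form as $2\int_0^1\big|\sum_i z_i\lambda_i^{-s}t^{\kappa_i-1/2}\big|^2\,dt\ge 0$. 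This is just the classical Cauchy-matrix trick---no Fourier analysis, no case split at $s=\tfrac12$.

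Your argument trades elementarity for an extra dividend: the Fourier representation of $\operatorname{sech}$ gives you the explicit formula
\[
\cM_{m_s}^{-1}(A)=\frac{1}{|2s-1|}\int_{\R}\frac{1}{\cosh\!\big(\pi\omega/|2s-1|\big)}\,\sigma^{-1/2+i\omega}A\,\sigma^{-1/2-i\omega}\,d\omega
\]
(for $s\neq\tfrac12$), a clean analog of the BKM formula \eqref{Minv}, which the paper does not obtain. So the paper's proof is shorter and uses less machinery, while yours produces a concrete Kraus-type integral representation that could be useful elsewhere.
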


\begin{proof} By Lemma~\ref{charMM}, for any $m$, for the standard matrix unit basis $\{E_\alpha\}$,
$$
(c_{\mathcal{M}^{-1}_m})_{\alpha,\beta} = (\alpha_1,\beta_1)^{-1}_m\delta_{\alpha_1,\alpha_2}\delta_{\beta_1,\beta_2}\ .
$$
If we order the indices as usual so that $(1,1),\dots,(N,N)$ come first, $C_{\cM_m}$ has the $N\times N$ matrix 
$\Lambda^{(m)}$ defined by 
\begin{equation}\label{lammdef}
\Lambda^{(m)}_{i,j} :=  \frac{1}{(\lambda_i,\lambda_j)_{m}}
\end{equation}
in its upper left block, and is zero elsewhere.  By Lemma~\ref{CPchar} $\cM_m^{-1}$ is CP if and only if  $\Lambda^{(m)}$ is positive semi-definite. Specializing to the case $m = m_s$ for some $s$,
 $$
 \Lambda^{(m_s)}_{i,j} =   \lambda_i^{-s} \lambda_j^{-s}  \frac{2}{\lambda_i^{1-2s} +  \lambda_j^{1-2s}} \ .
 $$

Define $\kappa_j = \lambda_j^{1-2s}$.   Then
$$
\frac{1}{\lambda_i^{1-2s} +  \lambda_j^{1-2s}} = \int_0^1  t^{\kappa_i+\kappa_j -1}{\rm d}t\ ,
$$
and hence for any $(z_1,\dots,z_n)\in \C^n$, 
$$\sum_{i,j=1}^n z_i^* \Lambda^{(m_s)}_{i,j} z_j = 2\int_0^1 \left| \sum_{i=1}^n \lambda_i^{-s} t^{\kappa_i-\frac12} z_i\right|^2 {\rm d}t \geq 0\ .$$
proving that $\Lambda^{(m_s)}$ is positive semidefinite.
\end{proof}

\section{$QMS_m$ for $N=2$,  $m$ even}

The following lemma will facilitate the computations in this section:

\begin{lm}\label{SACON} 
Let $\Phi$ be self-adjoint on $\H_m$. Then, with respect to any matrix unit orthonormal basis of $\mathfrak{H}$, the characteristic matrix $C_\Phi$ of $\Phi$ satisfies
\begin{equation}\label{case1}
{\rm For}\  \alpha = \alpha',\beta = \beta'\ ,\quad (c_\Phi)_{\alpha,\beta} = (c_\Phi)_{\beta,\alpha}\ ,
\end{equation}
\begin{equation}\label{case2}
{\rm For}\  \alpha = \alpha',\beta \neq \beta'\ ,\quad (c_\Phi)_{\alpha,\beta}(\lambda_{\alpha_2},\lambda_{\beta_2})_m = (c_\Phi)_{\beta',\alpha}(\lambda_{\alpha_2},\lambda_{\beta_1})_m\ ,
\end{equation}
\begin{equation}\label{case3}
{\rm For}\ \alpha \neq \alpha'\ ,\ \alpha =  \beta'\ ,\quad (c_\Phi)_{\alpha,\beta}(\lambda_{\alpha_2},\lambda_{\alpha_1})_m = (c_\Phi)_{\alpha,\beta}(\lambda_{\alpha_1},\lambda_{\alpha_2})_m\ ,
\end{equation}
\begin{equation}\label{case4}
{\rm For}\ \alpha \neq \alpha'\ ,\ \alpha =  \beta\ ,\quad (c_\Phi)_{\alpha,\beta}\lambda_{\alpha_2} = (c_\Phi)_{\alpha,\beta}\lambda_{\alpha_1}\ ,
\end{equation}
\end{lm}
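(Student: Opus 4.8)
The plan is to deduce all four identities directly from Theorem~\ref{main1}. Its conclusion \eqref{mb3}, written out in a matrix unit basis associated to $\sigma$ and cleared of denominators, says that a Hermitian $\Phi$ is self-adjoint on $\H_m$ precisely when
\begin{equation*}
(c_\Phi)_{\alpha,\beta}\,(\lambda_{\alpha_2},\lambda_{\beta_2})_m \;=\; (c_\Phi)_{\beta',\alpha'}\,(\lambda_{\alpha_1},\lambda_{\beta_1})_m \qquad\text{for all }\alpha,\beta\in\cJ_N,
\end{equation*}
which is just \eqref{mb7} rephrased through the definition of $B_{\Phi,m}$ and the Hermiticity of $C_\Phi$ (Lemma~\ref{Hchar}). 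Lemma~\ref{SACON} is then obtained by substituting into this master relation the four configurations of $(\alpha,\beta)$ relevant to the $N=2$ discussion, using repeatedly the elementary identity $(x,x)_m = x$ from \eqref{avg_x_x} and the fact that $\alpha\mapsto\alpha'$ is an involution.

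Concretely, I would run through the cases as one-line substitutions. For \eqref{case1}, $\alpha=\alpha'$ and $\beta=\beta'$ mean $\alpha_1=\alpha_2$ and $\beta_1=\beta_2$, so $(\lambda_{\alpha_1},\lambda_{\beta_1})_m=(\lambda_{\alpha_2},\lambda_{\beta_2})_m$ and $\beta'=\beta$, $\alpha'=\alpha$, whence the master relation collapses to $(c_\Phi)_{\alpha,\beta}=(c_\Phi)_{\beta,\alpha}$. For \eqref{case2}, $\alpha=\alpha'$ again gives $\alpha_1=\alpha_2$, hence $(\lambda_{\alpha_1},\lambda_{\beta_1})_m=(\lambda_{\alpha_2},\lambda_{\beta_1})_m$, while $\alpha'=\alpha$ turns $(c_\Phi)_{\beta',\alpha'}$ into $(c_\Phi)_{\beta',\alpha}$, yielding the stated relation. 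For \eqref{case3}, $\alpha\neq\alpha'$ and $\alpha=\beta'$ force $\beta=\alpha'$, so $\beta_1=\alpha_2$, $\beta_2=\alpha_1$ and $(\beta',\alpha')=(\alpha,\beta)$; the master relation becomes $(c_\Phi)_{\alpha,\beta}(\lambda_{\alpha_2},\lambda_{\alpha_1})_m = (c_\Phi)_{\alpha,\beta}(\lambda_{\alpha_1},\lambda_{\alpha_2})_m$. For \eqref{case4}, setting $\beta=\alpha$ in the master relation and invoking $(\lambda_{\alpha_2},\lambda_{\alpha_2})_m=\lambda_{\alpha_2}$ and $(\lambda_{\alpha_1},\lambda_{\alpha_1})_m=\lambda_{\alpha_1}$ gives the claim (noting $\beta'=\alpha'$).

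I do not expect a genuine obstacle: the lemma is an entirely mechanical specialization of Theorem~\ref{main1}. The only thing requiring attention is bookkeeping — keeping straight which slot of the $m$-mean carries the index $\alpha_1$ and which carries $\alpha_2$ under the prime operation — together with the trivial but load-bearing identity $(x,x)_m=x$, which is exactly what makes \eqref{case3} and \eqref{case4} degenerate into constraints relating a coefficient to a scalar multiple of itself (so that, e.g., the relevant coefficients are forced to vanish when $m$ is not even, or when the modular spectrum is non-degenerate). That degeneracy is precisely why the lemma is tailored to, and useful for, the subsequent $N=2$, even-$m$ computation.
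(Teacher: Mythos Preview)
Your proposal is correct and matches the paper's approach exactly: the paper's proof consists of the single sentence ``This is an immediate consequence of Theorem~\ref{main1},'' and you have simply spelled out the four substitutions into \eqref{mb3} that make this immediate. Your case-by-case bookkeeping is accurate; note only that in \eqref{case4} the master relation actually yields $(c_\Phi)_{\alpha,\alpha}\lambda_{\alpha_2}=(c_\Phi)_{\alpha',\alpha'}\lambda_{\alpha_1}$ (as confirmed by its application in Section~8), so the second $(c_\Phi)_{\alpha,\beta}$ in the displayed statement is evidently a typographical slip for $(c_\Phi)_{\beta',\alpha'}$.
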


\begin{proof} This is an immediate consequence of Theorem~\ref{main1}.
\end{proof}

In this section we determine the structure of $QMS_m$ for even $m$ when $N=2$. 
First consider any Hermitian $\cL$ on $\cM_2(\C)$  such that $\cL(\one) = 0$.  
Then the fact that 
$C_\cL$ is self-adjoint, together with Lemma~\ref{pairlem} constrain $C_\cL$ to have the form
  \begin{equation}\label{firstred}
  C_\cL = \left[\begin{array}{cccc} 
 -a &  \phantom{-}\zeta_1 & z & \zeta_2\\
 \phantom{-}\overline{\zeta_1} & -b &\zeta_4 & -\overline{z}\\
 \phantom{-}\overline{z} & \phantom{-}\overline{\zeta_4} & b &\zeta_3\\
 \phantom{-}\overline{\zeta_2} & -z &\overline{\zeta_3} & a\end{array}\right]\ ,
  \end{equation}
where $a,b$ are real and $z$ is complex and the relations among entries involving them are determined by Lemma~\ref{pairlem} and self-adjointness, 
and where $\zeta_1,\dots,\zeta_4$ are complex, and the relations among entries involving them are constrained  only by self-adjointness.

We next apply Lemma~\ref{SACON}. 
By \eqref{case1}, $\zeta_1 = \zeta_1^* =: -x$, and hence the upper-left block is real and symmetric for all choices of $m$. By \eqref{case3}, since $(\lambda_1,\lambda_2)_m = (\lambda_2,\lambda_1)_m$ there is no restriction on $\zeta_3$. 
 
 Next we apply \eqref{case2}.
 Taking $\alpha= (1,1)$ and $\beta = (1,2)$, and then  $\alpha= (2,2)$ and $\beta = (2,1)$, we see that  
 $$z  = \frac{\lambda_1}{(\lambda_1,\lambda_2)_m}\overline{\zeta_2} \quad{\rm and}\quad   
-z  = \frac{ \lambda_2 } { (\lambda_2,\lambda_1)_m }\zeta_4\ .$$

Next we apply \eqref{case4}. Taking $\alpha= (1,2)$ and $\beta = (1,2)$
$$ b =   \frac{\lambda_1}{\lambda_2} a\ .$$

We conclude that if we set $\mu_j := (\lambda_1,\lambda_2)_m/\lambda_j$, $j=1,2$, and replace $a$ by $\lambda_2 a$, 

 \begin{equation}\label{firstred2}
  C_\cL = \left[\begin{array}{cccc} 
 -a &  -x & z & \mu_1 \overline{z}\\
 -x & -b &-\mu_2 z & -\overline{z}\\
 \phantom{-}\overline{z} & -\mu_2 \overline{z} & b &\zeta_3\\
 \phantom{-}\mu_1 z & -z &\overline{\zeta_3} & a\end{array}\right]\ ,
  \end{equation}

  Conjugating with the unitary $U$ given in \eqref{passtounit2}, and replacing $z$ by $\sqrt{2}z$, yields $\widetilde{C}_\cL$, the characteristic matrix for the associated unital basis:
 \begin{equation}\label{firstred22}
 \widetilde{C}_\cL  =  \left[\begin{array}{cccc} 
 -x- \frac{a}{2} & (\lambda_1-\lambda_2)\frac{a}{2} & (1-\mu_2)z & (\mu_1-1)\overline{z}\\
(\lambda_1-\lambda_2)\frac{a}{2}  & x- \frac{a}{2} & (1+\mu_2)z & (1+\mu_1)\overline{z}\\
(1-\mu_2)\overline{z} &  (1+\mu_2)\overline{z} & \lambda_1 a & \zeta_3\\
(\mu_1-1)z & (1+\mu_1)z & \overline{\zeta_3} & \lambda_2 a\end{array}\right]\ .
 \end{equation}

Defining $\nu_j = \mu_j+1$, $j=1,2$ and dropping the subscript on $\zeta_3$, we have the reduced characteristic matrix
 \begin{equation}\label{firstred222}
\widetilde{R}_\cL  = 
   \left[\begin{array}{ccc} 
 x-  \frac{a}{2} &\nu_2 z & \nu_1 \overline{z}\\
 \nu_2 \overline{z} & \lambda_1 a &\zeta\\
\nu_1 z &\overline{\zeta} & \lambda_2 a\end{array}\right]
 \end{equation}

To break the homogeneity, let us fix  $\tr[R_{\cL}] =1$, which means $x+ \frac{a}{2}=1$.  For positivity, we must have $x\geq \frac{a}{2} >0$, and hence $0 \leq 1\leq a$. 
  We now determine the extreme points in the set of $(a,z,\zeta)$ for which $\widetilde{R}_\cL$ is positive semidefinite. 

If $a=0$, then necessarily $z=\zeta  =0$, so $(a,z,\zeta) = (0,0,0)$ is extreme. If $a=1$, then $x-\frac{a}{2} =0$, and  necessarily $z=0$. 
Then extremality reduces to $|\zeta| = \sqrt{\lambda_1\lambda_2}$.   Thus,
$$(1,0, \sqrt{\lambda_1\lambda_2}e^{i\theta})$$
yields a one parameter family of extreme points.  

Now we turn to the cases in which $0 < a < 1$.  Suppose first that 
\begin{equation}\label{exx5}
|\zeta| =  a\sqrt{\lambda_1\lambda_2}\ .
\end{equation}
As with $a=1$, it is again the case that $z=0$ is necessary for positivity of $R_\cL$. To see this, define
$$
A :=  \left[ \begin{array}{cc} \lambda_1a & \zeta\\  \overline{\zeta}  & \lambda_2a\end{array}\right]
 \ ,\quad 
\vec v :=(z\nu_2,\overline{z}\nu_1)\ ,\quad 
 \vec{\eta} := (\eta_1,\eta_2)\quad{\rm and}\quad 
\vec{Z} := (t,\eta_1,\eta_2)
$$
where $\eta_1,\eta_2\in \C$ and $t\in \R$.  Then, again with $x=1$, 
$$\langle \vec{Z}, R_{\cL} \vec{Z}\rangle = t^2(1-a) + 2t \Re(\langle \vec{v},\vec{\eta}\rangle) + \langle \vec{\eta},A\vec{\eta}\rangle\ .$$
Minimizing over $t$, we find $\langle \vec{\eta},B(z)\vec{\eta}\rangle$ where
$$
B(z) := \left[\begin{array}{cc}  
a\lambda_1 - \frac{|z|^2}{(1-a)}\nu_2^2 & \zeta -  \frac{\overline{z}^2}{(1-a)}\nu_1\nu_2 \\
\overline{\zeta} -  \frac{z^2}{(1-a)}\nu_1\nu_2 & a\lambda_2 -  \frac{|z|^2}{(1-a)}\nu_1^2
\end{array}\right]\ .
$$
When \eqref{exx5} is satisfied, $A$ has rank one, and $B(z)$ is the difference of two positive rank one matrices. Hence $B(z)$ cannot be positive definite unless the two rank one matrices are proportional.
Since the null space of $A$ is spanned by  $w := (-\sqrt{\lambda_2},\sqrt{\lambda_1}e^{i\theta})$ where $\zeta = |\zeta |e^{i\theta}$,  when $y\neq 0$, it is only possible for $B(z)$ to be positive in case 
$\langle v,w\rangle = 0$. However,  $|\langle v,w\rangle| \geq |z||\sqrt{\lambda_1}\nu_1-\sqrt{\lambda_2}\nu_2 |$, and
$$
|\sqrt{\lambda_1}\nu_1 - \sqrt{\lambda_2}\nu_2| = \left(\frac{(\lambda_1,\lambda_2)_m}{\sqrt{\lambda_1\lambda_2}} - 1\right)|\sqrt{\lambda_2} - \sqrt{\lambda_1}|\ .
$$
Under the assumption $\lambda_1\neq \lambda_2$, this is non-zero unless $m$ is the point mass at $1/2$; i.e., in the KMS case.

Thus,  when \eqref{exx5} is satisfied, except in the KMS case, $z=0$ is necessary and sufficient for the reduced characteristic matrix to be positive semidefinite. 
We set aside the KMS case since we already have a complete description of it for general $N$. 
 Then
\begin{equation}\label{extremep1}
(0,0,0) \qquad{\rm and}\quad (1,0, \sqrt{\lambda_1\lambda_2}e^{i\theta}) \ , \quad 0 \leq \theta < 2\pi\ ,
\end{equation}
are all of the extreme points where \eqref{exx5} is satisfied. Notice that they are exactly the ones that are evenly symmetric.

Now consider pairs $(a,\zeta)$ such that $|\zeta| < a\sqrt{\lambda_1\lambda_2}$. 
Then $B(0) > 0$.  Writing $z =: |z|e^{i\varphi}$ and holding $\phi$ fixed,  there is $r_0>0$ such that for  $|z| \leq r_0$, $B(|z|e^{i\phi}) \geq 0$, but for $|z| > r_0$, $B(z)$ has a negative eigenvalue. It follows that a necessary condition for extremality is that $\det(B(|z|e^{i\phi}) ) =0$.
Computing $\det(B(|z|e^{i\phi}))=0$ yields
\begin{equation}\label{exx6}
r_0^2 = \frac{ (1-a)(a^2\lambda_1\lambda_2   -|\zeta|^2) }{a(\lambda_1\nu_1^2+\lambda_2\nu_2^2)  -2\nu_1\nu_2\Re (\zeta e^{i2\phi}) } \ .
\end{equation}
Note that the denominator in this expression is strictly positive since
$|\Re (\zeta e^{i2\phi}) | \leq a \sqrt{\lambda_1\lambda_2}$ and hence
\begin{eqnarray*}\left(a(\nu_2^2\lambda_2 + \nu_1^2\lambda_1) - \nu_1\nu_22\Re (\zeta e^{i2\phi}) \right) &\geq&  \left(a(\nu_2^2\lambda_2 + \nu_1^2\lambda_1) -2 \nu_1\nu_2a\sqrt{\lambda_1\lambda_2} \right)\\
& =&a(\nu_1\sqrt{\lambda_1} - \nu_2\sqrt{\lambda_2})^2 > 0\ .
\end{eqnarray*}

Thus the remaining extreme points are those given by 
\begin{equation}\label{extremep1}
(a, r_0e^{i\phi} , r\sqrt{\lambda_1\lambda_2}e^{i\theta}) \qquad 0< a <1\ , \quad 0 \leq r < a\ ,\quad 0 \leq \theta,\varphi < 2\pi\ ,
\end{equation}
with $r_0$ given by \eqref{exx6}. 

To write $\cL$ in the canonical form $\cL(A) = G^*A + AG + \Phi(A)$, we easily read off from \eqref{firstred22}:
$$
G  = \left[\begin{array}{cc} \tfrac{a}{4}(3\lambda_1 -\lambda_2) - \tfrac{x}{2}  &  \tfrac{z}{\sqrt{2}}(\mu_1-\mu_2)\\  \tfrac{\overline{z}}{\sqrt{2}}(\mu_1-\mu_2)  &  \tfrac{a}{4}(3\lambda_2 -\lambda_1) - \tfrac{x}{2} \end{array}\right]  +
\frac{1}{\sqrt{2}} (2 - \mu_1 - \mu_2) \left[\begin{array}{cc} \phantom{-}0 & z \\ -\overline{z} & 0\end{array}\right]\ .
$$
Thus writing $G = H+iK$, $H$ and $K$ self-adjoint, we have that 
$$
K := \frac{1}{\sqrt{2}i} (2 - \mu_1 - \mu_2) \left[\begin{array}{cc} \phantom{-}0 & z \\ -\overline{z} & 0\end{array}\right]
$$
and by \eqref{firstred2},
$$
z = \langle u_1, \cL(|u_1\rangle \langle u_1|)u_2\rangle\ .
$$
It is easy to check that in the KMS case, this formula for $K$ coincides with that given by Theorem~\ref{Main7A}.  To write $\Phi$ in Kraus form amounts to diagonalizing the $3 \times 3$ matrix $R_\cL$ given by \eqref{firstred222}. Evidently this can be done in closed form, but the resulting formulas are complicated and shed little light on matters.

\section{Appendix}

In this appendix we recall some ideas of Arveson that were originally developed in the context of minimal Stinespring representation, but which have, in our finite dimensional setting, a very simple expression in terms of minimal Kraus representations.  This complements previous work by Choi \cite{Choi75}. The required background on Stinespring representations can be found in the initial chapters of \cite{Paul02}.

Let $\Phi$ be a completely positive map on the algebra $\cA = {\mathcal M}_N(\C)$, and let
 ${\displaystyle 
 \Phi(A) = \sum_{j=1}^M V_j^* A V_j}$
 be a Kraus representation of it.   The same data can be cast as a Stinespring representation of $\Phi$, which was actually Kraus' starting point.
  Let $\mathfrak{H}_{N,M}$ denote the Hilbert space consisting of all $N\times M$ matrices $X$ equipped with the Hilbert-Schmidt inner product.  Define a representation of ${\mathcal M}_N(\C)$ on $\mathfrak{H}_{N,M}$ by
 \begin{equation}\label{pidef}
 \pi(A) X = AX\ .
 \end{equation}
 Define a linear transformation ${\mathcal V}: \C^N \to \mathfrak{H}_{N,M}$ by 
 \begin{equation}\label{Vdef}
 {\mathcal V} x= [V_1x,\dots,V_Mx]\ .
 \end{equation}
 where $[x_1,\dotsm x_M]$ denotes the $N\times M$ matrix whose $j$th column is $x_j$. Then for $X = [x_1,\dotsm , x_M]\in \mathfrak{H}_{N,M}$,
 ${\displaystyle \langle X,{\mathcal V} x\rangle_{\mathfrak{H}_{N,M}} = \tr[[x_1,\dotsm  , x_M]^*[V_1x,\dots,V_Mx]] = \sum_{j=1}^M \langle V_j^*x_j,x\rangle}$,
 from which it follows that
 \begin{equation}\label{stine1}
 {\mathcal V}^* X =  \sum_{j=1}^M V_j^* x_j\ .
 \end{equation}
 
 Thus we have the Stinespring representation 
  \begin{equation}\label{stine1}
  \Phi(A) = {\mathcal V}^* \pi(A){\mathcal V}\ ,
   \end{equation} 
   in terms of a map $\mathcal{V}$ from $\C^N$ into some other Hilbert space, and a representation of ${\mathcal M}_N(\C)$ on that Hilbert space. Stinespring's Theorem says that every CP map is of this form. 
  
  The  Stinespring representation \eqref{stine1} is {\em minimal} in case  the closed span of 
  $$\{ \pi(A){\mathcal V} x\ :\ A\in \cA\ ,\ x\in \C^N\}$$
  is all of $\mathfrak{H}_{N,M}$, the closure being irrelevant in this finite dimensional case. 
  
  \begin{lm} The Stinespring representation \eqref{stine1} specified by \eqref{Vdef} is minimal if and only if $\{V_1,\dots,V_M\}$ is linearly independent. 
  \end{lm}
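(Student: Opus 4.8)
The plan is to test minimality against the orthogonal complement of the proposed spanning set, and then reduce the resulting condition to the (in)dependence of $\{V_1,\dots,V_M\}$. First I would record that for $A\in\cM_N(\C)$ and $x\in\C^N$ one has $\pi(A)\mathcal{V}x = A[V_1x,\dots,V_Mx] = [AV_1x,\dots,AV_Mx]$, so the Stinespring representation is minimal precisely when the set $\{[AV_1x,\dots,AV_Mx] : A\in\cM_N(\C),\ x\in\C^N\}$ spans $\mathfrak{H}_{N,M}$, equivalently when its orthogonal complement in $\mathfrak{H}_{N,M}$ is $\{0\}$.

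Second, I would compute that complement explicitly. For $Y=[y_1,\dots,y_M]\in\mathfrak{H}_{N,M}$ with columns $y_j\in\C^N$, a direct calculation using the Hilbert--Schmidt inner product gives
\[
\langle Y,\ \pi(A)\mathcal{V}x\rangle_{\mathfrak{H}_{N,M}} \;=\; \sum_{j=1}^M y_j^* A V_j x \;=\; \Bigl(\sum_{j=1}^M y_j^* A V_j\Bigr)x ,
\]
so $Y$ is orthogonal to the whole spanning set if and only if $\sum_{j=1}^M y_j^* A V_j = 0$ for every $A\in\cM_N(\C)$. Since this is linear in $A$, it suffices to test rank-one matrices $A = |u\rangle\langle v|$; doing so and letting $v$ range over $\C^N$ shows that the condition is equivalent to
\[
\sum_{j=1}^M (y_j^* u)\, V_j \;=\; 0 \qquad\text{for all } u\in\C^N .
\]

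Third, both implications drop out. If $\{V_1,\dots,V_M\}$ is linearly independent, the last display forces $y_j^* u = 0$ for every $j$ and every $u$, hence $y_j = 0$ for all $j$, i.e.\ $Y = 0$; the complement is trivial, so the representation is minimal. Conversely, if $\sum_j c_j V_j = 0$ for some scalars not all zero, fix any nonzero $w\in\C^N$ and set $y_j := \overline{c_j}\,w$; then $\sum_j (y_j^* u) V_j = (w^* u)\sum_j c_j V_j = 0$ for all $u$, while $Y=[y_1,\dots,y_M]\neq 0$, so the complement is nonzero and the representation is not minimal. I do not expect a real obstacle here; the only step needing a moment's care is the reduction of ``$\sum_j y_j^* A V_j = 0$ for all $A$'' to ``$\sum_j (y_j^* u) V_j = 0$ for all $u$'', which is precisely where one tests against rank-one matrices.
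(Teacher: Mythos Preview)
Your proof is correct. Both you and the paper test minimality by characterizing the orthogonal complement of $\{\pi(A)\mathcal{V}x\}$, and the necessity direction (linear dependence produces a nonzero element of the complement) is essentially identical in the two arguments. The difference lies in the sufficiency direction: the paper, starting from a nonzero $W$ with $\mathcal{V}^*A^*W=0$ for all $A$, invokes a $QR$ factorization of $W$ and constructs a particular matrix $B$ that collapses the columns of $Q$ onto a single vector, thereby extracting a nontrivial linear relation among the $V_j^*$. Your route is shorter and more transparent: testing against rank-one $A=|u\rangle\langle v|$ immediately reduces the orthogonality condition to $\sum_j (y_j^*u)V_j=0$ for all $u$, from which linear independence forces $Y=0$ at once. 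The paper's argument is perhaps more in the spirit of working with the operator $\mathcal{V}^*$ directly, but your rank-one test buys a cleaner and entirely elementary proof with no auxiliary decompositions.
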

  
 \begin{proof}
  Suppose that $W = [w_1,\dots,w_M]$ is non-zero and orthogonal to $\pi(A) {\mathcal V} x$ for all $A\in \cA$ and all $x\in \C^N$. Then
  $0 = \langle W,A{\mathcal V} x\rangle_{\mathfrak{H}_{N,M} } = \langle {\mathcal V}^*A^* W, x\rangle$, and hence the orthogonality is equivalent to the condition  ${\mathcal V}^*A^* W = 0$
 for all $A\in \cA$. 
 
 Suppose that $\{V_1,\dots, V_M\}$ is not linearly independent. Then there is a non-zero vector $v$ such that $\sum_{j=1}^M v_j V_j^* = 0$.  Let $x\in \C^N$ be arbitrary, and define $W = [v_1 x, \dots, v_M x]$. Then 
 $A^*W =  [v_1 A^*x, \dots, v_M A^*x]$ and 
 ${\displaystyle {\mathcal V}^*A^*W = \left(\sum_{j=1}^M v_j V_j^*\right)A^*x = 0}$.
 Thus, there exists a non-zero $N\times M$ matrix $W$ such that $W$ is orthogonal to $\pi(A){\mathcal V} x$ for all $A\in \cA$ and all $x\in \C^N$,
 which proves the necessity of the condition.

 Suppose $W$ is a non-zero  matrix such that  ${\mathcal V}^*A^*W = 0$ for all $A\in \cA$.  Let $r$ be the rank of $W$; evidently $0 < r \leq \min\{M,N\}$. Let $W = QR$ be a $QR$ factorization of $W$ so that $Q$ is an $N\times r$ matrix with orthonormal columns and $R$ is an $r\times M$  with linearly independent rows. 
  Write $Q = [q_1,\dots,q_r]$, and extend $\{q_1,\dots,q_r\}$  to an orthonormal basis $\{q_1,\dots,q_N\}$ of $\C^N$ if $r < N$. 
  Define an $N\times N$ matrix $B$ by $Bq_j = q_1$ for $j \leq r$, and, in case $r< N$, $Bq_j = 0$ for $j>r$. 
 Then $BQR = [q_1,\dots \ , q_1] R$; i.e., 
 ${\displaystyle(BQR)_{i,j} = (q_1)_i\sum_{k=1}^r  R_{k,j}}$.
 Since the rows of $R$ are linearly independent, the vector $v\in \C^M$ with $v_j = \sum_{k=1}^r  R_{k,j}$ is not zero. Hence   for any $w\in \C^N$, taking $A^* = UB$ where $U$ is an appropriate chosen unitary, we can arrange that  
 $A^*W = [v_1 w,\dots, v_M w]$, and then
 $$0 = {\mathcal V}^* A^*W = \left(\sum_{j=1}^M v_j  V^*_j \right)w\ ,$$
 and since $w$ is arbitrary, this implies that $\sum_{j=1}^M v_j  V^*_j = 0$.  This is impossible since $\{V_1,\dots,V_m\}$ are linearly independent, and hence $W$ must be zero. This proves the sufficiency of the condition. 
 \end{proof}
 
 The next theorems involve maps of the form
 \begin{equation}\label{Bform}
 \Psi(A) = \sum_{i,j=1}^M B_{i,j} V_i^* A V_j
 \end{equation}
 where $\{V_1,\dots,V_M\}\subset \cA$, and $B$ is an $M\times M$ matrix. Suppose that  $B$ is positive so that $B = S^*S$. Then writing $B_{i,j} = \sum_{k=1}^M S^*_{i,k}S_{k,j} =  \sum_{k=1}^M \overline{S_{k,i}}S_{k,j}$,  
 $$\Psi(A) = \sum_{k=1}^M W^*_k A W_k\quad{\rm where}\quad  W_k = \sum_{j=1}^M S_{k,j}V_j\ .
 $$
 It follows that whenever $B\geq 0$, $\Psi$ is CP, and this is true without any hypotheses on $\{V_1,\dots,V_M\}$.

 Now suppose that  $\{V_1,\dots,V_M\}$ is linearly independent. The Gram-Schmidt procedure yields an orthonomal set $\{E_1,\dots,E_M\}$ and an invertible lower triangular matrix $L$ such that  $V_i = \sum_{j=1}^M L_{i,j}E_j$.  
 Then 
 $$
\Psi(A) =  \sum_{i,j=1}^M B_{i,j} V_i^* A V_j  =  \sum_{i,j,k,\ell=1}^M B_{i,j} \overline{L_{i,k}}E_k^* A L_{j,\ell}E_\ell = \sum_{k,\ell=1}^M (L^*BL)_{i,j} E_k^* A E_\ell\ .
 $$
 Since $\{E_1,\dots,E_M\}$ is orthonormal, $\Psi$ is CP if and only if $L^*BL$ is positive semidefinite. But since $L$ is invertible, this is the case if and only if $B$ is positive semidefinite. Moreover, we see that there is at most one  matrix $B$ for which $\Psi$ can be written in the form \eqref{Bform} since $L^*BL$ is the characteristic matrix of $\Psi$ for an orthonormal basis determined by $\{V_1,\dots,V_M\}$. 
 This proves:
 
 \begin{lm}\label{cpchar2}
 Let $\Psi$ be a map defined by \eqref{Bform} for some set $\{V_1,\dots,V_M\}\subset {\mathcal M}_N(\C)$ and some $M\times M$ matrix $B$. Then if $B$ is positive semi-definite, $\Psi$ is CP, and if $\{V_1,\dots,V_M\}$ is linearly independent, $\Psi$ is CP if and only if $B$ is positive semi-definite, and in this case the corrrespondence between $\Psi$ and $B$ is one-to-one. 
 \end{lm}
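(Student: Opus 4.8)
The plan is to establish the two halves of the statement separately, in both cases reducing matters to the characteristic-matrix criterion for complete positivity recorded in Lemma~\ref{CPchar}.

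First I would show that $B \ge 0$ alone forces $\Psi$ to be CP, with no hypothesis on $\{V_1,\dots,V_M\}$. Factor $B = S^*S$ for some $M\times M$ matrix $S$ via the spectral theorem, write $B_{i,j} = \sum_{k=1}^M \overline{S_{k,i}}S_{k,j}$, substitute into \eqref{Bform}, and regroup the quadruple sum to obtain $\Psi(A) = \sum_{k=1}^M W_k^* A W_k$ with $W_k := \sum_{j=1}^M S_{k,j}V_j$. This is a Kraus representation of $\Psi$, so $\Psi$ is CP; note that linear independence of the $V_j$ plays no role here.

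Next, assuming $\{V_1,\dots,V_M\}$ is linearly independent, I would run Gram--Schmidt to write $V_i = \sum_{j=1}^M L_{i,j}E_j$, where $\{E_1,\dots,E_M\}$ is orthonormal in $\fH$ and $L$ is an invertible lower-triangular $M\times M$ matrix. Substituting into \eqref{Bform} and collecting terms gives
\[
\Psi(A) = \sum_{k,\ell=1}^M (L^*BL)_{k,\ell}\,E_k^* A E_\ell .
\]
Since linear independence forces $M \le N^2 = \dim \fH$, the set $\{E_1,\dots,E_M\}$ extends to an orthonormal basis $\{F_\alpha\}_{\alpha\in\cJ_N}$ of $\fH$; with respect to it, the characteristic matrix of $\Psi$ in the sense of \eqref{GKS2} is $L^*BL$ sitting in a principal $M\times M$ block and zeros elsewhere. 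By Lemma~\ref{CPchar}, $\Psi$ is CP iff $L^*BL \ge 0$, and because $L$ is invertible this is equivalent to $B \ge 0$. The same identity shows $L^*BL$ is a block of the (uniquely determined) characteristic matrix of $\Psi$, so $B = (L^{-1})^*(L^*BL)L^{-1}$ is recovered from $\Psi$; hence the correspondence $B \leftrightarrow \Psi$ is one-to-one on maps of the form \eqref{Bform} with these fixed $V_j$.

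I do not expect a genuine obstacle. The only points requiring care are the index bookkeeping in the two reshuffles and the observation that linear independence of $\{V_1,\dots,V_M\}$ in $\cM_N(\C)$ guarantees $M \le N^2$, which is exactly what licenses extending $\{E_1,\dots,E_M\}$ to a full orthonormal basis and then invoking Lemma~\ref{CPchar} together with the uniqueness of the coefficients in \eqref{GKS2}.
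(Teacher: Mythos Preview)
Your proposal is correct and follows essentially the same approach as the paper: factor $B=S^*S$ to get a Kraus representation when $B\ge 0$, then apply Gram--Schmidt to the linearly independent $V_j$ to reduce to an orthonormal system $\{E_k\}$ so that the characteristic matrix of $\Psi$ is (a block containing) $L^*BL$, and invoke Lemma~\ref{CPchar} and the invertibility of $L$. If anything, your version is slightly more explicit in noting $M\le N^2$ and extending $\{E_1,\dots,E_M\}$ to a full basis before applying Lemma~\ref{CPchar}.
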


 Now let $\Phi$ and $\Psi$ be two CP maps. Then $\Phi - \Psi$ is CP if and only if $C_\Phi - C_\Psi \geq 0$. Thus, the invertible transformation $\Phi \mapsto C_\Phi$ identifies the order structure on $CP(\cA)$ with the order structure on ${\mathcal M}_N(\C)^+$, the positive semidefinite elements of ${\mathcal M}_N(\C)$.  There is another characterization of this  order relation due to Arveson that has several advantages.
 
 \begin{thm}\label{arveRN}  Let $\Phi$ be a completely positive map given by a minimal Kraus representation $\Phi(A) = \sum_{j=1}^M V_j^* AV_j$.  Then a CP map $\Psi$ satisfies $\Phi \geq \Psi$ if and only if there is a uniquely determined  $M\times M$ matrix  $T$ such that $0 \leq T \leq \one$ and
 \begin{equation}\label{arv21}
 \Psi(A) = \sum_{i,j=1}^M T_{i,j} V_i^* A V_j\ . 
 \end{equation}
 Equivalently, in terms of the associated minimal Stinespring representation $\Phi(A) = {\mathcal V}^* \pi(A){\mathcal V}$,  there is a positive operator $\widetilde{T} \in \pi({\mathcal M}_N(\C))'$, the commutant of $\pi({\mathcal M}_N(\C))$, such that 
 \begin{equation}\label{arv22}
 \Psi(A) = {\mathcal V}^* \pi(A) \widetilde{T} {\mathcal V}\ . 
 \end{equation}
 \end{thm}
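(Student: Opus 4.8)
The plan is to prove the Kraus-form statement first and then translate it into the Stinespring language, where the operator $\widetilde T$ will merely encode the matrix $T$. For the easy direction, suppose $\Psi(A)=\sum_{i,j}T_{i,j}V_i^*AV_j$ with $0\le T\le\one$. Since $T\ge0$, Lemma~\ref{cpchar2} gives that $\Psi$ is CP; applying the same lemma to $\Phi(A)-\Psi(A)=\sum_{i,j}(\delta_{i,j}-T_{i,j})V_i^*AV_j$ and using $\one-T\ge0$ shows that $\Phi-\Psi$ is CP, i.e. $\Phi\ge\Psi$.

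For the hard direction, assume $\Phi\ge\Psi$. Apply Gram--Schmidt to the linearly independent set $\{V_1,\dots,V_M\}$ to obtain an orthonormal set $\{E_1,\dots,E_M\}$ in $\fH$ and an invertible lower triangular $M\times M$ matrix $L$ with $V_i=\sum_{j=1}^M L_{i,j}E_j$; since $M\le N^2$, extend to an orthonormal basis $\{E_\alpha\}_{\alpha\in\cJ_N}$ of $\fH$. Substituting gives $\Phi(A)=\sum_{k,\ell=1}^M (L^*L)_{k,\ell}E_k^*AE_\ell$, so by the uniqueness of the expansion \eqref{GKS1}--\eqref{GKS2} the characteristic matrix $C_\Phi$ of $\Phi$ with respect to $\{E_\alpha\}$ equals $L^*L$ on its upper-left $M\times M$ block and vanishes elsewhere. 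By Lemma~\ref{CPchar} we have $C_\Psi\ge0$ and $C_\Phi-C_\Psi\ge0$. Every diagonal entry of $C_\Phi$ indexed outside the upper-left block is $0$, so the corresponding diagonal entry of $C_\Psi$, lying in $[0,(C_\Phi)_{\alpha\alpha}]=\{0\}$, vanishes; positivity of $C_\Psi$ then forces the whole row and column of $C_\Psi$ through that index to vanish. Hence $C_\Psi$ equals some $M\times M$ matrix $R$ on the upper-left block and vanishes elsewhere, with $0\le R\le L^*L$. Reading $\Psi$ off from $C_\Psi$ and inverting via $E_j=\sum_i(L^{-1})_{j,i}V_i$ yields $\Psi(A)=\sum_{i,j}T_{i,j}V_i^*AV_j$ with $T:=(L^{-1})^*R\,L^{-1}$; then $R\ge0$ gives $T\ge0$ and $R\le L^*L$ gives $T\le (L^{-1})^*L^*L\,L^{-1}=\one$. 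For uniqueness, if $T$ and $T'$ both work, then $L^*(T-T')L$ is the characteristic matrix of the zero map and hence vanishes, so $T=T'$ because $L$ is invertible.

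For the Stinespring reformulation, note that under the identification $\mathfrak{H}_{N,M}\cong\C^N\otimes\C^M$ with $\pi(A)=A\otimes\one_M$, the commutant $\pi(\cM_N(\C))'$ consists precisely of the operators $R_B\colon X\mapsto XB$ for $B\in\cM_M(\C)$; since $\langle X,R_BX\rangle_{\mathfrak{H}_{N,M}}=\tr[(X^*X)B]$ and $X^*X$ runs over all positive semidefinite $M\times M$ matrices, $R_B\ge0$ iff $B\ge0$ and $R_B\le\one$ iff $B\le\one$. A direct computation from \eqref{Vdef} and \eqref{stine1} gives ${\mathcal V}^*\pi(A)R_B{\mathcal V}=\sum_{i,j}B_{j,i}V_i^*AV_j$. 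Thus, with $T$ as above, the positive operator $\widetilde T:=R_{T^{t}}\le\one$ in the commutant satisfies $\Psi(A)={\mathcal V}^*\pi(A)\widetilde T{\mathcal V}$ (here $T^t=\overline T\ge0$ since $T$ is positive semidefinite), and conversely every such $\widetilde T$ arises this way, which gives the claimed equivalence.

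I expect the only genuinely substantive step to be the ``support propagation'' in the hard direction: that $0\le C_\Psi\le C_\Phi$ together with the block structure of $C_\Phi$ forces $C_\Psi$ onto the same block. Everything else is bookkeeping — passing among Kraus data, characteristic matrices, and Stinespring data, and keeping track of the transpose in the commutant identification.
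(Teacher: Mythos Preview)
Your proof is correct and follows essentially the same route as the paper: the easy direction via Lemma~\ref{cpchar2}, the hard direction via Gram--Schmidt on $\{V_1,\dots,V_M\}$ and comparison of characteristic matrices, and the Stinespring reformulation through the identification of the commutant with right multiplications. Your treatment is in fact slightly more explicit than the paper's in two places --- you spell out the ``support propagation'' step (zero diagonal entries of $C_\Phi$ force the corresponding rows and columns of $C_\Psi$ to vanish) and you address the uniqueness of $T$ directly --- and your transpose bookkeeping in the Stinespring part ($\widetilde T = R_{T^{t}}$) is the correct one.
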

 
 \begin{remark} Arveson \cite[Theorem 1.4.2]{Arv69} proved the theorem in the second equivalent form, and discussed it as being a non-commutative Radon-Nikodym Theorem with  the Radon-Nikodym derivative being the element $\widetilde{T}$ of $\pi({\mathcal M}_N(\C))'$.

 \end{remark}
 
 \begin{proof}  Let $T$ be an $M\times M$ matrix, with $0 \leq T \leq \one$, and let $\Psi(A)$ be given by \eqref{arv21}.  Since $T\geq 0$, $\Psi$ is CP by Lemma~\ref{cpchar2}, and
 $$
 \Phi(A) - \Psi(A) = \sum_{i,j=1}^M (\delta_{i,j} -T_{i,j})V_i^* A V_j\ . 
 $$
 Since $\one - T \geq 0$,  by another application of Lemma~\ref{cpchar2}, $\Phi- \Psi $ is CP.

 Conversely, suppose that $\Phi - \Psi$ is CP. We again use the Gram-Schmidt procedure to produce  an orthonomal set $\{E_1,\dots,E_M\}$ and an invertible lower triangular matrix $T$ such that  $V_i = \sum_{j=1}^M L_{i,j}E_j$.  Then $\Phi(A) = \sum_{j=1}^M (L^*L)_{i,j}E_i^* A E_j$.
 
  If $M < N^2$, extend $\{E_1,\dots, E_M\}$ to an orthonormal basis of $\cM_N(\C)$ equipped with the Hilbert-Schmidt inner product. The characteristic matrix of $\Phi$ with respect to this basis,  $C_\Phi$,  has $L^*L$ as its upper-left $M\times M$ block, and all other entries are zero.   If $\Psi$ is a CP map such that $\Phi - \Psi$ is CP, then $C_\Phi - C_\Psi$ is positive semidefinite.  Hence for some $M\times M$ matrix $R$ with $0 \leq R \leq L^*L$,  
  $\Psi(A) = \sum_{i,j=1}^M R_{i,j} E_i^* A E_j$.  But then since  $E_j = \sum_{k=1}^M L^{-1}_{j,k}V_k$,
  $$
  \Psi(A) = \sum_{i,j=1}^M ((L^{-1})^*RL^{-1})_{i,j} E_i^* A E_j \ .
  $$
  Since $0 \leq R \leq L^*L$, $0 \leq (L^{-1})^*RL^{-1} \leq \one$.  Hence we may define $T := (L^{-1})^*RL^{-1}$ and we have the contraction.
   This completes the proof that a CP map $\Psi$ satisifes $\Psi \leq \Phi$ if and only if $\Psi$ has the form specified in \eqref{arv21}. 
 
 We now show that this is equivalent to $\Psi$ having the form specified in \eqref{arv22}.   For an $N\times M$ matrix $X$ and an $M\times M$ matrix $B$, define
 $$\pi'(B)X = XB^{\rm T}\ ,$$
where $B^{\rm T}$ denotes the transpose of $B$.  It is easy to check that $\pi'$ is a $*$-representation of $M_M(\C)$ on $\mathfrak{H}_{N,M}$. It is immediately clear that $\pi'(M_M(\C))$ lies in the commutant of $\pi({\mathcal M}_N(\C))$ since $\pi'$ acts by right multiplication, and $\pi$ acts by left multipication, and it is easy to check that in fact $(\pi(\cA))' =  \pi'(M_M(\C))$.

Now write $X\in \mathfrak{H}_{N,M}$ in the form $X = [x_1,\dots,x_M]$.  Then for any $M \times M$ matrix $R$
$$\sum_{j=1}^M \left[ R_{1,j}x_j, \dots, R_{M,j}x_j\right] = X R^{\rm T}\ .$$

Suppose now that  that $\Psi$ has the form specified in \eqref{arv21}. 
Then  for all $x,y\in \C^N$, 
 $$
 \langle x, \Psi (A)y\rangle = \sum_{i=1}^M \langle V_i x, A \sum_{j=1}^M T_{i,j} V_j y\rangle = \langle x, {\mathcal V}^* \pi(A) \pi'(T^{\rm T}) {\mathcal V} y\rangle\ .
 $$
 Defining $\widetilde{T} := \pi'(T^{\rm T})$, we obtain \eqref{arv22}.  
 
 Finally, assume that $\Psi$ has the form \eqref{arv22}.   Since $\pi(\cA)' = \pi'(M_M(\C))$ we can write $\widetilde{T}= \pi'(T^{\rm T})$ for some $M\times M$ matrix $T$ with $0 \leq T \leq 1$. Then
 $$
  \Psi(A) = {\mathcal V}^* \pi(A) \widetilde{T} {\mathcal V} =  \Psi(A) = {\mathcal V}^* \pi(A) \pi'(T^{\rm T})  {\mathcal V} = {\mathcal V}^* \pi(A)   {\mathcal W} 
  $$
  where
  $$
  {\mathcal W} x = [W_1x,\dots, W_Mx] \qquad{\rm and}\qquad W_i = \sum_{j=1}^M T_{i,j}V_j\ .
  $$
  Then $\Psi$ has the form specified in \eqref{arv21}
 \end{proof}

 We now turn to a question addressed by Arveson:  A CP map is {\em unital} in case $\Phi(\one) = \one$. Evidently the set of unital CP maps is  convex. An element $\Phi$ of this convex set is extremal in case whenver $\Psi$ is another unital CP map
 such that  for some $t\in (0,1)$, $t\Psi \leq \Phi$, then necessarily $\Psi =  \Phi$.  What are necessary and sufficient conditions for a unital CP map $\Phi$ to be extremal in the cone of unital CP maps?  
 
 Arveson's answer is stated in terms of minimal Stinespring representations and the commutant of $\pi({\mathcal M}_N(\C))$, where $\pi$ is the representation in the Stinespring representation. In our finite dimensional setting, there is a much simpler expression, due to Choi,  of this condition in terms of a minimal Kraus representation:
 
  \begin{thm}  Let $\Phi$ be a unital  CP map with a minimal Kraus representation  $\Phi(A) = \sum_{j=1}^M V_j^*A V_j$.  In order for $\Phi$ to be extremal in the cone of unital CP maps, it is necessary and sufficent that  the $M^2$ matrices
 \begin{equation}\label{lininset}
 \{\  V_i^*V_j\ :\ 1 \leq i,j \leq M \ \}
 \end{equation}
 are linearly independent.
 \end{thm}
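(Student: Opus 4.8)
The plan is to derive this from Arveson's Radon--Nikodym Theorem (Theorem~\ref{arveRN}), which identifies the CP maps $\Psi$ with $\Phi\geq\Psi$ precisely with expressions $\sum_{i,j}T_{i,j}V_i^*AV_j$, $0\leq T\leq\one$, $T$ unique because the given Kraus representation is minimal. The one structural remark that drives everything is that unitality of $\Phi$ reads $\sum_{j=1}^M V_j^*V_j=\one$, so that for a dominated map $\Psi$ with coefficient matrix $T$, the condition $\Psi(\one)=\one$ becomes the \emph{linear} equation $\sum_{i,j}(T_{i,j}-t\delta_{i,j})V_i^*V_j=0$ in $T$. Extremality of $\Phi$ is then exactly rigidity of this equation, i.e.\ linear independence of $\{V_i^*V_j\}$.

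For sufficiency, I would assume $\{V_i^*V_j:1\leq i,j\leq M\}$ is linearly independent and take a unital CP map $\Psi$ with $t\Psi\leq\Phi$ for some $t\in(0,1)$. By Theorem~\ref{arveRN} there is a unique $T$, $0\leq T\leq\one$, with $t\Psi(A)=\sum_{i,j}T_{i,j}V_i^*AV_j$. Evaluating at $A=\one$ and subtracting $t\Phi(\one)=t\one=\sum_{i,j}t\delta_{i,j}V_i^*V_j$ gives $\sum_{i,j}(T_{i,j}-t\delta_{i,j})V_i^*V_j=0$, hence $T=t\one$ by linear independence, so $\Psi=\Phi$; thus $\Phi$ is extremal.

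For necessity, I would show the contrapositive: if $\{V_i^*V_j\}$ is linearly dependent, pick a nonzero $M\times M$ matrix $B$ with $\sum_{i,j}B_{i,j}V_i^*V_j=0$. Taking the adjoint of this identity and using that $\{V_j\}$ is linearly independent (minimality), one may replace $B$ by $\tfrac12(B+B^*)$ or $\tfrac1{2i}(B-B^*)$, so WLOG $B=B^*$, $B\neq0$; rescaling, $\|B\|\leq1$. Then $T_\pm:=\one\pm B$ satisfy $0\leq T_\pm\leq2\one$, so by Lemma~\ref{cpchar2} the maps $\Psi_\pm(A):=\sum_{i,j}(\delta_{i,j}\pm B_{i,j})V_i^*AV_j$ are CP; moreover $\Psi_\pm(\one)=\sum_j V_j^*V_j\pm\sum_{i,j}B_{i,j}V_i^*V_j=\one$, so they are unital. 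Since $\Phi=\tfrac12\Psi_++\tfrac12\Psi_-$, since $\Phi-\tfrac12\Psi_+=\tfrac12\Psi_-$ is CP, and since $\Psi_+\neq\Psi_-$ (their coefficient matrices $T_\pm$ differ, so the maps differ by the uniqueness clause of Lemma~\ref{cpchar2}, forcing $\Psi_+\neq\Phi$), the map $\Phi$ is not extremal.

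The proof is largely bookkeeping once Theorem~\ref{arveRN} and Lemma~\ref{cpchar2} are available; the only steps needing a little care are the reduction to self-adjoint $B$ in the necessity direction and confirming $\Psi_+\neq\Psi_-$, both of which hinge on the bijection between a CP map with linearly independent Kraus operators and its coefficient matrix.
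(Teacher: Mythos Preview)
Your proof is correct and follows essentially the same route as the paper: both directions are deduced from Theorem~\ref{arveRN} and Lemma~\ref{cpchar2}, with sufficiency handled identically and necessity via the same perturbation $T=\one\pm B$ (the paper uses only one of $\Psi_\pm$ and argues directly rather than by contrapositive, but this is cosmetic). One small remark: your reduction to self-adjoint $B$ does not actually require linear independence of $\{V_j\}$; it follows simply because the map $B\mapsto\sum_{i,j}B_{i,j}V_i^*V_j$ intertwines adjoints, so its kernel is $*$-closed.
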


 \begin{remark} Choi \cite[Theorem 5]{Choi75} gave an elementary proof of this result that bypasses the use of Arveson's Radon-Nikodym Theorem. We give a very short matricial rendering of Arveson's original proof \cite[Theorem 1.4.6]{Arv69} using Theorem~\ref{arveRN}.  It is worth noting, as Arveson did,  that the proof applies, and yields the same result, if applied to the class of CP maps for which $\Phi(\one) = K$, for any fixed $0 \leq K \leq \one$. 
 \end{remark}

\begin{proof} Suppose first that the set in \eqref{lininset} is linearly independent.  Let  $\Psi$  be CP and unital with $t\Psi \leq \Phi$ for some $0 < t < 1$.  Then by Theorem~\ref{arveRN}, there is an $M\times M$ matrix $T$ with $0 \leq T \leq \one$ such that 
$$t\Psi (A) = \sum_{i,j=1}^M T_{i,j} V_i^* A V_j\ .$$
Taking $A= \one$, we get
${\displaystyle t\one = \sum_{i,j=1}^M T_{i,j} V_i^* V_j}$, and then since $t\one =  t\Phi(\one) = \sum_{j=1}^M tV_j^* V_j$,
we have  $\sum_{i,j=1}^M B_{i,j} V_i^* V_j = 0$  where $B_{i,j}  = t\delta_{i,j} - T_{i,j}$. By the linear independence, $B_{i,j} =0$ for each $i,j$, and hence 
$T = t\one$. Thus $t\Psi = t\Phi$, and so $\Phi$ is extreme. 

For the converse, suppose that $\Phi$ is extreme. The set in \eqref{lininset} is linearly independent if and only if the map $B \mapsto \sum_{i,j} B_{i,j}V_i^*V_j$  is injective. Because this map  is Hermitian, to show that it is injective, it suffices to show that  it is injective on the self-adjoint $M\times M$ matrices.  Therefore, consider a self-adjoint $B$ such that  $\sum_{i,j} B_{i,j}V_i^*V_j= 0$.  Replacing $B$ with a positive multiple of itself, we may freely assume that $\|B\| \leq 1$. Define $T = \tfrac12(\one + B)$,
and then define $\Psi$ by $\Psi(A) =  \sum_{i,j=1}^M T_{i,j} V_i^* V_j$.   $\Psi$ is $CP$ with $\Psi \leq \Phi$ by Theorem~\ref{arveRN}, and $\Psi(\one) = \tfrac12\one$ since $\sum_{i,j} B_{i,j}V_i^*V_j= 0$.  Therefore, defining $\widetilde\Psi = 2\Psi$, we have that $\widetilde \Psi$ is unital, CP, and $\frac12 \widetilde\Psi \leq \Phi$. Since $\Phi$ is extreme, $\widetilde\Phi = \Phi$, and hence $2T = \one$ and $B = 0$. 
\end{proof}

 \section{Acknowledgements} The work of both authors was partially supported by NSF grants DMS 1501007 and DMS 1764254.

\end{document}